\newtheorem{theorem}{Theorem}
\newtheorem{lemma}{Lemma}
\newtheorem{proposition}{Proposition}
\newtheorem{corollary}{Corollary}
\newtheorem{definition}{Definition}
\newtheorem{remark}{Remark}
\numberwithin{lemma}{section}
\numberwithin{proposition}{section}
\numberwithin{theorem}{section}
\numberwithin{corollary}{section}
\title[Scattering operators for ACHE metrics of Bergman type]{On the scattering operators for ACHE metrics of Bergman type on strictly pseudoconvex domains }
\author{Fang Wang}
\address{Shanghai Jiao Tong University, 800 Dongchuan Rd, Shanghai 200240, China.}
\email{fangwang1984@sjtu.edu.cn}
\date{January 6, 2016}
\thanks{The author was supported by Shanghai Pujiang Program No. 14PJ1405400}
\begin{document}
\maketitle

\begin{abstract}
The scattering operators associated to an ACHE metric of Bergman type on a strictly pseudoconvex domain are a one-parameter family of CR-conformally invariant pseudo-differential operators  of Heisenberg class with respect to the induced CR structure on the boundary. In this paper, we mainly show that if the boundary Webster scalar curvature is positive, then for $\gamma\in(0,1)$ the renormalised scattering operator $P_{2\gamma}$ has positive spectrum and satisfies the maximum principal; moreover,  the fractional curvature $Q_{2\gamma}$ is also positive. This is parallel to the result of Guillarmou-Qing \cite{GQ} for the real case. 
We also give two energy extension formulae for $P_{2\gamma}$, which are parallel to the energy extension given by Chang-Case \cite{CC} for the real case. 
\end{abstract}
\maketitle

\section{Introduction}

The scattering operators associated to the Laplacian operator for a \textit{real asymptotically hyperbolic}  manifold have been extensively studied, see for example \cite{Me1} \cite{GZ} \cite{JS} \cite{GQ} \cite{CC} and the references cited there. The purpose of this paper is to extend some results in \cite{GQ} and \cite{CC} to \textit{asymptotically complex hyperbolic} (ACH) manifolds. Similar as in \cite{GQ} and \cite{CC}, the author here is particularly interested in (approximate) \textit{asymptotically complex hyperbolic Einstein} (ACHE) manifolds with infinity of positive CR-Yamabe type.

Before discussing the asymptotically complex hyperbolic manifolds, let us first recall some results for real asymptotically hyperbolic manifolds, which should help us to understand the complex case. Suppose $X$ is a manifold with boundary of dimension $n+1$ and  $\rho$ is a smooth boundary defining function. Let $g$ be a  smooth metric in the interior $\mathring{X}$ such that $\bar{g}=\rho^2g$ is smooth and nondegenerate up to the boundary, satisfying $|d\rho|^2_{\bar{g}}\rightarrow 1$ when $\rho\rightarrow 0$. Then in a collar neighbourhood of $M$, denoted by $[0,\epsilon)_{\rho}\times M$, 
$$
 \quad \bar{g}=d\rho^2+ g_{\rho}
$$
where $g_{\rho}$ a smooth-one parameter family of Riemannian metric on the boundary $M$. Then $g$ is asymptotically hyperbolic in the sense that all the sectional curvature has limit $-1$ when approaching to the boundary. The metric $g$ induces a conformal class $[g_0]$ on the boundary by choosing different boundary defining functions. 
A standard example is the ball model of real hyperbolic space $\mathbb{H}^{n+1}$, i.e. the unit ball $\mathbb{B}^{n+1}\subset \mathbb{R}_z^{n+1}$ equipped  with metric 
$$h=4(1-|z|^2)^{-2}dz^2.$$ 

The spectrum and resolvent for the Laplacian operator $\triangle_g$ for $(X, g)$ were studied by Mazzeo-Melrose \cite{MM}, Mazzeo \cite{Ma} and Guillarmou \cite{Gu1}. The spectrum of $\triangle_g$ consists of two disjoint parts,  the absolute continuous spectrum $\sigma_{ac}(\triangle_g)$ and the pure point spectrum $\sigma_{pp}(\triangle_g)$  (i.e. $L^2$-eigenvalues) . More explicitly, 
$$
\sigma_{ac}(\triangle_g)=\left[n^2/4, \infty\right), \quad \sigma_{pp}(\triangle_g)\subset \left(0,n^2/4\right). 
$$
The resolvent $R(\lambda)=(\triangle_g-\lambda(n-\lambda))^{-1}$ is a bounded operator on $L^2(X, \mathrm{dvol}_g)$ for $\lambda\in\mathbb{C}$,  $\mathrm{Re}(\lambda)>\frac{n}{2}$, $\lambda(n-\lambda)\notin\sigma_{pp}(\triangle_g)$, which has finite meromorphic extension to $\mathbb{C}\backslash \{\frac{n-1}{2}-K-\mathbb{N}_0\}$, where $2K$ is the order up to which the Taylor expansion of $g_{\rho}$ is even. For example, if  $g_{\rho}$ has complete even taylor expansion at $\rho=0$, then $R(\lambda)$ has finite meromorphic extension to entire $\mathbb{C}$. If $g$ is Einstein (i.e. $g$ is Poincar\'{e}-Einstein), then for $n$ odd,  the Taylor expansion of $g_{\rho}$ is even up to order $n-1$ while choosing $\rho$ to be the geodesic normal defining function. In this case, $R(\lambda)$ has meromorphic extension  to $\mathbb{C}\backslash (-\mathbb{N}_0)$. 
For $n$ even, a Poincar\'{e}-Einstein metric $g$ has logarithmic terms in the asymptotic expansion of $g_{\rho}$. This makes $\bar{g}=\rho^2g$ not smooth up to the boundary.  However,  the analysis of spectrum and resolvent above is still valid except that the mapping property of $R(\lambda)$ changes a bit. See \cite{CDLS} for the asymptotic expansion of Poincar\'{e}-Einstein metric and \cite{Gu2} for more details on meromorphic extension of the resolvent. In particular, Lee \cite{Le1} showed that for Poincar\'{e}-Einstein metric, if the conformal infinity is of nonnegative Yamabe type, then there is no $L^2$-eigenvalue and hence $\sigma(\triangle_g)=\sigma_{ac}(\triangle_g)=[n^2/4,\infty)$. 

The scattering operators associated to $\triangle_g$ are defined as follows: for any $f\in C^{\infty}(M)$ and $\mathrm{Re}(\lambda)>\frac{n}{2}$, $\lambda(n-\lambda)\notin\sigma_{pp}(\triangle_g)$, $2\lambda-n\notin\mathbb{N}$, there exists a unique solution to the following equation 
$$
\triangle_gu-\lambda(n-\lambda)u=0,
$$
such that
$$
u=x^{n-\lambda}F+x^{\lambda}G, \quad F, G\in C^{\infty}(X), \ F|_{M}=f.
$$
Then the scattering operator is defined by 
$$
S(\lambda)f=G|{M}. 
$$
Here $S(\lambda)$ is an elliptic pseudodifferential operator of order $2\lambda-n$, which is conformally covariant on the boundary. Moreover, $S(\lambda)$ can be extended meromorphically to $\mathbb{C}\backslash \{\frac{n-1}{2}-K-\mathbb{N}_0\}$, where $K$ is the same as above. The poles  at $\lambda_0>\frac{n}{2}$, $\lambda_0(n-\lambda_0)\in\sigma_{pp}(\triangle_g)$ or $2\lambda_0-n\in\mathbb{N}$, are of first order. For simplicity we define the renormalised scattering operators by
$$
P_{2\alpha}=2^{2\alpha}\frac{\Gamma(\alpha)}{\Gamma(-\alpha)}S\left(\frac{n}{2}+\alpha\right). 
$$
If  $g$ is \textit{approximate Einstein}, i.e.
$$
Ric_g+ng=\begin{cases}
\mathcal{O}(\rho^{\infty}) &\textrm{\ for $n$ odd},\\
\mathcal{O}(\rho^{n-2}) &\textrm{\ for $n$ even},
\end{cases}
$$
then $P_{2k}$ for $k=1,...,[\frac{n}{2}]$ are GJMS operators. In particular, $P_2$ is the Yamabe operator. See \cite{GZ} for more details. If the conformal infinity is of positive Yamabe type, the scatting operators are studied by Guillarmou and Qing \cite{GQ}. They showed that

\begin{theorem}[Guillarmou-Qing]
Let $(X,g)$ be a Poincar\'{e}-Einstein manifold of dimension $n+1>3$. The first scattering pole is less than $\frac{n}{2}-1$ if and only if its conformal infinity $(M,[g_0])$ is of positive Yamabe type. Moreover, in this case, for all $\alpha\in(0,1]$, $P_{2\alpha}$ satisfies 
\begin{itemize}
\item[(a)] the first eigenvalue is positive;
\item[(b)] $Q_{2\alpha}=P_{2\alpha}1$ is positive if choosing  $g_0$ with positive scalar curvature;
\item[(c)] the first eigenspace is generated by a single positive function;
\item[(d)] its Green function is nonnegative.
\end{itemize}
\end{theorem}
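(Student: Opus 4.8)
The plan is to separate the spectral equivalence from the four positivity statements, and to route all of the latter through a single extension/energy principle.

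\textbf{The equivalence.} First I would make the dictionary between scattering poles and the conformal geometry of the infinity precise. By the meromorphic structure recalled above, the renormalisation $P_{2k}$ identifies $P_2$ with the residue of $S(\lambda)$ at $\lambda=\frac{n}{2}+1$, and $P_2=\Delta_{g_0}+\frac{n-2}{4(n-1)}R_{g_0}$ is the conformal Laplacian of $(M,[g_0])$, whose first eigenvalue carries the sign of the Yamabe invariant. Using the functional equation $S(\lambda)S(n-\lambda)=\mathrm{Id}$, a pole of $S$ at $\lambda_0\in[\tfrac{n}{2}-1,\tfrac{n}{2})$ reflects to a degeneracy of $S$ at $n-\lambda_0\in(\tfrac{n}{2},\tfrac{n}{2}+1]$, the borderline $\lambda_0=\frac{n}{2}-1$ corresponding exactly to a zero eigenvalue of $P_2$. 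Thus the first scattering pole lies strictly below $\frac{n}{2}-1$ precisely when the first eigenvalue of $P_2$ is positive, i.e. when $(M,[g_0])$ is of positive Yamabe type. For consistency I would invoke Lee's theorem: under nonnegative Yamabe type there are no $L^2$-eigenvalues, so $\inf\sigma(\Delta_g)=\frac{n^2}{4}$ and the only poles in the strip are resonances, whose first location is governed by $P_2$ as above.

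\textbf{The extension principle.} For $\alpha\in(0,1)$ I would realise $P_{2\alpha}$ as a Dirichlet-to-Neumann operator: given $f\in C^\infty(M)$ and $s=\frac{n}{2}+\alpha$, solve $\Delta_g u - s(n-s)u=0$ with $u=\rho^{n-s}F+\rho^{s}G$, $F|_M=f$, so that $P_{2\alpha}f=c_\alpha\,G|_M$ with $c_\alpha=2^{2\alpha}\Gamma(\alpha)/\Gamma(-\alpha)$, the renormalisation being chosen so that the principal part of $P_{2\alpha}$ is the positive operator $(-\Delta_{g_0})^{\alpha}$. Integrating by parts and tracking the boundary contributions of the $\rho^{n-s}$ and $\rho^{s}$ expansions, I would derive, after the Hadamard renormalisation of the divergent integral, the energy identity
\begin{equation*}
\langle P_{2\alpha}f,f\rangle_{L^2(M)}=d_\alpha\inf_{U|_M=f}\int_X\big(|dU|_g^2-s(n-s)\,U^2\big)\,\mathrm{dvol}_g,
\end{equation*}
with $d_\alpha>0$ and the infimum attained by the scattering solution $u$. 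This reduces every positivity question to the behaviour of $\Delta_g-s(n-s)$.

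\textbf{Positivity (a)--(d).} Statement (a) is then immediate: positive Yamabe type gives $\inf\sigma(\Delta_g)=\frac{n^2}{4}>\frac{n^2}{4}-\alpha^2=s(n-s)$ for every $\alpha\in(0,1]$, so the renormalised Dirichlet form is positive definite and the first eigenvalue of $P_{2\alpha}$ is positive. For (d) I would write the Green function of $P_{2\alpha}$ as the boundary restriction of the Poisson operator of $\Delta_g$ at spectral parameter $s(n-s)$; since $s(n-s)$ lies strictly below the bottom of the spectrum, $\Delta_g-s(n-s)$ obeys the maximum principle, its Poisson kernel is nonnegative, and the sign of $d_\alpha$ propagates to a nonnegative Green kernel for $P_{2\alpha}$. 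Statement (c) follows from (d) by a Krein--Rutman/Perron--Frobenius argument: the resolvent is positivity-preserving and compact, so the ground state may be taken nonnegative, and the strong maximum principle upgrades it to a strictly positive, simple eigenfunction. Finally (b) is the case $f\equiv 1$: choosing $g_0$ with $R_{g_0}>0$ forces the extension $u$ of the constant to be strictly positive in the interior, and a Hopf-lemma analysis of its $\rho^{s}$-coefficient $G$ gives $Q_{2\alpha}=P_{2\alpha}1=c_\alpha G|_M>0$.

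\textbf{Main obstacle.} I expect the crux to be (d), the nonnegativity of the Poisson/Green kernel. This rests on a maximum principle for the degenerate elliptic operator $\Delta_g-s(n-s)$ up to the conformal infinity, available only because the scattering-pole hypothesis places $s(n-s)$ strictly below $\inf\sigma(\Delta_g)$. Controlling the renormalised boundary terms and the weighted decay of $u$ so that this spectral gap actually produces a definite sign on the kernel is the delicate point, and it is precisely here that the positive Yamabe hypothesis is indispensable.
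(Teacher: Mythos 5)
First, a point of orientation: the paper does not prove this theorem at all --- it is quoted verbatim from Guillarmou--Qing \cite{GQ} as motivation, and what the paper actually proves is the complex analogue (Theorem \ref{thm.1}) by a two-pronged strategy that your proposal partially mirrors: explicit barrier functions plus the maximum principle for the interior operator (Section 3), and a renormalised energy identity in the compactified metric (Section 4). Judged against that strategy, your architecture is reasonable, but two steps as written would fail.

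The first is the energy identity. The functional $\int_X(|dU|_g^2-s(n-s)U^2)\,\mathrm{dvol}_g$ over extensions $U$ with $U|_M=f$ is divergent for every admissible $U$ (infinite volume, no decay), and its ``renormalisation'' is not obtained by subtracting a constant: one must substitute $u=\rho^{n-s}U$ and pass to the compactified metric, after which the identity reads (as in Proposition \ref{prop.4.1} here, or Case--Chang in the real case) $d_\alpha\langle P_{2\alpha}f,f\rangle=\int_X\rho^{1-2\alpha}\bigl(|\nabla U|^2_{\bar g}+(n-s)^2U^2+\cdots\bigr)\mathrm{dvol}_{\bar g}$, with a \emph{positive} zeroth-order coefficient and a curvature term whose sign is exactly where the geometric hypothesis enters. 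Your version, with the coefficient $-s(n-s)$, cannot yield positivity by inspection, and the appeal to $\inf\sigma(\Delta_g)=n^2/4>s(n-s)$ does not rescue it, because the scattering solution is not in $L^2$ and the spectral lower bound does not control the renormalised form without the boundary-term bookkeeping you are eliding.

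The second and more serious gap is (b), which is the only place the choice of a representative $g_0$ with $R_{g_0}>0$ can enter. Positivity of the interior solution $u$ with data $1$ follows from the maximum principle for any sign of the curvature, so it gives no information about the second coefficient $G|_M$; a ``Hopf-lemma analysis'' of $G$ is not a proof. The mechanism, both in \cite{GQ} and in this paper's Proposition \ref{prop.3.1}, is a comparison with the explicit global supersolution $v_s=\rho^{n-s}$: one computes $\Delta_g v_s-s(n-s)v_s$ and shows it is nonnegative \emph{globally}, which requires the Einstein equation together with a Lee-type lemma (the analogue here is Lemma \ref{lem.3.2}, proved by a Bochner computation and the strong maximum principle); then $u\le v_s$ forces the coefficient of $\rho^{2\alpha}$ in the expansion of $u/\rho^{n-s}$ to be $\le 0$, and the order-$\rho$ coefficient (proportional to the boundary scalar curvature) upgrades this to strict inequality. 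Without this barrier construction you have no route from $R_{g_0}>0$ to $Q_{2\alpha}>0$, and the same comparison argument is what delivers the maximum principle for $P_{2\alpha}$ and hence (c) and (d); your Krein--Rutman route for (c) presupposes the positivity-preserving property that is the content of (d), so it cannot substitute for it.
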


A new interpretation of the renormalised scattering operator $P_{2\alpha}$ is given by Case-Chang in \cite{CC} as a generalised Dirichlet-to-Neumann map on naturally associated smooth metric measure spaces. The authors exhibited some energy identities for $P_{2\alpha}$ on the boundary in terms of energies in the compact space $(X, \bar{g})$. This connects the positivity of renormalised scattering operators to the positivity of curvature terms  for the compactified metric $\bar{g}$. In particular, while a Poincar\'{e}-Einstein metric has positive conformal infinity, Qing \cite{Qi} showed that there exists a suitable compactification such that $\bar{g}$ has positive scalar metric, which implies that $P_{2\alpha}$ has positive spectrum for $\alpha\in(0,1)$ from Case-Chang's energy identity.

\vspace{0.2in}

In this paper, we consider  a complex manifold $X$ of complex dimension $n+1$,  with  strictly pseudoconvex boundary $M=\partial X$. The complex structure on $X$ naturally induces a CR-structure $(H,J)$ on M, where $H=\mathrm{Re} \mathcal{H}$,  $ \mathcal{H}=T_{1.0}X\cap \mathbb{C}TM$ and $J:H\longrightarrow H$ is defined by $J(V+\overline{V})=\sqrt{-1}(V-\overline{V})$.  Let $\rho$ be a smooth boundary defining function such that $\rho<0$ on $\mathring{X}$. Assume the function $-\log(-\rho)$ is plurisubharmonic. We consider the K\"{a}hler metric $g$ induced by K\"{a}hler form 
$$
\omega=-\frac{\sqrt{-1}}{2}\partial\overline{\partial} \log(-\rho).
$$
The metric  $g$ is asymptotically complex hyperbolic in the sense that the holomorphic sectional curvature has limit $-4$ when approaching to the boundary. More explicitly, $g$ takes the form
$$
g=\frac{h_{\rho}}{-\rho}+(1-r\rho)\left(\frac{(d\rho)^2}{4\rho^2}+\frac{(\theta_{\rho})^2}{\rho^2}\right)
$$
near $M$, where $h_{\rho}$ and $\theta_{\rho}$ have Taylor series in $\rho$ at $\rho=0$. In particular, $\theta_0$ gives the contact form on $M$ and $h_0$ induces a pseudo-Hermitian metric  on $H$. Moreover, the conformal class of the boundary pseudo-Hermitian structure is independent of choice of boundary defining function. A standard example is the complex hyperbolic space $\mathbb{H}_{\mathbb{C}}^{n+1}$: it  is the ball $\mathbb{B}^{n+1}=\{z\in \mathbb{C}^{n+1}: |z|<1\}$ equipped with K\"{a}hler metric $h^{\mathbb{C}}$ induced from the boundary defining function $\rho=|z|^2-1$.

The spectrum and resolvent of the Laplacian operator  $\triangle_g=\frac{1}{4}\delta d$ were studied by Epstein-Melrose-Mendoza \cite{EMM} and Vash-Wunsch \cite{VW}. Actually in both papers, the authors studied more general ACH manifolds.  Similar like the real case, the spectrum of $\triangle_g$ consists of two disjoint parts: the absolute continuous spectrum $\sigma_{ac}(\triangle_g)$ and the pure point spectrum $\sigma_{pp}(\triangle_g)$. More explicitly, 
$$
\sigma_{ac}(\triangle_g)=\left[(n+1)^2/4, \infty\right), \quad
\sigma_{pp}(\triangle_g)\subset \left(0, (n+1)^2/4\right). 
$$
The resolvent $R(s)=(\triangle_g-s(n+1-s))^{-1}$ is a bounded operator on $L^2(X,\mathrm{dvol}_g)$ for $s\in\mathbb{C}$, $\mathrm{Re}(s)>\frac{n+1}{2}$ and has finite meromorphic extension to $\mathbb{C}\backslash ( -\mathbb{N}_0)$. The smoothness of $\rho$ implies that the metric has even asymptotic expansion in the sense of \cite{EMM}. If $g$ is K\"{a}hler-Einstein, or equivalently if $\rho$ is a solution to the complex Monge-Amp\'{e}re equation,  then generally speaking $\rho$ is not smooth up to boundary and it has logarithmic terms in the taylor expansion at boundary with respect to smooth coordinates. See \cite{LM} for more details.  The analysis of spectrum and resolvent is still valid except that the mapping property of  $R(s)$ changes a bit. 
 
 The scattering operators associated to $\triangle_g$ is defined in a similar way as the real case. For any $f\in C^{\infty}(M)$ and $s\in\mathbb{C}$,  $\mathrm{Re}(s)>\frac{n+1}{2}$, $s(n+1-s)\notin\sigma_{pp}(\triangle_g)$ and $2s-(n+1)\notin \mathbb{N}$, consider the equation
 $$
 \triangle_gu -s(n+1-s)u=0.
 $$
There exists a unique solution $u$ such that
$$
u=\rho^{n+1-s}F+\rho^{s} G, \quad F, G\in C^{\infty}(X), \ F|_{M}=f.
$$
Then the scattering operator $S(s)$ is defined by 
$$S(s)f=G|_{M}, $$ 
which is a pseudo-differential operator of Heisenberg class of order $2(2s-n-1)$, conformally covariant and having meromorphic extension to $\mathbb{C}\backslash (-\mathbb{N}_0)$. See \cite{EMM} \cite{GS} for more details on the scattering theory and \cite{BG} \cite {EM} \cite{Ge}\cite{Po}\cite{Ta} for the Heisenberg calculus. For simplicity, we also define the renormalised scattering operators as 
$$
P_{2\gamma}=2^{2\gamma}\frac{\Gamma(\gamma)}{\Gamma(-\gamma)}S\left(\frac{n+1+\gamma}{2}\right). 
$$
If $g$ is approximate Einstein (see Definition \ref{def.app}), 
then $P_{2k}$ for $k=1,..,n+1$ are CR-GJMS operators of order $2k$. In particular, $P_2$ is the CR-Yamabe operator. See \cite{HPT}. This gives a different approach to construct the CR-invariant powers of sub-Laplacian studied by Grover-Graham \cite{GG} as well as the Q-curvature by Fefferman-Hirachi \cite{FH}.

The first result of this paper is parallel to the positivity result given in  \cite{GQ}: 
Here the definition of  \textit{approximate ACHE metric} is the same as in \cite{HPT}. 
See Definition \ref{def.app} for details. 
\begin{theorem}\label{thm.1}
Suppose $X$ is a complex manifold of complex dimension $n+1$ with strictly pseudoconvex boundary $M=\partial X$ and $g$ is an (approximate) ACHE metric of Bergman type given by K\"{a}hler form $\omega=-\frac{\sqrt{-1}}{2}\partial\overline{\partial} \log (-\rho)$ for some boundary defining function $\rho$. 
Assume $Ric_g\geq -2(n+2)g$ and the induced boundary CR-structure $(M,J,\theta_0)$ has positive Webster scalar curvature. 
Then for $\gamma\in(0,1)$, 
\begin{itemize}
\item[(a)] $Q_{2\gamma}=P_{2\gamma}1>0$;
\item[(b)] 
for $f\in C^{\infty}(M)$, $P_{2\gamma}f>0$ implies $f>0$ and $P_{2\gamma}f\geq 0$ implies $f\geq 0$;
\item[(c)] the bottom spectrum of $P_{2\gamma}$ is positive; more explicitly
$$
\oint_M fP_{2\gamma}f \theta\wedge(d\theta)^n \geq C_{\gamma}\oint_M |f|^2 \theta\wedge(d\theta)^n, 
$$
where $C_{\gamma}=\min_M Q_{2\gamma}>0$ on $M$. 
\end{itemize}
\end{theorem}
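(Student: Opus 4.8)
The plan is to transport the Guillarmou--Qing strategy \cite{GQ} to the Heisenberg setting, reducing all three assertions to two analytic inputs: positivity of the resolvent kernel (which encodes a maximum principle) and pointwise positivity of the fractional curvature $Q_{2\gamma}$. Throughout set $s=\frac{n+1+\gamma}{2}$ and $\lambda=s(n+1-s)=\frac{(n+1)^2-\gamma^2}{4}$, so that $\lambda<\frac{(n+1)^2}{4}=\inf\sigma_{ac}(\triangle_g)$ for $\gamma\in(0,1)$. First I would establish a spectral gap: I would show that the curvature hypotheses rule out $L^2$-eigenvalues below $\frac{(n+1)^2}{4}$, so that $\inf\sigma(\triangle_g)=\frac{(n+1)^2}{4}$, paralleling Lee's theorem \cite{Le1} for the real case. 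Concretely, for a putative eigenfunction one integrates by parts and bounds the Rayleigh quotient from below using the Ricci lower bound $Ric_g\geq-2(n+2)g$ through a Bochner identity in the bulk, together with positivity of the Webster scalar curvature of $(M,J,\theta_0)$ controlling the boundary contribution. Once $\lambda<\inf\sigma(\triangle_g)$, the resolvent $R(s)=(\triangle_g-\lambda)^{-1}=\int_0^\infty e^{\lambda t}e^{-t\triangle_g}\,dt$ is holomorphic and, being the Laplace transform of the positivity-preserving heat semigroup, has nonnegative Schwartz kernel.

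Next I would convert kernel positivity of $R(s)$ into a structural formula for $P_{2\gamma}$. Taking the appropriate boundary limit of the resolvent kernel produces the off-diagonal Schwartz kernel of $S(s)$, which is therefore nonnegative; since $P_{2\gamma}=2^{2\gamma}\frac{\Gamma(\gamma)}{\Gamma(-\gamma)}S(s)$ and $\frac{\Gamma(\gamma)}{\Gamma(-\gamma)}<0$ for $\gamma\in(0,1)$, the off-diagonal kernel of $P_{2\gamma}$ is nonpositive. Hence $P_{2\gamma}$ admits a representation
$$P_{2\gamma}f(x)=Q_{2\gamma}(x)f(x)+\mathrm{P.V.}\!\int_M\big(f(x)-f(y)\big)K(x,y)\,\theta\wedge(d\theta)^n(y),$$
with $K\geq0$ symmetric and $P_{2\gamma}1=Q_{2\gamma}$.

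Granting $Q_{2\gamma}>0$, assertions (b) and (c) become formal. At an interior minimum $x_0$ of $f$ the integral is $\leq0$, so $P_{2\gamma}f(x_0)\leq Q_{2\gamma}(x_0)f(x_0)$, and $P_{2\gamma}f\geq0$ with $Q_{2\gamma}(x_0)>0$ forces $f(x_0)\geq0$, the strong maximum principle upgrading this to the strict statement $P_{2\gamma}f>0\Rightarrow f>0$; while
$$\oint_M fP_{2\gamma}f\,\theta\wedge(d\theta)^n=\oint_M Q_{2\gamma}f^2\,\theta\wedge(d\theta)^n+\tfrac12\!\iint_{M\times M}\!\big(f(x)-f(y)\big)^2K\;\geq\;C_\gamma\oint_M f^2\,\theta\wedge(d\theta)^n$$
with $C_\gamma=\min_M Q_{2\gamma}$, which is exactly (c).

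It remains to prove (a), $Q_{2\gamma}=P_{2\gamma}1>0$, and this is where I expect the main obstacle. The sign of $Q_{2\gamma}$ is \emph{not} detected by bulk positivity of the scattering solution $u=\mathcal{P}(s)1$ alone, since it is carried by the subleading coefficient $G|_M=S(s)1$ in the expansion $u=(-\rho)^{n+1-s}F+(-\rho)^s G$. My plan is to extract this sign from the energy/extension identity for $P_{2\gamma}$ (the second main result of the paper): one writes $\oint_M 1\cdot P_{2\gamma}1$ as a weighted Dirichlet energy of $u$ in $X$ plus a curvature potential term, and the Ricci lower bound $Ric_g\geq-2(n+2)g$ is precisely the hypothesis that makes this potential nonnegative, yielding nonnegativity in the mean. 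To promote this to the pointwise statement $Q_{2\gamma}>0$ I would compare $u$ with the constant supersolution of $\triangle_g-\lambda$ and invoke a Hopf-type lemma adapted to the degenerate ACH boundary, using the limiting normalization $Q_2\sim$ Webster scalar curvature $>0$ at $\gamma=1$ to fix the sign and to rule out degeneracy as $\gamma$ ranges over $(0,1)$. The genuinely delicate points are thus the two analytic inputs---the absence of small $L^2$-eigenvalues under the curvature hypotheses, and the weighted Hopf lemma at the complex-hyperbolic boundary that signs $Q_{2\gamma}$---whereas the passage (a)$\Rightarrow$(b),(c) through the kernel representation is formal.
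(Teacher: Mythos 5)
Your architecture is sound and, for parts (b) and (c), genuinely different from the paper's: you reduce everything to nonnegativity of the off-diagonal Schwartz kernel of $S(s)$ (via positivity of the resolvent as a Laplace transform of the heat semigroup) plus pointwise positivity of $Q_{2\gamma}$, and then (b), (c) follow formally from the singular-integral representation. The paper instead proves (b) by a bulk comparison: it applies the maximum principle to $u/v_s$, where $v_s=(-\rho)^{n+1-s}$, and reads the contradiction off the boundary expansion $u=(-\rho)^{n+1-s}[f+(-\rho)^{\gamma}S(s)f+(-\rho)u_1+\cdots]$ at a nonpositive minimum of $f$; and it proves (c) by combining a Fredholm-plus-continuity argument with the energy identity $\oint fP_{2\gamma}f\geq\oint Q_{2\gamma}f^2$ coming from the extension formula of Theorem 1.2(b). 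Your kernel route would deliver the same inequality with $C_{\gamma}=\min_M Q_{2\gamma}$, at the cost of justifying the renormalized boundary limit of the resolvent kernel (with the correct sign of the constant) and the principal-value representation in the Heisenberg calculus for order $2\gamma\in(0,2)$ — nontrivial but plausible inputs from Guillarmou--S\'a Barreto.

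The genuine gap is in your treatment of the two analytic inputs on which everything else rests. First, the constant function is a \emph{sub}solution of $\triangle_{\phi}-s(n+1-s)$ (it gives $-s(n+1-s)<0$), so "compare $u$ with the constant supersolution" cannot work; and the energy identity for $f\equiv 1$ only yields $\oint Q_{2\gamma}\geq 0$, an integrated statement, not the pointwise one. The missing idea — which is the actual engine of the paper — is the explicit strict supersolution $v_s=(-\rho)^{n+1-s}$, for which $\triangle_{\phi}v_s-s(n+1-s)v_s=(n+1-s)^2(-\rho)I\,v_s$ with $I=e^{\phi}(1-|d\phi|^2_{\phi})=(|d\rho|^2_{\rho}-\rho)^{-1}$. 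Proving $I>0$ in $\mathring{X}$ is exactly where the hypotheses enter: one computes $\triangle_{\phi}I\geq 0$ using $Ric_{\phi}\geq-2(n+2)g_{\phi}$ and Cauchy--Schwarz, and uses the Monge--Amp\`ere condition to get the boundary value $I|_M=R_{\theta}/(n(n+1))>0$, then applies the strong maximum principle (Lemmas 3.1--3.2 of the paper). This single supersolution simultaneously gives your spectral gap (a Barta-type argument: $v_s\in L^2$ for $s<\frac{n+1}{2}$ rules out eigenvalues below $(n+1)^2/4$, replacing your vague Bochner/Rayleigh-quotient sketch) and part (a): the maximum principle applied to $w/v_s$ yields $w\leq v_s$, and since the coefficient $w_1=\frac{1}{1-\gamma}\frac{(n+1-s)^2}{n(n+1)}R_{\theta}$ of the $(-\rho)$-term in the expansion of $w/v_s$ is strictly positive, the inequality forces the $(-\rho)^{\gamma}$-coefficient $S(s)1$ to be strictly negative at every boundary point, i.e.\ $Q_{2\gamma}>0$ pointwise. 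Without constructing this supersolution and establishing $I>0$, your "weighted Hopf lemma" has no comparison function to act on, and neither (a) nor the spectral gap is actually proved.
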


The second result  of this paper is parallel to the energy extension formulae given in \cite{CC}. In particular, while $(X, g)$ is the standard complex hyperbolic space $(\mathbb{H}^{n+1}_{\mathbb{C}}, h^{\mathbb{C}})$ and the CR-boundary is the Heisenberg group, Frank-Gonz\'{a}lez-Monticelli-Tan gave an answer to the extension problem in \cite{FGMT}. In the general case, the energy extension formulae is as follows: 

\begin{theorem}\label{thm.2}
Assume the same as in Theorem \ref{thm.1}. For $\gamma\in(0,1)$,  let $s=\frac{n+1+\gamma}{2}$ and $u$ solve the equation 
$\triangle_{g}u-s(n+1-s)u=0$ with Dirichlet data $f$. 
\begin{itemize}
\item[(a)]
Define $U$ by $u=(-\rho)^{n+1-s}U$. Then
$$
d_{\gamma}\oint_{M}fP_{2\gamma}f   \theta \wedge (d\theta)^n
=\int_X  (-\rho)^{-\gamma} \left\{\left[ (|d\rho|^2_{\rho}-\rho)\rho^{i\bar{j}}
-\rho^i\rho^{\bar{j}}\right] U_iU_{\bar{j}} + (n+1-s)^2U^2 \right\}\mathrm{dvol}_{\rho}\geq 0.
$$
\item[(b)] 
Define $W$ and $U'$ by $u=wU'=(\rho)^{n+1-s}WU'$ where $w$ solves the equation $\triangle_{\phi}w-s(n+1-s)w=0$ with Dirichlet data $1$.  Then
$$
d_{\gamma}\oint_{M}(fP_{2\gamma}f -Q_{2\gamma}f^2 ) \theta \wedge (d\theta)^n
=\int_X  (-\rho)^{-\gamma}W^2\left[ (|d\rho|^2_{\rho}-\rho)\rho^{i\bar{j}}
-\rho^i\rho^{\bar{j}}\right] U'_iU'_{\bar{j}} \mathrm{dvol}_{\rho}\geq 0. 
$$
\end{itemize}
Here 
$d_{\gamma}=-\gamma 2^{-2\gamma}\frac{\Gamma(-\gamma)}{\Gamma(\gamma)}>0$ and
$\mathrm{dvol}_{\rho}=
(\sqrt{-1})^{n+1}\det H(\rho) dz^1\wedge d\bar{z}^1\wedge \cdots \wedge dz^{n+1}\wedge d\bar{z}^{n+1}$. 
\end{theorem}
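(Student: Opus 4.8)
The plan is to run the curved Kähler analogue of the Caffarelli--Silvestre / Case--Chang extension argument, realising the scattering operator as a weighted Dirichlet-to-Neumann map and producing the energy identity by a single renormalised integration by parts. Before that I would record the algebra that makes both right-hand sides manifestly nonnegative. Writing $a=-\rho>0$ and $B=|d\rho|_\rho^2$, a Sherman--Morrison computation for the Kähler metric $g_{i\bar j}=\frac{1}{a}\rho_{i\bar j}+\frac{1}{a^2}\rho_i\rho_{\bar j}$ gives the inverse $g^{i\bar j}=a\,\rho^{i\bar j}-\frac{a}{a+B}\rho^i\rho^{\bar j}$, the identity $g^{i\bar j}\rho_i\rho_{\bar j}=\frac{a^2B}{a+B}$, and the volume ratio $\mathrm{dvol}_g=\frac{a+B}{a^{n+2}}\,\mathrm{dvol}_\rho$. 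In particular the quadratic form in the theorem is a positive multiple of the positive definite inverse metric, $(|d\rho|_\rho^2-\rho)\rho^{i\bar j}-\rho^i\rho^{\bar j}=\frac{a+B}{a}\,g^{i\bar j}$, so that $\big[(|d\rho|_\rho^2-\rho)\rho^{i\bar j}-\rho^i\rho^{\bar j}\big]U_iU_{\bar j}\geq 0$ for real $U$; together with $(n+1-s)^2U^2\geq 0$ and $W^2\geq 0$ this yields the two inequalities ``$\geq 0$'' once the identities are proved.

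For part (a) I would substitute $u=a^{\nu}U$ with $\nu=n+1-s$ (so $2\nu=n+1-\gamma$ and $s-\nu=\gamma$) into $\triangle_g u=s\nu\,u$, using that on this Kähler metric $\triangle_g$ carries no first-order part, $\triangle_g=-\tfrac12 g^{i\bar j}\partial_i\partial_{\bar j}$. Conjugating, $a^{-\nu}\triangle_g(a^{\nu}\,\cdot\,)$ yields the expected degenerate-elliptic extension operator whose drift term $a^{-1}g^{i\bar j}(a_iU_{\bar j}+a_{\bar j}U_i)$ combines with the volume ratio, via $\frac{a+B}{a}\,\mathrm{dvol}_\rho=a^{n+1}\mathrm{dvol}_g$, to put the equation in weighted divergence form; schematically, and up to the fixed normalisation of $\triangle_g$, $U$ satisfies
$$
\partial_{\bar j}\!\Big(a^{-\gamma}\big[(a+B)\rho^{i\bar j}-\rho^i\rho^{\bar j}\big]U_i\,\det H(\rho)\Big)+\text{c.c.}=a^{-\gamma}\,\nu^2\,U\,\det H(\rho),
$$
the weight $a^{-\gamma}=(-\rho)^{-\gamma}$ being forced by the exponent $2\nu-(n+2)=-1-\gamma$. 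Pairing this with $U$ and integrating over $X_\epsilon=\{-\rho\geq\epsilon\}$ gives exactly the bulk integrand of (a), $a^{-\gamma}\{\mathcal A^{i\bar j}U_iU_{\bar j}+\nu^2U^2\}$, up to the flux through $\{-\rho=\epsilon\}$; convergence of $\int_X(-\rho)^{-\gamma}(\cdots)$ for $\gamma\in(0,1)$ is guaranteed because the scattering expansion forces $U=f+O(a)+a^{\gamma}(\cdots)+\cdots$, whose worst weighted singularity is integrable.

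The heart of the argument, and the step I expect to be the main obstacle, is the evaluation of the boundary flux as $\epsilon\to0$. Here I would feed in the scattering expansion $u=\rho^{n+1-s}F+\rho^{s}G$ with $F|_M=f$ and $S(s)f=G|_M$: since the weight $a^{-\gamma}$ meets the normal derivative of $U$, the smooth $F$-self-interaction contributes only terms that vanish or cancel in the limit, while the $F$--$G$ cross term survives and produces $-\gamma\oint_M f\,S(s)f\,\theta\wedge(d\theta)^n$, the factor $\gamma$ arising from differentiating $a^{\gamma}$. Rewriting $S(s)$ through the normalisation $P_{2\gamma}=2^{2\gamma}\frac{\Gamma(\gamma)}{\Gamma(-\gamma)}S\!\big(\frac{n+1+\gamma}{2}\big)$ then turns this into $d_\gamma\oint_M fP_{2\gamma}f\,\theta\wedge(d\theta)^n$, since the Gamma-factors and powers of $2$ cancel and leave $-\gamma$. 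The delicate points are tracking the signs induced by $\rho<0$, confirming that no stray lower-order term enters the divergence form, and controlling the logarithmic terms present in the merely approximate-Einstein case so that they do not pollute the finite part.

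Finally, part (b) follows from (a) by the ground-state (Allegretto--Piepenbrink / $h$-transform) substitution. Writing $w=\rho^{\nu}W$ for the distinguished solution with Dirichlet data $1$, so that $Q_{2\gamma}=P_{2\gamma}1$ is read off from its expansion, and $U=WU'$ with $U'|_M=f$, I would expand $\big[(a+B)\rho^{i\bar j}-\rho^i\rho^{\bar j}\big](WU')_i(WU')_{\bar j}$. The terms quadratic in $W$ alone reproduce, via the identity of (a) applied to $w$ together with $U'|_M=f$, the quantity $d_\gamma\oint_M Q_{2\gamma}f^2\,\theta\wedge(d\theta)^n$; the cross terms integrate to zero after one integration by parts precisely because $W$ solves its weighted homogeneous Euler--Lagrange equation, so its first variation against $W(U')^2$ vanishes; and what remains is the asserted $\int_X(-\rho)^{-\gamma}W^2\big[(|d\rho|_\rho^2-\rho)\rho^{i\bar j}-\rho^i\rho^{\bar j}\big]U'_iU'_{\bar j}\,\mathrm{dvol}_\rho$, which is $\geq 0$ by the positivity established in the first paragraph.
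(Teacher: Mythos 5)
Your proposal is correct and follows essentially the same route as the paper: substitute $u=(-\rho)^{n+1-s}U$ (resp.\ $u=wU'$), put the resulting degenerate equation in weighted divergence form, integrate by parts against the weight $(-\rho)^{-\gamma}$, read off the boundary flux $-\gamma\oint fS(s)f=d_\gamma\oint fP_{2\gamma}f$ from the scattering expansion, and obtain positivity from the identity $(|d\rho|^2_\rho-\rho)\rho^{i\bar j}-\rho^i\rho^{\bar j}=(-\rho)^{-1}(|d\rho|^2_\rho-\rho)\phi^{i\bar j}$, which is exactly the paper's Lemmas 4.1--4.2 and Propositions 4.1--4.2. The steps you leave schematic (the exact divergence form and the ground-state cross-term cancellation) are precisely the ones the paper also dispatches as ``direct computation,'' so there is no substantive gap.
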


This paper is organised as follows. In Section 2, we introduce the geometric setting and previous results of the spectral and scattering theory in this background. In section 3, we talk about the case that the boundary CR-structure $(M,J,\theta)$ has positive (nonnegative) Webster scalar curvature and prove Part (a) and (b) of Theorem \ref{thm.1}. In Section 4, we prove the two energy extension formulae given in Theorem \ref{thm.2} and Part (c) of Theorem \ref{thm.1}.

\textbf{Acknowlegement}: The author would like to thank Paul Yang, Sun-Yung Alice Chang, Jefferey Case and Richard B. Melrose  for helpful conversation while this paper is prepared.

\vspace{0.2in}
\section{Geometric Setting}
Suppose $X$ is  a compact manifold of complex dimension $n+1$, with boundary $M=\partial X$. Denote by $\mathring{X}$ the interior of $X$. Let $\rho\in C^{\infty}(X)$ be a \textit{boundary defining function}, i.e. $\rho<0$ in $\mathring{X}$, $\rho=0$ on $M$ and 
$d\rho(p)\neq 0$ for all $p\in M$.  

We make two assumptions on $X$ and $\rho$ throughout the following paper. 
\begin{itemize}
\item[\textbf{(A1)}] The boundary $M$ is smooth and strictly pseudoconvex, i.e. there exists some boundary defining function $\tilde{\rho}\in C^{\infty}(X)$ such that $\tilde{\rho}$ is strictly plurisubharmonic near $M$. 
\item[\textbf{(A2)}] The function $\phi=-\log(-\rho)$ is  strictly plurisubharmonic all over $\mathring{X}$.
\end{itemize}
%

\subsection{ACH K\"{a}hler manifold }\label{sec.ach}

The \textit{K\"{a}hler form} associated to $\phi=-\log(-\rho)$ is defined by
\begin{equation}\label{kahlerform}
\omega_{\phi} =\frac{\sqrt{-1}}{2}  \partial \overline{\partial} \phi=-\frac{\sqrt{-1}}{2} \partial \overline{\partial} \log(-\rho)
=\frac{\sqrt{-1}}{2} \left(\frac{\partial \overline{\partial} \rho}{-\rho}+\frac{\partial \rho\wedge \overline{\partial}\rho}{\rho^2}\right). 
\end{equation}
By assumption (A2), $\omega_{\phi}$ is a positive real $(1,1)$-form and hence induces a Riemannian (K\"{a}hler) metric $g_{\phi}$:
\begin{equation}\label{kahlermetric}
g_{\phi}(V,W)=\omega_{\phi}(V,JW), \quad \forall\ V, W\in T\mathring{X};
\end{equation}
and a Hermitian metric $h_{\phi}$: 
\begin{equation}\label{hermitianmetric}
h_{\phi}(V,\overline{W})= -2\sqrt{-1}\omega_{\phi}(V, \overline{W}),  \quad \forall\  V, W\in T_{1,0}\mathring{X}. 
\end{equation}
%
In local holomorphic coordinates $\{z^{i}: i=1,...,n+1\}$,
\begin{equation}
\begin{aligned}
\omega_{\phi}=\frac{\sqrt{-1}}{2}\phi_{i\bar{j}} dz^i\wedge d\bar{z}^j,
\quad g_{\phi}=\frac{1}{2}\phi_{i\bar{j}} dz^i\odot d\bar{z}^j,
\quad h_{\phi} =\phi_{i\bar{j}} dz^i\otimes d\bar{z}^j. 
\end{aligned}
\end{equation}
Notice that in our convention the K\"{a}hler metric has components $g_{i\bar{j}}=\frac{1}{2}\phi_{i\bar{j}}$.  Denote by $H(\phi)=[\phi_{i\bar{j}}]$ and $H(\rho)=[\rho_{i\bar{j}}]$ the Hessians of $\phi$ and $\rho$. Then
\begin{equation*}
\phi_{i\bar{j}}=\frac{\rho_{i\bar{j}}}{-\rho}+\frac{\rho_{i}\rho_{\bar{j}}}{\rho^2}, \quad
\rho_{i\bar{j}}=(-\rho)\left(\phi_{i\bar{j}}-\phi_i\phi_{\bar{j}}\right). 
\end{equation*}
Assumption (A2) says  $H(\phi)$ is positive definite in $\mathring{X}$ and hence invertible. Let $[\phi^{i\bar{j}}]$ be the inverse of $H(\phi)$ and denote
\begin{equation*}
\phi^i=\phi_{\bar{j}}\phi^{i\bar{j}}, \quad 
\phi^{\bar{j}}=\phi_{i}\phi^{i\bar{j}}, \quad 
|d\phi|_{\phi}^2=\phi_i\phi_{\bar{j}}\phi^{i\bar{j}}. 
\end{equation*}
Then
\begin{equation}\label{eq.2.5}
\det H(\rho)=(-\rho)^{n+1}\left(1-|d\phi|^2_{\phi}\right) \det H(\phi). 
\end{equation}
\begin{lemma}
Given $H(\phi)$ positive definite, then $H(\rho)$ is positive definite if and only if 
$$|d\phi|^2_{\phi}<1.$$
\end{lemma}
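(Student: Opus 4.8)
The plan is to reduce the statement to a standard fact about rank-one perturbations of a positive definite Hermitian matrix. Write $A=H(\phi)=[\phi_{i\bar{j}}]$ and let $v$ denote the column vector with entries $\phi_i$, so that $vv^*$ is the rank-one Hermitian matrix with entries $\phi_i\phi_{\bar{j}}$ (using $\phi_{\bar{j}}=\overline{\phi_j}$ since $\phi$ is real). Then the second identity displayed just before the lemma reads
$$
H(\rho)=(-\rho)\bigl(A-vv^*\bigr).
$$
Since $-\rho>0$ on $\mathring{X}$, the matrix $H(\rho)$ is positive definite if and only if $A-vv^*$ is positive definite. Thus everything comes down to deciding when subtracting the rank-one term $vv^*$ keeps $A$ positive definite.

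First I would diagonalize the positive part. Because $A>0$ it has a positive definite square root $A^{1/2}$, and
$$
A-vv^*=A^{1/2}\bigl(I-ww^*\bigr)A^{1/2},\qquad w=A^{-1/2}v.
$$
Hence $A-vv^*>0$ if and only if $I-ww^*>0$. The Hermitian matrix $I-ww^*$ has eigenvalue $1$ on the orthogonal complement of $w$ and eigenvalue $1-\|w\|^2$ on the line spanned by $w$, so it is positive definite exactly when $\|w\|^2<1$. Finally
$$
\|w\|^2=w^*w=v^*A^{-1}v=\phi_i\phi_{\bar{j}}\phi^{i\bar{j}}=|d\phi|^2_{\phi},
$$
by the definition of $|d\phi|^2_{\phi}$ and of the inverse $[\phi^{i\bar{j}}]$. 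Combining these equivalences gives $H(\rho)>0\iff|d\phi|^2_{\phi}<1$, as claimed.

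There is no serious obstacle here; the only point needing care is that the determinant identity \eqref{eq.2.5} by itself is \emph{not} sufficient. A vanishing or negative determinant does rule out positive definiteness, but a positive determinant does not imply it in general. The square-root reduction above is the cleanest fix, since it pins down every eigenvalue rather than just their product. Alternatively, one could argue by continuity along the family $A-t\,vv^*$, $t\in[0,1]$: this is a rank-one decreasing perturbation, so at most one eigenvalue can become nonpositive, and by the matrix determinant lemma (the $t$-scaled form of \eqref{eq.2.5}) its determinant is $\det(A)\bigl(1-t\,|d\phi|^2_{\phi}\bigr)$. When $|d\phi|^2_{\phi}<1$ this stays positive for all $t$, forcing $A-vv^*$ to remain positive definite; when $|d\phi|^2_{\phi}\geq 1$ the determinant is nonpositive at $t=1$, so positive definiteness fails.
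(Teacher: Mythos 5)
Your proof is correct. The paper itself states this lemma without proof; the intended argument, which surfaces explicitly later in the proof of Lemma \ref{lem.3.3} (``$H(\phi)$ is positive definite, so $H(\rho)$ is positive definite if and only if $\det H(\rho)>0$''), is to combine the determinant identity \eqref{eq.2.5} with the tacit observation that $H(\rho)=(-\rho)\bigl(H(\phi)-vv^*\bigr)$ is a positive multiple of a rank-one downward perturbation of a positive definite matrix, so that at most one eigenvalue can fail to be positive and the sign of the determinant decides positive definiteness. Your closing remark supplies exactly that missing justification, and your primary route --- the congruence $A-vv^*=A^{1/2}\bigl(I-ww^*\bigr)A^{1/2}$ with $\|w\|^2=v^*A^{-1}v=|d\phi|^2_{\phi}$ --- is a cleaner, fully self-contained variant that identifies all the eigenvalues of $I-ww^*$ rather than just their product. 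Your caveat that \eqref{eq.2.5} alone is not sufficient is well taken: a positive determinant does not in general imply positive definiteness, and it is precisely the rank-one structure (or your square-root reduction) that closes the gap the paper leaves implicit.
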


If $H(\rho)$ is also positive definite with inverse $[\rho^{i\bar{j}}]$, then
\begin{equation}\label{eq.2.6}
\rho^{i\bar{j}}=\frac{1}{-\rho}\left(\phi^{i\bar{j}}-\frac{\phi^i\phi^{\bar{j}}}{|d\phi|^2_{\phi}-1}\right). 
\end{equation}
In this case we can denote
\begin{equation*}
\rho^i=\rho_{\bar{j}}\rho^{i\bar{j}}, \quad 
\rho^{\bar{j}}=\rho_{i}\rho^{i\bar{j}}, \quad 
|d\rho|_{\rho}^2=\rho_i\rho_{\bar{j}}\rho^{i\bar{j}}.
\end{equation*}
Then
\begin{equation}\label{eq.2.2}
\det H(\phi)=(-\rho)^{-(n+2)} (|d\rho|^2_{\rho}-\rho)\det H(\rho). 
\end{equation}
Moreover we can recover $[\phi^{i\bar{j}}]$ from $[\rho^{i\bar{j}}]$ by 
\begin{equation}\label{eq.2.3}
\phi^{i\bar{j}}=(-\rho)\left(\rho^{i\bar{j}}-\frac{\rho^i\rho^{\bar{j}}}{|d\rho|^2_{\rho}-\rho}
\right). 
\end{equation}
%
%
%
%
%

A direct computation by Li-Wang \cite{LW} shows that all the curvature tensor is approximately constant when approaching to the boundary. More explicitly
\begin{equation*}
R_{i\bar{j}k\bar{l}} =\frac{1}{2}\left(\phi_{i\bar{j}} \phi_{k\bar{l}}+ \phi_{k\bar{j}} \phi_{i\bar{l}}\right)+\mathcal{O}\left(\frac{1}{-\rho}\right)
=2\left(g_{i\bar{j}} g_{k\bar{l}}+ g_{k\bar{j}} g_{i\bar{l}}\right)+\mathcal{O}\left(\frac{1}{-\rho}\right). 
\end{equation*}
Hence the K\"{a}hler metric $g_{\phi}$ is \textit{asymptotically complex hyperbolic} (ACH) with all the holomorphic sectional curvature having limit $-4$ when $\rho\rightarrow 0$.
The Ricci curvature tensor for the K\"{a}hler  metric $g_{\phi}$ can be expressed by
\begin{equation}\label{eq.2.7}
R_{i\bar{j}}= -\left( \log \det H(\phi)\right)_{i\bar{j}} =-(n+2)\phi_{i\bar{j}}- \left( \log J[\rho]\right)_{i\bar{j}}, 
\end{equation}
where $J[\rho]$ is the Fefferman operator defined by
\begin{equation}\label{eq.2.8}
J[\rho]=-\det \left[\begin{array}{cc}
\rho & \rho_{\bar{j}}\\ \rho_i &\rho_{i\bar{j}}
\end{array}\right].
\end{equation}
in local holomorphic coordinates. Using above notation, 
\begin{equation}\label{eq.2.9}
J[\rho]=(|d\rho|^2_{\rho}-\rho)\det H(\rho)=e^{-(n+2)\phi}\det H(\phi). 
\end{equation}
%
%
%
If $J[\rho]=1$,  it is easy to see from (\ref{eq.2.7}) that the K\"{a}hler metric $g_{\phi}$ is Einstein, i.e.
$$
Ric_{\phi}=-2(n+2)g_{\phi}.
$$
We call such $g_{\phi}$ \textit{asymptotically complex hyperbolic Einstein} (ACHE) metric  of Bergman type and will discuss this more in Section \ref{sec.ake}.


\subsection{CR boundary.} The complex structure on $X$ defined in Section \ref{sec.ach} induces a CR-structure on the boundary $M$. For the purpose of studying the asymptotic behaviour of metric $g_{\phi}$ defined by (\ref{kahlermetric}),  we will also discuss the CR structure on each level set $M_{\epsilon}=\{\rho=-\epsilon\}$ for sufficiently small $\epsilon> 0$. Notice that there exists  some $\epsilon_0>0$ such that for each $0\leq \epsilon<\epsilon_0$, $M_{\epsilon}$ is a smooth manifold of real dimension $2n+1$. Denote by $U=\{-\epsilon_0<\rho\leq 0 \}$  the collar neighbourhood of $M$ in $X$. 

The \textit{CR-structure} $(H_{\epsilon}, J)$ on $M_{\epsilon}$ is defined by the hyperplane bundle $H_{\epsilon}=\mathrm{Re}\mathcal{H}_{\epsilon}$ with
\begin{equation*}
\mathcal{H}_{\epsilon}=\mathbb{C}TM_{\epsilon}\cap T_{1,0}U, 
\end{equation*}
and a smooth bundle map
\begin{equation*}
\begin{aligned}
J: H_{\epsilon}&\longrightarrow H_{\epsilon}
\\
(V+\overline{V}) &\longrightarrow \sqrt{-1}(V-\overline{V}), 
\end{aligned}
\end{equation*}
which gives the almost complex structure on $H_{\epsilon}$. 
From the definition, the CR-structure is automatically integrable since $[\mathcal{H}_{\epsilon}, \mathcal{H}_{\epsilon}]\subset \mathcal{H}_{\epsilon}$. Associated to the boundary defining function $\rho$, there is a real one-form
\begin{equation*}
\Theta=\frac{\sqrt{-1}}{2}(\overline{\partial}\rho-\partial\rho),
\end{equation*}
which induces the contact form $\theta_{\epsilon}=i^*_{\epsilon}\Theta$ on each $M_{\epsilon}$ by the pullback map of embedding $i_{\epsilon}: M_{\epsilon}\rightarrow X$. 
The Levi form on $M_{\epsilon}$ is defined by 
\begin{equation*}
L_{\theta_{\epsilon}}=-\sqrt{-1}d\theta_{\epsilon}.
\end{equation*}
For $V,W\in \mathcal{H}_{\epsilon}$, 
\begin{equation*}
L_{\theta_{\epsilon}}(V, \overline{W})=-\sqrt{-1}d\theta_{\epsilon}(V, \overline{W})=\partial\overline{\partial}\rho(V,\overline{W}). 
\end{equation*}
A continuity argument shows that
$L_{\theta_{\epsilon}}$ is positive definite for sufficiently small $\epsilon\geq 0$ since $M$ is strictly pseudoconvex. Hence it defines the pseudo-Hermitian structure on each
 $M_{\epsilon}$ for $\epsilon\geq 0$ small. 
 
Notice that if $\tilde{\rho}=e^F \rho$ for some $F\in C^{\infty}(X)$, then $\tilde{\rho}$ also induces a contact form $\tilde{\theta}_{\epsilon}$ on 
$\tilde{M}_{\epsilon}=\{\tilde{\rho}=-\epsilon\}$, as well as  the Levi form $L_{\tilde{\theta}_{\epsilon}}$. By simple computation, 
\begin{equation*}
\tilde{\theta}_0=e^f\theta_0,\quad L_{\tilde{\theta}_0}=e^fL_{\theta_0}, 
\end{equation*}
where $f=F|_{M}$. 
Therefore the conformal class of the contact form and pseudo-Hermitian structure on $M$ is independent of choice of boundary defining function. Generally speaking, this is not true on $M_{\epsilon}$ if $\epsilon>0$. 

Let $\Xi$ be a  (1,0)-vector filed on the collar neighborhood $U$ of $M$ and satisfy
\begin{equation*}
\partial\rho(\Xi)=1, \quad \Xi \righthalfcup\partial\overline{\partial}\rho=r\overline{\partial}\rho, 
\end{equation*}
for some $r\in C^{\infty}(U)$. Such $\Xi$ is uniquely determined by $\rho$. And $r$ is called the \textit{transverse curvature}. Decompose $\Xi$ into real part and imaginary part: 
\begin{equation*}
\Xi=\frac{1}{2}(N-\sqrt{-1}T),
\end{equation*}
where $N, T$ are real vector fields on $U$. It is easy to show that
\begin{equation*}
d\rho(N)=2, \quad d\rho(T)=0, \quad \Theta(N)=0,\quad
\Theta(T)=1, \quad T\righthalfcup d\theta_{\epsilon}=0.
\end{equation*}
Hence $T$ is the \textit{characteristic vector field} for each $(M_{\epsilon}, J, \theta_{\epsilon})$ and $N$ is normal to $M_{\epsilon}$.

Next we write some explicit formulae in local frame that is compatible with the CR-structure on $M_{\epsilon}$. For simplicity, denote
$$
\theta=\theta_{\epsilon}, \quad
\mathcal{H}=\mathcal{H}_{\epsilon}, \quad
L_{\theta} = L_{\theta_{\epsilon}}. 
$$   
Let $\{W_{\alpha}: \alpha=1,...,n\} $ be a local frame for $\mathcal{H}$. Then $\{W_{\alpha}, W_{\overline{\alpha}}, T: \alpha=1,...,n\}$ forms a local frame for $\mathbb{C}TM_{\epsilon}$ and $\{W_{\alpha}, W_{\overline{\alpha}}, T, N: \alpha=1,...,n\}$ forms a local frame for $TU$. Let $\theta^{\alpha}$ be the dual form of $W_{\alpha}$, then $\{\theta^{\alpha}, \theta^{\overline{\alpha}},\theta: \alpha=1,...,n\}$ is a dual coframe for $\mathbb{C}T^*M_{\epsilon}$ and $\{\theta^{\alpha}, \theta^{\overline{\alpha}},\partial\rho,\overline{\partial}\rho: \alpha=1,...,n\}$ is a dual coframe for $T^*U$. 

The Levi form on each $\mathcal{H}$ is given by
\begin{equation*}
L_{\theta}=h_{\alpha\bar{\beta}}\theta^{\alpha}\wedge \theta^{\bar{\beta}}, 
\end{equation*}
where $h_{\alpha\bar{\beta}}$ is a smooth function on $U$ valued in Hermitian matrix. 
Then
\begin{equation*}
\partial\overline{\partial}\rho =h_{\alpha\bar{\beta}}\theta^{\alpha}\wedge \theta^{\bar{\beta}}
+r\partial \rho\wedge \overline{\partial}\rho. 
\end{equation*}
It is easy to see that $r$ is a real function. Near $M$, $\rho$ is strictly plurisubharmonic if and only if $r>0$. 
Then the K\"{a}hler form $\omega_{\phi}$ can be expressed by
\begin{equation}\label{kahlerform2}
\omega_{\phi}= \sqrt{-1}\left(\frac{1}{-\rho} h_{\alpha\bar{\beta}}\theta^{\alpha}\wedge \theta^{\bar{\beta}}+
\frac{1-r\rho}{\rho^2}\partial \rho\wedge \overline{\partial}\rho\right), 
\end{equation}
and the induced  metric $g_{\phi}$ is given by
\begin{equation}\label{kahlermetric2}
g_{\phi} =\frac{1}{2}\left(\frac{1}{-\rho} h_{\alpha\bar{\beta}}\theta^{\alpha}\varodot \theta^{\bar{\beta}}+
\frac{1-r\rho}{\rho^2}\partial \rho \varodot \overline{\partial}\rho\right). 
\end{equation}
Denote
\begin{equation*}
W_{n+1}=\Xi, \quad W_{\overline{n+1}}=\overline{\Xi}, \quad
\theta^{n+1}=\partial_{\rho}, \quad \theta^{\overline{n+1}}=\overline{\partial}\rho. 
\end{equation*}
In what follows, we use Greek indices $\alpha, \beta, \cdots $ as integers chosen  from $\{1,...,n\}$ and Latin indices $i, j, \dots$ as integers chosen from $\{1,...,n+1\}$. 

Graham and Lee proved in \cite{GL}:
\begin{proposition}
There exists a unique linear connection $\nabla$ on $U$ such that 
\begin{itemize}
\item[(a)] For any vector field $V, W$ tangent to some $M_{\epsilon}$, $\nabla_VW=\hat{\nabla}_V^{\epsilon}W$, where $\hat{\nabla}^{\epsilon}$ is the pseudo-Hermitian connection on $M_{\epsilon}$. 
\item[(b)] $\nabla$ preserves $\mathcal{H}$, $N$, $T$ and $L_{\theta}$: for any $X\in TU$,  $\nabla_X\mathcal{H}\subset \mathcal{H}$ and $\nabla T=\nabla N=\nabla L_{\theta}=0$. 
\item[(c)] If $\{W_{\alpha}: \alpha=1,...,n\}$ is a frame of $\mathcal{H}$ and $\{\theta^{\alpha}, \partial_{\rho}: \alpha=1,...,n\}$ is the dual $(1,0)$-coframe on $U$, then the connection $1$-forms, defined by $\nabla W_{\alpha}=\rho_{\alpha}^{\ \beta}\varotimes W_{\beta}$ satisfy the following structure equation:
\begin{equation*}
d\theta^{\alpha}=\theta^{\beta}\wedge \rho_{\beta}^{\ \alpha} -\sqrt{-1} \partial\rho \wedge \tau^{\alpha} +\sqrt{-1}(W^{\alpha}r) d\rho \wedge \theta +\frac{1}{2} r d\rho\wedge \theta^{\alpha}. 
\end{equation*}
\end{itemize}
Here $\nabla$ is called the ambient connection defined by $\rho$.
\end{proposition}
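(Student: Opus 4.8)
The plan is to reduce the whole statement to the classical normalization argument behind the Tanaka--Webster connection, carried out now on the full collar $U$ rather than on a single level set. Throughout I work in an admissible frame $\{W_\alpha, W_{\bar\alpha}, T, N\}$ with dual coframe $\{\theta^\alpha, \theta^{\bar\alpha}, \theta, d\rho\}$, noting that $T$ is dual to $\theta$ and $N$ is dual to $\tfrac12 d\rho$. Condition (b) is very rigid: since $\nabla$ must preserve the splitting $TU=\mathcal H\oplus\mathbb R T\oplus\mathbb R N$ and satisfy $\nabla T=\nabla N=0$, it is equivalent to $\nabla\theta=\nabla(d\rho)=0$ together with $\nabla\mathcal H\subseteq\mathcal H$. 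Consequently $\nabla$ is completely encoded, in the chosen frame, by the connection $1$-forms $\omega_\alpha{}^\beta$ (the $\rho_\alpha{}^\beta$ of the statement) defined by $\nabla W_\alpha=\omega_\alpha{}^\beta\otimes W_\beta$ together with their conjugates. Finally, $\nabla L_\theta=0$ for $L_\theta=h_{\alpha\bar\beta}\theta^\alpha\wedge\theta^{\bar\beta}$ is equivalent to the unitarity relation $dh_{\alpha\bar\beta}=\omega_{\alpha\bar\beta}+\omega_{\bar\beta\alpha}$, where $\omega_{\alpha\bar\beta}=h_{\gamma\bar\beta}\omega_\alpha{}^\gamma$ and indices are raised and lowered with $h_{\alpha\bar\beta}$. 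Thus the content of (b) is exactly that $\nabla$ is a unitary connection on $(\mathcal H,L_\theta)$ that parallelizes $T$ and $N$.

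Next I would determine $\omega_\alpha{}^\beta$ and the torsion $\tau^\alpha$ from the structure equation in (c). Expanding $d\theta^\alpha$ in the coframe --- using $d(\partial\rho)=\bar\partial\partial\rho=-\partial\bar\partial\rho$ and $\partial\bar\partial\rho=h_{\alpha\bar\beta}\theta^\alpha\wedge\theta^{\bar\beta}+r\,\partial\rho\wedge\bar\partial\rho$, which is how the transverse curvature $r$ enters --- one obtains the decomposition of $d\theta^\alpha$ into its components along $\theta^\beta\wedge\theta^\gamma$, $\theta^\beta\wedge\theta^{\bar\gamma}$, $\partial\rho\wedge\theta^\beta$, $\partial\rho\wedge\theta^{\bar\beta}$, $d\rho\wedge\theta$ and $d\rho\wedge\theta^\alpha$, the $(0,2)$ part being absent by integrability. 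Matching these against the right-hand side of (c) prescribes each block: the horizontal data are absorbed into $\theta^\beta\wedge\omega_\beta{}^\alpha$, the pure $\partial\rho\wedge\theta^{\bar\beta}$ part is the torsion, forcing $\tau^\alpha=A^\alpha{}_{\bar\beta}\theta^{\bar\beta}$ of pure type, and the two $d\rho$-terms are pinned to the explicit transverse-curvature corrections. Together with the unitarity relation this is an algebraic linear system for the components of $\omega$ and $A^\alpha{}_{\bar\beta}$ admitting a unique solution, exactly as in the fundamental lemma of pseudo-Hermitian geometry; solvability gives existence and the absence of residual freedom gives uniqueness. That $\nabla$ is independent of the admissible frame follows from the standard tensorial transformation law for connection $1$-forms under a unitary change of frame.

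To verify (a) I would pull the structure equation back along the embedding $i_\epsilon\colon M_\epsilon\hookrightarrow U$. Since $\rho$ is constant on $M_\epsilon$ we have $i_\epsilon^*(d\rho)=0$, so both transverse-curvature correction terms vanish under $i_\epsilon^*$; and since $i_\epsilon^*(\partial\rho)=\sqrt{-1}\,\theta_\epsilon$ the torsion term becomes $-\sqrt{-1}\cdot\sqrt{-1}\,\theta\wedge\tau^\alpha=\theta\wedge\tau^\alpha$. Hence $i_\epsilon^*$ of (c) is precisely the Tanaka--Webster structure equation $d\theta^\alpha=\theta^\beta\wedge\omega_\beta{}^\alpha+\theta\wedge\tau^\alpha$ on $(M_\epsilon,J,\theta_\epsilon)$, with $\omega$ unitary and $\tau$ of pure type. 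By the uniqueness of the pseudo-Hermitian connection these restricted forms coincide with those of $\hat\nabla^\epsilon$, so the tangential part of $\nabla$ agrees with $\hat\nabla^\epsilon$, which is (a); conditions (b) and (c) hold by the construction.

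The main obstacle is the torsion normalization off the level sets. On a fixed $M_\epsilon$ the argument is the classical pseudo-Hermitian fundamental lemma; the genuinely new point is that on $U$ the form $d\theta^\alpha$ acquires components in the normal direction $d\rho$, and one must show that the constraints $\nabla N=0$, $\nabla\mathcal H\subseteq\mathcal H$ and $\nabla L_\theta=0$ force these components to equal exactly $\sqrt{-1}(W^\alpha r)\,d\rho\wedge\theta+\tfrac12 r\,d\rho\wedge\theta^\alpha$ --- that is, the appearance of the transverse curvature $r$ and its horizontal derivatives $W^\alpha r$ is not free data but is dictated by the geometry of $\Xi$ and the requirement that a \emph{single} unitary connection extend every $\hat\nabla^\epsilon$ simultaneously. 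Carrying out the explicit computation of $d\theta^\alpha$ and checking that the resulting linear system is consistent is where the real work lies; once this compatibility is established, existence, uniqueness and properties (a)--(c) all follow from the normalization.
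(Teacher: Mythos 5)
The paper offers no proof of this proposition---it is quoted from Graham--Lee \cite{GL} (their Proposition~1.1)---so the only meaningful comparison is with that source, and your argument is essentially theirs: encode condition (b) as unitarity of the connection $1$-forms $\rho_\alpha{}^{\ \beta}$ on $(\mathcal H,L_\theta)$ together with $\nabla T=\nabla N=0$, use the structure equation in (c) plus the pure-type normalization of $\tau^\alpha$ to obtain a uniquely solvable linear system (the pseudo-Hermitian fundamental lemma run on the collar rather than on one level set), and then pull back along $i_\epsilon$ to recover the Tanaka--Webster connection of each $M_\epsilon$. Your sign bookkeeping is right: $i_\epsilon^*(d\rho)=0$ kills the two transverse-curvature terms, and $i_\epsilon^*\partial\rho=\sqrt{-1}\,\theta_\epsilon$ turns $-\sqrt{-1}\,\partial\rho\wedge\tau^\alpha$ into $\theta\wedge\tau^\alpha$, which is exactly Webster's structure equation, so (a) follows from uniqueness of $\hat\nabla^\epsilon$. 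The one step you explicitly defer---checking that, once the $d\rho$-component of $\rho_\beta{}^{\ \alpha}$ is fixed by the normal part of $dh_{\alpha\bar\beta}=\rho_{\alpha\bar\beta}+\rho_{\bar\beta\alpha}$, the residual $d\rho$-components of $d\theta^\alpha$ are precisely $\sqrt{-1}(W^\alpha r)\,d\rho\wedge\theta+\tfrac12 r\,d\rho\wedge\theta^\alpha$---is the computational heart of \cite{GL}, but your setup reduces it to a routine expansion of $d\theta^\alpha$ using $\partial\overline{\partial}\rho=h_{\alpha\bar\beta}\theta^\alpha\wedge\theta^{\bar\beta}+r\,\partial\rho\wedge\overline{\partial}\rho$, so I see no gap in the approach.
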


The ambient connection was used to study the asymptotic behaviour of Laplacian operator $\triangle_{\phi}$ in \cite{GL} and \cite{HPT}. 


\subsection{Approximate ACHE metric.} \label{sec.ake}
An \textit{approximate} asymptotically complex hyperbolic Einstein (ACHE) metric of Bergman type  defined on a complex domain is given by an \textit{approximate} solution to the Monge-Amp\`{e}re equation. 

Let  $\Omega$ be a pseudoconvex domain in $\mathbb{C}^{n+1} $ with smooth boundary $\partial \Omega =\Sigma$. If $\rho\in C^{\infty}(\mathring{\Omega})$ solves  the Monge-Amp\`{e}re equation 
\begin{equation}\label{mongeampere}
\begin{cases}
J[\rho]=1&\ \textrm{in $\mathring{\Omega}$}, \\
\rho|_{\Omega}=0, &\ \textrm{on $\Sigma$},
\end{cases}
\end{equation}
then the K\"{a}hler metric $g_{\phi}$,  induced by K\"{a}hler form $\omega_{\phi}=\frac{\sqrt{-1}}{2}\partial\partial\phi$,  with $\phi=- \log (-\rho)$, is K\"{a}hler-Einstein. 
Feffermen \cite{Fe} showed that for any strictly pseudoconvex domain $\Omega\subset\mathbb{C}^{n+1}$ with smooth boundary $\Sigma$,  there exists an approximate solution $\rho\in C^{\infty}(\Omega)$ satisfies
\begin{equation*}
\begin{cases}
J[\rho]=1+\mathcal{O}(\rho^{n+2})&\ \textrm{in $\mathring{\Omega}$}, \\
\rho|_{\Omega}=0, &\ \textrm{on $\Sigma$}. 
\end{cases}
\end{equation*}
Cheng and Yau \cite{CY} showed the existence and uniqueness of the exact solution to (\ref{mongeampere}) which is in $C^{\infty}(\mathring{\Omega})\cap C^{n+5/2-\epsilon}(\Omega)$.  Lee and Melrose \cite{LM} investigated the asymptotic behaviour of this exact solution and showed that the solution has only conormal singularity expressed by logarithmic terms begins at order $n+3$.  

Hislop, Perry and Tang \cite{HPT} extended Fefferman's approximate solution to complex manifolds with strictly peudoconvex boundary. 
\begin{definition}\label{def.app}
We call a K\"{a}hler metric $g$ on $\mathring{X}$ an approximate ACHE metric of Bergman type if  $g$ is induced by the K\"{a}hler form $\omega=-\frac{\sqrt{-1}}{2}\partial \overline{\partial}\log(-\rho)$ where $\rho$ is a global approximate  solution to the Monge-Amp\`{e}re equation in the following sense:   $\rho$ is a smooth boundary defining function and for any $p\in M$, there exists a neighbourhood  $U\subset X$ and a set of holomorphic coordinates  $\{z^i: i=1,...,n+1\}$ such that
\begin{equation*}
\begin{cases}
J[\rho]=1+\mathcal{O}(\rho^{n+2})&\ \textrm{in $U\cap\mathring{X}$}, \\
\rho=0, &\ \textrm{on $U\cap M$}. 
\end{cases}
\end{equation*}
\end{definition}

%
%
The condition of existence of global approximate  Monge-Amp\`{e}re solution is given by 
Hislop, Perry and Tang \cite{HPT}:

\begin{proposition}
Suppose $X^{n+1}$ is a compact complex manifold with boundary $M=\partial X$. There exists a global approximate solution $\rho$ to the Monge-Amp\`{e}re equation in a neighbourhood of $M$ if $M$ admits a pseudo-Hermitian structure $\theta$ with the following property: in a neighbourhood of any point $p\in M$, there is a local closed $(n+1,0)$ form $\xi$ such that $\theta$ is volume-normalised with respect to $\xi$.  
\end{proposition}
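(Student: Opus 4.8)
The plan is to build $\rho$ by solving the Monge--Amp\`ere equation locally, chart by chart, normalising every local solution against the single volume-normalised contact form $\theta$, and then patching the pieces together with a partition of unity. The whole scheme rests on the observation that the patching errors, although they look dangerous for a determinant expression like $J[\rho]$, are controlled by the ACH Laplacian, which preserves orders of vanishing.

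First I would record two transformation laws for the Fefferman operator. Under a biholomorphic change of holomorphic coordinates $J[\rho]$ is multiplied by the squared modulus of the holomorphic Jacobian, so the normalisation $J[\rho]=1+\mathcal{O}(\rho^{n+2})$ of Definition~\ref{def.app} is equivalent, in an arbitrary holomorphic frame, to $J[\rho]=|h|^2+\mathcal{O}(\rho^{n+2})$ for a nowhere-vanishing holomorphic $h$. Moreover, writing $\phi=-\log(-\rho)$ and using \eqref{eq.2.9}, if $F$ is pluriharmonic then $H(F)=0$, so $\phi\mapsto\phi-F$ leaves $\det H(\phi)$ unchanged and
\[
J[e^{F}\rho]=e^{(n+2)F}J[\rho].
\]
A local closed $(n+1,0)$-form $\xi=h\,dz^1\wedge\cdots\wedge dz^{n+1}$ has holomorphic coefficient $h$, so $\tfrac{1}{n+2}\log|h|^2$ is pluriharmonic, and these two laws let me pass freely between the coordinate-normalised problem and the $\xi$-normalised problem $J[\rho]=|h|^2+\mathcal{O}(\rho^{n+2})$.

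Next I would run Fefferman's iteration in each chart. Cover $M$ by finitely many patches $U_\alpha$, each with holomorphic coordinates and a local closed form $\xi_\alpha$ as in the hypothesis. Starting from any defining function inducing the prescribed $\theta$ on $U_\alpha\cap M$, I solve order by order: by \eqref{eq.2.7} the failure of the Einstein (Monge--Amp\`ere) condition is measured by $(\log J[\rho])_{i\bar j}$, and at each step the correction solves a linear equation whose solvability is exactly Fefferman's computation, the first genuine (logarithmic) obstruction appearing only at order $n+3$. This yields a local solution $\rho_\alpha$ with $J[\rho_\alpha]=|h_\alpha|^2+\mathcal{O}(\rho^{n+2})$ inducing $\theta$, unique modulo $\mathcal{O}(\rho^{n+3})$. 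The role of the volume-normalisation of $\theta$ with respect to $\xi_\alpha$ is precisely to make the order-zero boundary data compatible with the chosen right-hand side $|h_\alpha|^2$, so that the iteration starts and the induced contact form is $\theta$ rather than some rescaling of it.

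Finally I would patch. On an overlap $U_\alpha\cap U_\beta$ the two forms differ by a holomorphic transition $g_{\alpha\beta}=\xi_\alpha/\xi_\beta$; volume-normalisation against the \emph{same} $\theta$ forces $|g_{\alpha\beta}|=1$ on $M$, which is exactly the relation making the $\xi_\alpha$- and $\xi_\beta$-normalised problems the same invariant equation, so $\rho_\alpha-\rho_\beta=\mathcal{O}(\rho^{n+3})$ there. Setting $\rho=\sum_\alpha\chi_\alpha\rho_\alpha$ for a partition of unity $\{\chi_\alpha\}$ gives $\rho-\rho_\alpha=\mathcal{O}(\rho^{n+3})$ in each chart. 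The subtle point, which I expect to be the main obstacle, is the order bookkeeping: a generic second-order operator would turn an $\mathcal{O}(\rho^{n+3})$ change of $\rho$ into an $\mathcal{O}(\rho^{n+1})$ change of $J[\rho]$, which would be useless. However, by \eqref{eq.2.7} the linearisation of $\log J[\rho]=\log\det H(\phi)-(n+2)\phi$ in $\phi$ is $\phi^{i\bar j}\partial_i\partial_{\bar j}(\delta\phi)-(n+2)\,\delta\phi$, i.e. the complex Laplacian of $g_\phi$ plus a zeroth-order term, and the perturbation $\psi=\rho-\rho_\alpha=\mathcal{O}(\rho^{n+3})$ corresponds to a change $\delta\phi=\mathcal{O}(\rho^{n+2})$ (since $\delta\phi=-\psi/\rho+\cdots$). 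Because $g_\phi$ is ACH, this Laplacian has a diagonal indicial operator (it sends $\rho^s$ to a multiple of $\rho^s$ to leading order, using that $\phi^{i\bar j}\rho_i\rho_{\bar j}=\mathcal{O}(\rho^2)$, rather than lowering the order by two), so it maps $\mathcal{O}(\rho^{n+2})$ to $\mathcal{O}(\rho^{n+2})$. Hence $\log J[\rho]-\log J[\rho_\alpha]=\mathcal{O}(\rho^{n+2})$ and $\rho$ satisfies $J[\rho]=|h_\alpha|^2+\mathcal{O}(\rho^{n+2})$ in every chart, i.e. it is a global approximate Monge--Amp\`ere solution. Verifying the two ingredients of this last step --- that the unit-modulus transition relation genuinely identifies the local equations, and that the ACH indicial structure gives the claimed order preservation --- is where the real work lies.
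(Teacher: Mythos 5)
This proposition is quoted in the paper from Hislop--Perry--Tang \cite{HPT} with no proof supplied, so there is no in-paper argument to compare against; I can only assess your proposal on its own terms. Your strategy --- Fefferman's local iteration in each chart normalised against $\xi_\alpha$, followed by a partition-of-unity patching whose error is controlled because the linearisation of $\log J[\rho]$ in $\phi=-\log(-\rho)$ is an ACH Laplacian with diagonal indicial action --- is indeed the argument behind \cite{HPT}, and the two transformation laws you isolate ($J$ scales by the squared Jacobian modulus under biholomorphisms, and $J[e^{F}\rho]=e^{(n+2)F}J[\rho]$ exactly for pluriharmonic $F$, via \eqref{eq.2.9}) are the right tools. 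Your order bookkeeping at the end is also correct: the dangerous term in $\phi^{i\bar j}(\delta\phi)_{i\bar j}$ is killed by $\phi^{i\bar j}\rho_i\rho_{\bar j}=\mathcal{O}(\rho^2)$, and the quadratic remainder of $\log\det$ is $\mathcal{O}(\rho^{2n+4})$, so an $\mathcal{O}(\rho^{n+3})$ change of $\rho$ perturbs $J[\rho]$ only at order $\rho^{n+2}$.

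The one step that does not work as written is the patching criterion. You deduce $|g_{\alpha\beta}|=1$ \emph{on $M$} from the common volume normalisation and claim this already identifies the $\alpha$- and $\beta$-normalised equations. It does not: the problems $J[\rho]=|h_\alpha|^2+\mathcal{O}(\rho^{n+2})$ and $J[\rho]=|h_\beta|^2+\mathcal{O}(\rho^{n+2})$ coincide only if $|g_{\alpha\beta}|\equiv 1$ on the overlap \emph{in $X$}; if the moduli agree only along $M$, the pluriharmonic rescaling gives $\rho_\alpha=|g_{\alpha\beta}|^{2/(n+2)}\rho_\beta+\mathcal{O}(\rho^{n+3})$, whence $\rho_\alpha-\rho_\beta$ is a priori only $\mathcal{O}(\rho^{2})$ and the whole scheme collapses. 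The fix is short but must be said: $u=\log|g_{\alpha\beta}|^2$ is pluriharmonic (the $g_{\alpha\beta}$ are nonvanishing holomorphic because the $\xi$'s are closed $(n+1,0)$-forms) and vanishes on $M$; writing $u=a\rho+\mathcal{O}(\rho^2)$ and restricting $\partial\overline{\partial}u=0$ to $\mathcal{H}\otimes\overline{\mathcal{H}}$ along $M$ gives $a\,L_\theta=0$, so strict pseudoconvexity forces $a=0$, and iterating (testing against $\Xi\otimes\overline{\Xi}$ at the next order) shows $u$ vanishes to infinite order on $M$; real-analyticity of pluriharmonic functions then gives $u\equiv 0$ near $M$. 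Only after this does your uniqueness-mod-$\mathcal{O}(\rho^{n+3})$ comparison apply. A smaller point worth a sentence in a final write-up: the hypothesis provides $\xi$ as a closed form near a point of $M$, so you should justify that its CR-holomorphic coefficient extends holomorphically to the pseudoconvex side (e.g.\ by Lewy extension) before speaking of $|h_\alpha|^2$ off $M$.
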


For $n\geq 2$, the condition of existence of global approximate solution to the Monge-Amp\`{e}re equation  can be given in a more geometric way, followed from the result given by Lee \cite{Le2}: 

\begin{proposition}
For $\mathrm{dim}M\geq 5$, a contact form $\theta$ on $M$ is pseudo-Einstein if and only if for each $p\in M$ there is a neighbourhood of $p$ in $M$ and a locally defined closed section $\xi$ of the canonical bundle with respect to which $\theta$ is volume-normalised. 
\end{proposition}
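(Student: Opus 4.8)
This is Lee's characterisation of pseudo-Einstein contact forms \cite{Le2}, and the plan is to convert the geometric statement into the local solvability of a $\bar\partial_b$-equation whose obstruction is the trace-free Webster--Ricci tensor. Fix $p\in M$ and an admissible coframe $\{\theta,\theta^1,\dots,\theta^n\}$ adapted to the CR structure, normalised so that $\det[h_{\alpha\bar\beta}]=1$; then $\zeta_0=\theta\wedge\theta^1\wedge\cdots\wedge\theta^n$ is a local section of the canonical bundle $K$ with respect to which $\theta$ is volume-normalised. Because volume-normalisation constrains only the modulus of a section, every local volume-normalised section has the form $\zeta=e^{\sqrt{-1}\gamma}\zeta_0$ for a real function $\gamma$. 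Hence the proposition reduces to the following: near each point there is a real $\gamma$ making $e^{\sqrt{-1}\gamma}\zeta_0$ closed if and only if $\theta$ is pseudo-Einstein. Everything therefore hinges on computing $d\zeta_0$.

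First I would evaluate $d\zeta_0$ from the pseudo-Hermitian structure equations $d\theta=\sqrt{-1}\,h_{\alpha\bar\beta}\,\theta^\alpha\wedge\theta^{\bar\beta}$ and $d\theta^\alpha=\theta^\beta\wedge\omega_\beta{}^\alpha+\theta\wedge\tau^\alpha$. Wedging against $\theta^1\wedge\cdots\wedge\theta^n$ annihilates every $\theta^\gamma$ as well as the leading $\theta$, so the $d\theta$-contribution vanishes and the torsion term $\theta\wedge\tau^\alpha$ drops out because it carries a repeated $\theta$; only the trace $\sigma=\omega_\alpha{}^\alpha$ of the connection form survives, giving $d\zeta_0=-\sigma\wedge\zeta_0$. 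The normalisation $\det[h_{\alpha\bar\beta}]=1$ forces $\sigma+\bar\sigma=d\log\det[h_{\alpha\bar\beta}]=0$, so $\sigma$ is purely imaginary. Consequently $d(e^{\sqrt{-1}\gamma}\zeta_0)=e^{\sqrt{-1}\gamma}(\sqrt{-1}\,d\gamma-\sigma)\wedge\zeta_0$, and since wedging with $\zeta_0$ kills precisely the $\theta$- and $\theta^\gamma$-directions, closedness is equivalent to the vanishing of the $\theta^{\bar\alpha}$-components of $\sqrt{-1}\,d\gamma-\sigma$, i.e. to the $\bar\partial_b$-equation $\bar\partial_b\gamma=-\sqrt{-1}\,\sigma^{(0,1)}$ for the real unknown $\gamma$. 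Using that $\sigma$ is imaginary, this says exactly that the real $1$-form $-\sqrt{-1}\,\sigma$ agrees with $d_b\gamma$ modulo $\theta$.

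For the necessity of pseudo-Einstein, I would differentiate this relation. The trace of the Tanaka--Webster curvature is $d\sigma$, whose $(1,1)$-part is the Webster--Ricci form $R_{\alpha\bar\beta}\,\theta^\alpha\wedge\theta^{\bar\beta}$. On the other hand, from $d^2\gamma=0$ together with $d\theta\propto h_{\alpha\bar\beta}\theta^\alpha\wedge\theta^{\bar\beta}$ one finds that the $(1,1)$-part of $d(d_b\gamma)$ is a pointwise multiple of the Levi form, and the same is true of the $d$ of the leftover $\theta$-term. Therefore, if a closed volume-normalised section exists, the $(1,1)$-part of $d\sigma$ must be proportional to the Levi form, which is precisely $R_{\alpha\bar\beta}=\lambda\,h_{\alpha\bar\beta}$, and tracing yields $\lambda=R/n$, the pseudo-Einstein condition. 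It is here that the hypothesis $\dim M\geq 5$, i.e. $n\geq 2$, is essential: for $n=1$ the tensor $R_{\alpha\bar\beta}$ is automatically pure trace, so the $(1,1)$-condition is vacuous and a different, torsion-based formulation is required.

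Conversely, assuming $\theta$ pseudo-Einstein, the $(1,1)$-obstruction just identified vanishes, and I would produce the potential $\gamma$ by solving the $\bar\partial_b$-equation locally. The subtlety is that $\bar\partial_b$-solvability also requires the $(0,2)$-part of $d\sigma$, governed by the pseudo-Hermitian torsion, to be compatible; one removes this by invoking the contracted second Bianchi identity for the Tanaka--Webster connection, which for $n\geq 2$ turns the proportionality $R_{\alpha\bar\beta}=\tfrac{R}{n}h_{\alpha\bar\beta}$ into the divergence relations needed to close the system. A local $\bar\partial_b$-Poincar\'e lemma then yields a real $\gamma$, and $\xi=e^{\sqrt{-1}\gamma}\zeta_0$ is the desired closed volume-normalised section. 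I expect the main obstacle to lie in this converse step---checking carefully that pseudo-Einstein, via Bianchi, supplies exactly the integrability needed and that the resulting potential can be taken real---so for the sharp curvature identities I would quote Lee's computation in \cite{Le2} rather than reproduce every constant.
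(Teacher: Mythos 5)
The paper itself offers no proof of this proposition: it is imported verbatim from Lee \cite{Le2} as a known characterisation of pseudo-Einstein contact forms, so your proposal has to be measured against Lee's original argument. On that score your reconstruction is essentially his proof. The choice of a unimodular admissible coframe, the identity $d\zeta_0=-\omega_\alpha{}^\alpha\wedge\zeta_0$ (with the Levi-form and torsion contributions dying for exactly the reasons you give), the observation that $\sigma=\omega_\alpha{}^\alpha$ is purely imaginary when $\det[h_{\alpha\bar\beta}]=1$, and the identification of the $(1,1)$-part of $d\sigma$ with the Webster--Ricci form are all correct and form the backbone of Lee's theorem. One refinement is worth making explicit: since $\sqrt{-1}\,d\gamma-\sigma$ is purely imaginary, vanishing of its $(0,1)$-part forces vanishing of its $(1,0)$-part as well, so closedness of $e^{\sqrt{-1}\gamma}\zeta_0$ is equivalent to $d\gamma=-\sqrt{-1}\,\sigma+\lambda\theta$ for a real function $\lambda$; this is what makes your necessity argument (apply $d$, compare $(1,1)$-parts against $\lambda\,d\theta$) clean, and your Schur-type remark explaining the role of $n\geq 2$ is exactly right.

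The one step that would fail as literally written is the converse via a ``local $\bar\partial_b$-Poincar\'e lemma.'' No such lemma exists: by the Lewy phenomenon the equation $\bar\partial_b\gamma=\eta$ is not locally solvable for generic $(0,1)$-data, and in any case you need a \emph{real} potential, which a tangential Cauchy--Riemann equation alone will not deliver. Lee's route --- which your own obstruction analysis has already set up --- is to show that the pseudo-Einstein condition together with the contracted Bianchi identity (valid for $n\geq 2$) makes the genuinely real $1$-form $-\sqrt{-1}\,\omega_\alpha{}^\alpha+\tfrac{1}{n}R\,\theta$ (constant depending on curvature conventions) closed: the $(2,0)$- and $(0,2)$-parts of $d\sigma$ vanish automatically because the torsion $A_{\alpha\beta}$ is symmetric, the $(1,1)$-part is cancelled by the $R\,d\theta$ term precisely when $R_{\alpha\bar\beta}=\tfrac{R}{n}h_{\alpha\bar\beta}$, and the $\theta\wedge\theta^\alpha$ components are killed by the divergence identities. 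The ordinary Poincar\'e lemma then produces the real potential $\gamma$, and $\xi=e^{\sqrt{-1}\gamma}\zeta_0$ is the closed volume-normalised section. With that substitution your argument is complete and agrees with \cite{Le2}.
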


\begin{remark}
Given a CR-manifold, a pseudo-Hermitian structure is called \textit{pseudo-Einstein} if the pseudo-Hermitian Ricci tensor is a scalar multiple of the Levi form.
This concept is very different from its analogue in Riemannian geometry. For example, 
the pseudo-Einstein condition does not imply that the scalar curvature is a constant, due to the presence of torsion in the Bianchi identity in pseudo-Hermitian geometry. 
\end{remark}


\subsection{Beltrami-Laplacian opeator.}
In this paper, the Beltrami-Laplacian operator associated to the metric $g_{\phi}$ is defined by $\triangle_{\phi}=\frac{1}{4}\delta d$. For a real function $u$
\begin{equation}
\triangle_{\phi}u=-\phi^{i\bar{j}}u_{i\bar{j}}
\end{equation}
in local holomorphic coordinates $\{z^i: i=1,...,n+1\}$. 
In \cite{GL} Graham and Lee used the ambient connection to write $\triangle_{\phi}$ in the local frame $\{W_{\alpha}, W_{\bar{\alpha}}, \Xi, \overline{\Xi}: \alpha=1,..., n\}$ which is  compatible with the boundary CR-structure. They showed that near boundary, 
\begin{equation}\label{laplaceasymp.1}
\triangle_{\phi}=\frac{{\rho}}{4}\left[ \frac{-\rho}{1-r\rho} (N^2+T^2+2rN+2X_{r})-2\triangle_b +2n N
\right]
\end{equation}
where 
\begin{equation*}
X_r=r^{\alpha} W_{\alpha}+r^{\bar{\alpha}} W_{\bar{\alpha}}
\end{equation*}
and $\triangle_b$ is the sub-Laplacian defined on each $M_{\epsilon}$ by
\begin{equation*}
\triangle_bu=-(u_{\alpha}^{\ \alpha}+u_{\bar{\beta}}^{\ \bar{\beta}})
\end{equation*}
where the covariant derivatives are taken w.r.t. the Tanaka-Webster connection on each $M_{\epsilon}$.  Let $x=-\rho$. Then
\begin{equation*}
N=-2\partial_x, 
\end{equation*}
and 
\begin{equation}
\triangle_{\phi}=-\left(\frac{1}{1+rx}\right)(x\partial_x)^2 +(n+1)x\partial_x+\frac{x}{2}\triangle_b
 -\frac{1}{4}\left(\frac{x^2}{1+rx}\right)(T^2+2X_r). 
\end{equation}
Write $\triangle_{\phi}=\sum_{k=0}^{\infty}x^kL_k$. Then
\begin{equation}\label{laplaceasymp.2}
L_0 =-(x\partial_x)^2+(n+1)x\partial_x, \quad 
L_1 =\frac{1}{2}\triangle_b+r_0(x\partial_x)^2, 
\end{equation}
where $r=r_0+O(x)$. This is computed in \cite{HPT} by Hislop, Perry  and Tang. 

Epstein, Melrose and Mendoza \cite{EMM} studied the spectrum and resolvent for Laplacian of more general ACH manifolds. In our paper, the metric is of Bergman type and hence is even in the sense of \cite{EMM} and \cite{GS}. 

\begin{proposition}
The spectrum of $\triangle_{\phi}$ consists of two parts: absolute continuous spectrum $\sigma_{ac}(\triangle_{\phi})$ and pure point spectrum $\sigma_{pp}(\triangle_{\phi})$, which corresponds to the $L^2$-eigenvalues. Moreover, 
$$
\sigma_{ac}(\triangle_{\phi})=\left[(n+1)^2/4, \infty\right), \quad
\sigma_{pp}(\triangle_{\phi})\subset \left(0,(n+1)^2/4\right). 
$$
The resolvent $R(s)=(\triangle_{\phi}- s(n+1-s))^{-1}$ is a bounded operator on $L^2$ for $s\in \mathbb{C}$, $\mathrm{Re} s>\frac{n+1}{2}$ and has a finite meromorphic extension to $\mathbb{C}\backslash (-\mathbb{N}_0)$ as a map: $\dot{C}^{\infty}(X)\rightarrow C^{\infty}(X)$. 
\end{proposition}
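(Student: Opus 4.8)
The plan is to realise $\triangle_{\phi}$ as a nonnegative self-adjoint operator that fits the framework of Epstein--Melrose--Mendoza \cite{EMM} (with the refinement of Vasy--Wunsch \cite{VW}), and then to read off the stated spectral and resolvent properties from the general theory. First I would note that $\triangle_{\phi}=\frac14\delta d$ is formally self-adjoint and nonnegative on $L^2(X,\mathrm{dvol}_{\phi})$, so its closure is self-adjoint with $\sigma(\triangle_{\phi})\subset[0,\infty)$, and the spectrum splits into a discrete part (the $L^2$-eigenvalues $\sigma_{pp}$) below the bottom of the essential spectrum together with a continuous part above it. The substantive point is that the near-boundary form (\ref{kahlermetric2}) exhibits $g_{\phi}$ as an ACH metric in the sense of \cite{EMM}, and because $\rho$ is a smooth defining function (Bergman type / approximate Einstein) the metric is \emph{even} in the sense of \cite{EMM} and \cite{GS}; this evenness is exactly what upgrades the meromorphic continuation from $\mathbb{C}\setminus(\tfrac{n}{2}-\mathbb{N}_0)$ to $\mathbb{C}\setminus(-\mathbb{N}_0)$.

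Next I would identify the indicial (normal) operator. From the expansion $\triangle_{\phi}=\sum_k x^k L_k$ with $L_0=-(x\partial_x)^2+(n+1)x\partial_x$ in (\ref{laplaceasymp.2}), a direct computation gives $L_0 x^{s}=s(n+1-s)x^{s}$, so the indicial roots of $\triangle_{\phi}-s(n+1-s)$ are $s$ and $n+1-s$, symmetric about $\tfrac{n+1}{2}$. Setting $s=\tfrac{n+1}{2}$ locates the spectral threshold at $(n+1)^2/4$. Comparing the indicial operator with that of the model space $(\mathbb{H}^{n+1}_{\mathbb{C}},h^{\mathbb{C}})$, whose Laplacian has purely absolutely continuous spectrum $[(n+1)^2/4,\infty)$, then identifies the essential spectrum of $\triangle_{\phi}$ as $[(n+1)^2/4,\infty)$.

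Then I would invoke the parametrix construction of \cite{EMM} in the associated boundary pseudodifferential calculus to build $R(s)$ for $\mathrm{Re}\,s>\tfrac{n+1}{2}$: for such $s$ one has $s(n+1-s)=(n+1)^2/4-(s-\tfrac{n+1}{2})^2\notin[(n+1)^2/4,\infty)$, so away from the finitely many eigenvalues $R(s)=(\triangle_{\phi}-s(n+1-s))^{-1}$ is bounded on $L^2$ by the spectral theorem. The calculus produces a parametrix whose error is compact and holomorphic in $s$, and an analytic-Fredholm argument yields the finite meromorphic extension of $R(s)$ to $\mathbb{C}\setminus(-\mathbb{N}_0)$ as a map $\dot C^{\infty}(X)\to C^{\infty}(X)$, the excluded set $-\mathbb{N}_0$ being where the two indicial roots collide modulo the integer shifts permitted by evenness. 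Absolute continuity on $[(n+1)^2/4,\infty)$ and the absence of embedded eigenvalues follow from the boundary-pairing / limiting-absorption estimates of \cite{EMM} and \cite{VW} (or a Mourre estimate generated by a multiple of $x\partial_x$). Finally $\sigma_{pp}\subset(0,(n+1)^2/4)$: eigenvalues are confined below the continuous threshold, and $0$ is excluded because an $L^2$ solution of $\triangle_{\phi}u=0$ satisfies $\frac14\int_X|du|^2_{\phi}\,\mathrm{dvol}_{\phi}=0$ (integration by parts being justified on the complete manifold via cutoffs), forcing $u$ constant and hence $u\equiv0$ since $\mathrm{vol}_{\phi}(X)=\infty$.

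The main obstacle is the resolvent construction and its meromorphic continuation, where the genuinely hard analysis lives in the parametrix of \cite{EMM}; but since that theory is available to cite, the real work here is verifying its hypotheses, namely confirming that (\ref{kahlermetric2}) gives a bona fide even ACH metric and that the normal operator is exactly $L_0$, so that the threshold, the indicial roots, and the excluded set $-\mathbb{N}_0$ come out as claimed.
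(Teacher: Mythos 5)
Your proposal is correct and follows the same route as the paper, which states this proposition without proof as a direct citation of Epstein--Melrose--Mendoza (and Vasy--Wunsch), noting only that the Bergman-type metric is even in the sense of \cite{EMM} and \cite{GS} so that the continuation reaches $\mathbb{C}\setminus(-\mathbb{N}_0)$. Your reconstruction of the indicial roots from $L_0$ and the exclusion of $0$ from $\sigma_{pp}$ fills in exactly the verification the paper delegates to the references.
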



\subsection{Scattering operators. }
For $f\in C^{\infty}(M)$ and $s\in \mathbb{C}$,  $\mathrm{Re} s>\frac{n+1}{2}$, $s(n+1-s)\notin \sigma_{pp}(\triangle_{\phi})$ and $2s-(n+1)\notin \mathbb{N}$, there exists a unique solution $u$ satisfying the equation
$$
\triangle_{\phi} u- s(n+1-s)u=0, 
$$
and having asymptotic expansion at boundary in the following form:
$$
u=(-\rho)^{n+1-s}F+(-\rho)^{s} G, \quad
F, G\in C^{\infty} (X),\ F|_{M}=f. 
$$
Hence we can define the scattering operator to be:
$$
\begin{aligned}
S(s): C^{\infty}(M)&\longrightarrow C^{\infty}(M) \\
f&\longrightarrow G|_{M}. 
\end{aligned}
$$
Guillarmou-S\'{a} Barreto \cite{GS} studied the scattering operators for more general ACH manifolds. In our setting,
\begin{proposition}
The scattering operators $S(s)$ is a family of conformally covariant pseudo-differential operators of Heisenberg class $\Psi_{\theta}^{2(2s-n-1)}(M)$
associated to the contact form $\theta=\Theta|_{M}$,
where $
\theta=\frac{\sqrt{-1}}{2}(\bar{\partial}\rho-\partial \rho)$,
with principal symbol
$$
\sigma_{pr}(S(s))=c_n\frac{2^{2s+1}\Gamma(s)^2}{\Gamma(2s-n-1)}\mathcal{F}_{V\rightarrow\xi}(\|V\|^{-4s}_{He}). 
$$
Here $\|V\|_{He}$ denotes the Heisenberg norm on $TM$
$$
\|V\|_{He}=\left(4\theta(V)^2+\frac{1}{2}d\theta(V,JV)^2\right)^{\frac{1}{4}}
$$
and $\mathcal{F}_{V\rightarrow\xi}$ denote the Fourier Transform from $TM$ to $T^*M$. 

Moreover $S(s)$ extends meromorphically to $\mathbb{C}\backslash (-\mathbb{N}_0)$.
It has at most poles of order $1$ at each $s_k=\frac{n+1+k}{2}$ with $k\in \mathbb{N}$, the residue of which is a Heisenberg differential operator in $\Psi^k_{\theta}(M)$ plus a projector appearing if and only if $s_k(n+1-s_k)\in \sigma_{pp}(\triangle_{\phi})$. At $s_{k}$, we have,
$$
\mathrm{Res}_{s_{k}} S(s) =\frac{1}{2^{2k}((k-1)!k!)} \Pi_{l=1}^k (-\triangle_b+\sqrt{-1}(k+1-2l)T), \quad \mathrm{mod}\  \Psi_{\theta}^{2k-1}(M). 
$$
\end{proposition}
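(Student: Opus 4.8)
The plan is to reduce the statement to two inputs already available: the meromorphic resolvent $R(s)$ of the previous Proposition, and the explicit model at the boundary, namely complex hyperbolic space. The finer Heisenberg-calculus data is then extracted by a parametrix comparison with that model.

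First I would construct a formal solution. The indicial operator of $\triangle_{\phi}-s(n+1-s)$ is read off from $L_0=-(x\partial_x)^2+(n+1)x\partial_x$ in (\ref{laplaceasymp.2}): applying $L_0$ to $x^{\mu}$ gives $\mu(n+1-\mu)x^{\mu}$, so the indicial roots are $\mu=s$ and $\mu=n+1-s$. One then seeks $u=x^{n+1-s}\sum_{j\ge 0}x^j f_j$ with $f_0=f$, solving the transport equations recursively using the lower-order pieces $L_1,L_2,\dots$ of $\triangle_{\phi}$. The indicial factor at step $j$ equals $-j\bigl(j-(2s-n-1)\bigr)$, so the recursion is uniquely solvable for every $j\ge 1$ precisely when $2s-(n+1)\notin\mathbb{N}$; the first genuine obstruction sits at $j=k$ with $2s-(n+1)=k$, i.e.\ at $s_k=\frac{n+1+k}{2}$. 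Cutting off this formal series by a boundary cutoff $\chi$ and correcting the resulting $\mathcal{O}(x^{\infty})$ error with $R(s)$ produces the true solution $u=x^{n+1-s}F+x^s G$, and $S(s)f=G|_M$ inherits the meromorphic continuation to $\mathbb{C}\backslash(-\mathbb{N}_0)$ directly from that of $R(s)$.

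For conformal covariance I would use that a change $\tilde{\rho}=e^F\rho$ rescales the boundary contact form by $\tilde{\theta}_0=e^f\theta_0$ (recorded in Section 2.2) while simultaneously rescaling the two normalizations $x^{n+1-s}$ and $x^s$ of the Dirichlet and scattering data; matching these yields the intertwining relation $S_{\tilde{\theta}}(s)=e^{-sf}S_{\theta}(s)e^{(n+1-s)f}$. The analytic heart is identifying $S(s)\in\Psi_{\theta}^{2(2s-n-1)}(M)$ with the stated principal symbol. Here the point is that the normal operator of $\triangle_{\phi}$ at $M$ is, by the curvature computation of Section \ref{sec.ach}, exactly the Laplacian of $(\mathbb{H}^{n+1}_{\mathbb{C}},h^{\mathbb{C}})$, whose boundary is the Heisenberg group. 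On this model the scattering operator is a Heisenberg convolution operator whose Schwartz kernel is a homogeneous distribution, and a direct Fourier computation gives $\sigma_{pr}(S(s))=c_n\frac{2^{2s+1}\Gamma(s)^2}{\Gamma(2s-n-1)}\mathcal{F}_{V\rightarrow\xi}\bigl(\|V\|_{He}^{-4s}\bigr)$. Feeding this model operator as the principal part of a Heisenberg parametrix for $R(s)$, in the calculus of Epstein--Melrose--Mendoza and Guillarmou--S\'{a} Barreto, shows that $S(s)$ is Heisenberg-pseudodifferential with this leading symbol. I expect this parametrix construction together with the explicit model computation to be the main obstacle, since it requires controlling the full Heisenberg calculus rather than just the leading indicial data.

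Finally, the poles at $s_k$ are exactly the obstruction points of the formal solution, so they are at most simple and their residues are the corresponding obstruction operators. Tracking the recursion through the factored indicial operator and using $N=-2\partial_x$ together with the form of $L_1$ and the Tanaka--Webster data on each $M_{\epsilon}$, the $k$ transport factors assemble into $\frac{1}{2^{2k}((k-1)!\,k!)}\prod_{l=1}^k\bigl(-\triangle_b+\sqrt{-1}(k+1-2l)T\bigr)$ modulo lower Heisenberg order, which is the stated differential operator in $\Psi_{\theta}^{k}(M)$. The extra projector term arises exactly when $s_k(n+1-s_k)\in\sigma_{pp}(\triangle_{\phi})$, i.e.\ when $R(s)$ itself has a pole at $s_k$, contributing the finite-rank spectral projector onto the corresponding $L^2$-eigenspace; otherwise the residue is purely differential.
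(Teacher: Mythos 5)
This proposition is not proved in the paper at all: it is quoted verbatim from the scattering theory of Guillarmou--S\'{a} Barreto \cite{GS} (built on the resolvent construction of Epstein--Melrose--Mendoza \cite{EMM}), with the residue formula coming from \cite{HPT}. So there is no in-paper argument to compare against; the relevant comparison is with the cited references. Your outline does reproduce the correct architecture of their proof: indicial roots $s$ and $n+1-s$ of $L_0$, the formal transport solution with obstruction at $2s-(n+1)=k$, correction of the $\mathcal{O}(x^{\infty})$ error by the meromorphically continued resolvent, conformal covariance by comparing the two normalizations under $\tilde{\rho}=e^{F}\rho$, and identification of the poles of $S(s)$ with the formal obstructions plus the spectral projector when $R(s)$ itself has a pole. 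Your indicial factor $-j\bigl(j-(2s-n-1)\bigr)$ and the intertwining relation $S_{\tilde{\theta}}(s)=e^{-sf}S_{\theta}(s)e^{(n+1-s)f}$ are both consistent with the conventions here.

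That said, as a proof the proposal has two genuine gaps, both located exactly where you flag ``the main obstacle.'' First, the statement that $S(s)\in\Psi^{2(2s-n-1)}_{\theta}(M)$ does not follow from knowing the normal operator of $\triangle_{\phi}$ and a model computation alone; one needs the full structure theorem for $R(s)$ as an element of the $\Theta$-pseudodifferential calculus of \cite{EMM}, so that the Poisson operator and its boundary trace land in the Heisenberg calculus with a computable symbol. Asserting ``feeding this model operator as the principal part of a Heisenberg parametrix shows $S(s)$ is Heisenberg-pseudodifferential'' is a statement of the theorem of \cite{GS}, not a derivation of it, and the constant $c_n\frac{2^{2s+1}\Gamma(s)^2}{\Gamma(2s-n-1)}$ in particular requires carrying out the Heisenberg-group Fourier computation you only name. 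Second, the residue formula: the factors $-\triangle_b+\sqrt{-1}(k+1-2l)T$ cannot be assembled from $L_1=\frac{1}{2}\triangle_b+r_0(x\partial_x)^2$ alone, since the characteristic vector field $T$ enters only at order $x^2$ through $-\frac{1}{4}\frac{x^2}{1+rx}(T^2+2X_r)$; tracking how the $k$ transport steps interleave $\triangle_b$ and $T^2$ into that particular product is a nontrivial computation (it is the content of \cite{HPT}), and ``the transport factors assemble into'' the stated product is precisely the claim to be proved. If you intend the argument to rest on \cite{GS} and \cite{HPT}, as the paper itself does, you should say so explicitly rather than present these steps as consequences of the sketch.
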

We recommend the reader to seek for more details on the Heisenberg class of pseudo-differential operators in \cite{BG} by Beals-Greiner, \cite{Ta} by Taylor and \cite{Po} by Ponge. 

For  approximate ACHE manifolds, Hislop-Perry-Tang showed that the residues at certain poles are the CR-covariant GJMS operators on the CR-boundary:
\begin{proposition}
If $[(n+1)^2-k^2]/4\notin \sigma_{pp}(\triangle_{\phi})$, the scattering operator $S(s)$, associated to an approximate ACHE metric  $g_{\phi}$ of Bergman type has single poles at $(n+1+k)/2$ with residue
$$
\mathrm{Res}_{s=\frac{n+1+k}{2}} S(s) = c_k P_{2k}
$$
where $P_{2k}$ are CR-covariant  differential operators of order $2k$ for $1\leq k\leq n+1$ and
$$
c_k=\frac{(-1)^k}{2^{2k}k!(k-1)!}. 
$$
\end{proposition}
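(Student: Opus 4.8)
The plan is to read off $S(s)$ and its residues from a formal asymptotic solution of $\triangle_{\phi}u-s(n+1-s)u=0$, following the strategy of Graham--Zworski in the real case and its ACH adaptation in \cite{GS}. Writing $x=-\rho$ and seeking $u=x^{n+1-s}F$ with $F=\sum_{j\geq 0}f_jx^j$ and $f_j\in C^{\infty}(M)$, I would expand $\triangle_{\phi}=\sum_{m\geq 0}x^mL_m$ as in (\ref{laplaceasymp.2}). Since the indicial operator $L_0=-(x\partial_x)^2+(n+1)x\partial_x$ acts on $x^{\mu}f_j$ by the scalar $\mu(n+1-\mu)$, collecting the coefficient of $x^{n+1-s+j}$ produces the recursion
$$
\bigl[(n+1-s+j)(s-j)-s(n+1-s)\bigr]f_j=\mathcal{R}_j(f_0,\dots,f_{j-1}),
$$
where $\mathcal{R}_j$ is assembled from the tangential differential operators $L_1,\dots,L_j$. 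The scalar prefactor simplifies to $j(2s-n-1-j)$, which is nonzero for $0<j<k$ whenever $s$ is near $s_k=\tfrac{n+1+k}{2}$; hence each $f_j$ is obtained by applying $L_1,\dots,L_j$ and dividing by a scalar, so $f_j$ is a \emph{differential} operator of order $2j$ applied to $f_0=f$, depending rationally on $s$.

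The next step is to isolate the resonance. At $j=k$ the prefactor $j(2s-n-1-j)$ vanishes at $s=s_k$, so the recursion cannot be closed and the series must be corrected by a logarithmic term; equivalently, the coefficient of $x^{s}$ in $u$, which defines $S(s)f$, acquires a simple pole at $s_k$ whose residue is precisely the obstruction $\mathcal{R}_k$ evaluated at $s=s_k$. Locality is automatic here: at every non-resonant step the indicial operator is inverted by division by a number, never introducing a nonlocal Green's operator, so $\mathrm{Res}_{s_k}S(s)$ is a differential operator of order $2k$.

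It then remains to check that only this local obstruction contributes to the residue. The honest solution produced by $R(s)$ differs from the formal one by a smooth, $x^{s}$-decaying remainder whose associated operator is holomorphic near $s_k$ exactly when there is no $L^2$-eigenfunction with eigenvalue $s_k(n+1-s_k)=\tfrac{(n+1)^2-k^2}{4}$, which is the hypothesis $[(n+1)^2-k^2]/4\notin\sigma_{pp}(\triangle_{\phi})$; this rules out the projector term in the residue formula of the preceding proposition and leaves $\mathrm{Res}_{s_k}S(s)=c_kP_{2k}$. CR-covariance of $P_{2k}$ follows by taking residues at $s_k$ in the conformal covariance of $S(s)$ from that proposition, yielding under $\widehat{\theta}=e^{f}\theta$ the intertwining
$$
\widehat{P}_{2k}=e^{-\frac{n+1+k}{2}f}\,P_{2k}\,e^{\frac{n+1-k}{2}f}.
$$

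Finally, the role of the approximate Einstein condition $J[\rho]=1+\mathcal{O}(\rho^{n+2})$ is to guarantee that the coefficients $L_1,\dots,L_k$, and hence $\mathcal{R}_k$, are built only from CR-invariant boundary data for all $k\leq n+1$, so the resulting operators are the CR--GJMS operators. The constant $c_k$ is pinned down by matching the leading term of $\mathcal{R}_k$ against the product $\frac{1}{2^{2k}(k-1)!k!}\Pi_{l=1}^k(-\triangle_b+\sqrt{-1}(k+1-2l)T)$ of the preceding proposition, or equivalently by the explicit model computation on $(\mathbb{H}^{n+1}_{\mathbb{C}},h^{\mathbb{C}})$. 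I expect the main obstacle to be the careful verification that the approximate Einstein condition keeps $\mathcal{R}_k$ both local and CR-invariant up to the top order $k=n+1$, together with the Gamma-function bookkeeping that produces the sign $(-1)^k$ in $c_k$.
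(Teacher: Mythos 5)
This proposition is not proved in the paper at all: it is quoted verbatim from Hislop--Perry--Tang \cite{HPT} (with the meromorphic structure of $S(s)$ and the general residue formula taken from Guillarmou--S\'a Barreto \cite{GS}), so there is no in-paper argument to compare against. Your sketch reconstructs the standard Graham--Zworski-type proof that those references use, and its skeleton is sound: the indicial computation $L_0x^{\mu}=\mu(n+1-\mu)x^{\mu}$, the prefactor $j(2s-n-1-j)$ in the formal recursion, the collision of the two indicial series at $j=k$ when $s=s_k=\tfrac{n+1+k}{2}$, the locality of the obstruction because every non-resonant step inverts a scalar, the role of the hypothesis $[(n+1)^2-k^2]/4\notin\sigma_{pp}(\triangle_{\phi})$ in excluding the finite-rank projector from the residue, and the covariance $\widehat{P}_{2k}=e^{-\frac{n+1+k}{2}f}P_{2k}e^{\frac{n+1-k}{2}f}$ obtained by taking residues in the covariance of $S(s)$ are all correct and are exactly the ingredients of the cited proof. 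Two points deserve more care than your sketch gives them. First, in the ACH setting the natural expansion variable is $x^{1/2}$ rather than $x$, and the absence of half-integer steps in your series is not automatic: it is the evenness of the Bergman-type metric (smoothness of $\rho$, as the paper notes following \cite{EMM} and \cite{GS}) that confines the $L_m$ to integer powers of $x$ and keeps the resonant set at $s_k=\tfrac{n+1+k}{2}$ with integer gap $k$; without saying this the recursion as written is incomplete. Second, the tangential operators entering $\mathcal{R}_k$ are not built from $\triangle_b$ alone but also from $T^2$ and $X_r$ (visible already in the paper's formula (\ref{laplaceasymp.1})), which is why the principal part of the residue is the product $\Pi_{l=1}^k(-\triangle_b+\sqrt{-1}(k+1-2l)T)$ rather than a power of $\triangle_b$; your appeal to "matching the leading term" is the right move but is where the real Gamma-function and combinatorial bookkeeping for $c_k=\tfrac{(-1)^k}{2^{2k}k!(k-1)!}$ lives, and it is carried out in \cite{GS} and \cite{HPT} rather than being a routine check.
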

For simplicity, we define the renormalised scattering operators by
\begin{equation}\label{rescatteringop}
P_{2\gamma}=2^{2\gamma}\frac{\Gamma(\gamma)}{\Gamma(-\gamma)} S\left(\frac{n+1+\gamma}{2}\right)
\end{equation}
and the fractional Q-curvature by
\begin{equation*}
Q_{2\gamma}=P_{2\gamma}1. 
\end{equation*}
In particular, when $\gamma=1$, 
\begin{equation*}
P_2=\triangle_b+\frac{n}{2(n+1)} R_{\theta}, \quad Q_2=\frac{n}{2(n+1)}R_{\theta}
\end{equation*}
where $R_{\theta}$ is the Webster scalar curvature and $P_2$ is the CR-Yamabe operator of Jerison and Lee \cite{JL}.


\vspace{0.2in}
\section{Nonegative CR-Boundary}
Throughout this section, we assume (A1)-(A2) and
\begin{itemize}
\item[\textbf{(A3)}] The boundary defining function $\rho$  is a global (approximate) solution to the Monge-Amp\`{e}re equation (see Definition \ref{def.app});
\item[\textbf{(A4)}] $Ric_{\phi}\geq -2(n+2)g_{\phi}$ in $\mathring{X}$,  where $g_{\phi}$ is the induced K\"{a}hler metric by  the K\"{a}hler form $\omega_{\phi}=\frac{\sqrt{-1}}{2}\partial \overline{\partial}\phi$ with $\phi=-\log (-\rho)$. 
\end{itemize}
Then  $g_{\phi}$  is (approximate) ACHE and the one-form $\Theta=\frac{\sqrt{-1}}{2}(\overline{\partial}\rho-\partial \rho)$ induces a pseudo-Hermitian structure which is pseudo-Einstein on the boundary $M$. 

\subsection{Preliminary Computation}
First we show that (approximate) ACHE solution forces a compatibility relation between the Webster scalar curvature and the transverse curvature at boundary. More explicitly, 
\begin{lemma} \label{lem.3.1} Assumption (A3) implies that the Webster scalar curvature $R_{\theta}$ and the transverse curvature $r$ satisfy
\begin{equation*}
r= \frac{1}{n(n+1)}R_{\theta}+\mathcal{O}(\rho). 
\end{equation*}
\end{lemma}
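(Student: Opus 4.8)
The plan is to reduce the statement to a computation at the boundary in the Graham--Lee adapted frame, and then to read off the Webster scalar curvature from the first transverse derivative of the adapted volume density; assumption (A3) enters only through the vanishing of the $\rho^0$ and $\rho^1$ coefficients of $\log J[\rho]$, so the full strength $\mathcal O(\rho^{n+2})$ is not needed.

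First I would express $J[\rho]$ in the adapted frame $\{W_1,\dots,W_n,\Xi\}$ with dual $(1,0)$-coframe $\{\theta^1,\dots,\theta^n,\partial\rho\}$. Because $\partial\rho(\Xi)=1$ and $\Xi\righthalfcup\partial\overline{\partial}\rho=r\,\overline{\partial}\rho$, the matrix of $\partial\overline{\partial}\rho$ in this frame is block-diagonal, equal to $\mathrm{diag}(h_{\alpha\bar\beta},r)$, and the scalar $|d\rho|^2_\rho$ equals $1/r$. Substituting into (\ref{eq.2.9}) and recording the Jacobian $\det A$ of the change of coframe from $\{dz^i\}$ to $\{\theta^\alpha,\partial\rho\}$, I obtain
$$
J[\rho]=(1-r\rho)\,|\det A|^2\det(h_{\alpha\bar\beta}).
$$
Writing $v=|\det A|^2\det(h_{\alpha\bar\beta})$ and taking logarithms, assumption (A3) forces $\log(1-r\rho)+\log v=\mathcal O(\rho^{n+2})$. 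Matching the constant term gives the volume-normalisation $v|_M=1$ (i.e.\ the chosen holomorphic coordinates are volume-normalised for $\theta$), and matching the $\rho^1$ term gives the key intermediate identity $r|_M=\partial_\rho(\log v)|_M$.

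It remains to evaluate $\partial_\rho(\log v)|_M$ in terms of the boundary CR data. This is where the real content lies: the Monge--Amp\`ere constraint (through Fefferman's construction) fixes the transverse Taylor expansion of $\rho$, hence the normal variation of both the Levi matrix $h_{\alpha\bar\beta}$ and the coframe $\{\theta^\alpha\}$ along $N=2\partial_\rho$. Using the Graham--Lee ambient connection and its structure equation to track these variations through the connection $1$-forms $\rho_\beta^{\ \alpha}$ and the transverse curvature, the first normal derivative of the density $v$ is governed by the trace over $\mathcal H$ of the pseudo-Hermitian curvature, which is precisely the Webster scalar curvature. Carrying out the trace with the correct normalisation should give $\partial_\rho(\log v)|_M=\frac{1}{n(n+1)}R_\theta$, and combined with $r|_M=\partial_\rho(\log v)|_M$ this yields $r=\frac{1}{n(n+1)}R_\theta+\mathcal O(\rho)$.

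The main obstacle is this last identification. Extracting $R_\theta$ from the first normal derivative of $v$ requires differentiating the structure equation of the ambient connection, invoking integrability and the definition of the Tanaka--Webster scalar curvature, and tracking the numerical constant $1/(n(n+1))$ through the conventions $g_{i\bar j}=\tfrac12\phi_{i\bar j}$ and $\theta=\Theta|_M$. I expect the bulk of the effort to be this curvature bookkeeping; alternatively one may invoke the relevant asymptotic expansion of the Monge--Amp\`ere solution from Graham--Lee \cite{GL} or Hislop--Perry--Tang \cite{HPT} to shortcut the computation.
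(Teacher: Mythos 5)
Your frame computation is fine: in the adapted coframe $\partial\overline{\partial}\rho=\mathrm{diag}(h_{\alpha\bar\beta},r)$, $|d\rho|^2_{\rho}=1/r$, and hence $J[\rho]=(1-r\rho)\,v$ with $v=|\det A|^2\det(h_{\alpha\bar\beta})$; matching the $\rho^0$ and $\rho^1$ coefficients of $\log J[\rho]=\mathcal{O}(\rho^{n+2})$ does give $v|_M=1$ and $r|_M=\partial_\rho(\log v)|_M$. (This is equivalent to the identity $\det H(\rho)/J[\rho]=r/(1-r\rho)$ used in the paper.) But the proof then stops exactly where the content of the lemma begins: the identification $\partial_\rho(\log v)|_M=\frac{1}{n(n+1)}R_\theta$ is only asserted (``should give'', ``I expect the bulk of the effort to be this curvature bookkeeping''), and the route you sketch for it cannot work as stated. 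The quantity $\partial_\rho(\log v)|_M=\partial_\rho\log\det H(\rho)|_M-\partial_\rho\log r|_M$ involves third-order transverse derivatives of $\rho$ and is \emph{not} determined by the boundary CR data $(h_{\alpha\bar\beta},\theta)$ for a general defining function; its relation to $R_\theta$ is precisely what the Monge--Amp\`ere condition supplies. So an ``independent'' evaluation from the Graham--Lee structure equations alone is either impossible or circular, and re-invoking the Monge--Amp\`ere constraint to fix the transverse jet (as you hint) is exactly the unperformed computation.

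The paper closes this gap differently and more directly: it quotes the Li--Luk explicit formula for the Webster pseudo-Ricci curvature of a real hypersurface,
$\mathrm{Ric}_{\theta}(W,\overline V)=-\bigl(\partial\overline{\partial}\log J[\rho]\bigr)(W,\overline V)+(n+1)\frac{\det H(\rho)}{J[\rho]}\,L_{\theta}(W,\overline V)$,
which holds for an arbitrary defining function. Under (A3) the first term vanishes on $\mathcal{H}$ at $M$ (only $\log J[\rho]=\mathcal{O}(\rho^{2})$ is needed here, consistent with your observation), so tracing over $\mathcal{H}$ gives $R_{\theta}=n(n+1)\det H(\rho)/J[\rho]$ at the boundary, and the elementary frame identity $\det H(\rho)/J[\rho]=e^{\phi}(1-|d\phi|^2_{\phi})=r/(1-r\rho)=r+\mathcal{O}(\rho)$ finishes the proof. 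To repair your argument you should either import the Li--Luk formula (or an equivalent hypersurface curvature identity) as the bridge between $R_\theta$ and the density ratio, or actually carry out the pseudo-Hermitian curvature computation you defer; as written, the lemma's conclusion has not been established.
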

\begin{proof}
Since $\rho$ is a global (approximate) Monge-Amp\`{e}re solution, then for any $p$ in $M$, there exists a local holomorphic chart $\{z^i: i=1,...,n+1\}$ such that
\begin{equation*}
J[\rho]=1+\mathcal{O}(\rho^{n+2}). 
\end{equation*}
According to the formulae given by Li and Luk \cite{LL}, the Webster pseudo-Ricci curvature on $M$ is given by 
\begin{equation*}
\mathrm{Ric}_{\theta}\left(W, \overline{V}\right)
=-\left(\partial \overline{\partial}\log J[\rho]\right)(W, \overline{V}) 
+(n+1)\frac{\det H(\rho)}{J[\rho]}  L_{\theta}\left(W, \overline{V}\right). 
\end{equation*}
Hence
\begin{equation*}
R_{\theta}=n(n+1)\frac{\det H(\rho)}{J[\rho]}\vline_M. 
\end{equation*}
Recall that $\det H(\rho)=(-\rho)^{n+1}(1-|d\phi|^2_{\phi})\det H(\phi)$ and $J[\rho]=e^{-\phi}(-\rho)^{n+1}\det H(\phi)$. It is easy to see that near boundary
\begin{equation*}
\frac{\det H(\rho)}{J[\rho]} 
=e^{\phi}(1-|d\phi|^2_{\phi})=\frac{r}{1-r\rho} = r+\mathcal{O}(\rho). 
\end{equation*}
\end{proof}

Notice that (A3) also implies that at $\rho=0$ the boundary pseudo-Hermitian structure is pseudo-Einstein. 

\begin{lemma}\label{lem.3.2}
Assume (A1)-(A4). If the Webster scalar curvature $R_{\theta}\geq 0$ on $M$ then in $\mathring{X}$, 
$$
I=e^{\phi}\left(1-|d\phi|^2_{\phi}\right)>0. 
$$
\end{lemma}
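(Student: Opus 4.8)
The plan is to show that the function $I=e^{\phi}(1-|d\phi|^2_{\phi})$ is superharmonic for $\triangle_{\phi}$ and then invoke the strong minimum principle, using that $I$ extends smoothly to $\partial X=M$ with $I|_M=\frac{R_{\theta}}{n(n+1)}\ge 0$ by Lemma \ref{lem.3.1} (recall $I=\frac{r}{1-r\rho}$). First I would record two elementary identities. Since $\phi$ is a K\"ahler potential, $\phi_{i\bar j}=2g_{i\bar j}$ gives $\triangle_{\phi}\phi=-(n+1)$, and a short computation from $I=e^{\phi}(1-|d\phi|^2_{\phi})$ yields the clean relation $\triangle_{\phi}e^{\phi}=I-(n+2)e^{\phi}$. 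Writing $f=|d\phi|^2_{\phi}$, the second ingredient is a Bochner-type formula: using $\nabla_{\bar j}\phi_i=\phi_{i\bar j}$, commuting covariant derivatives and contracting the curvature into the Ricci tensor, I expect
$$
\triangle_{\phi}f=-(n+1)-|\nabla_i\nabla_j\phi|^2_{\phi}-R_{i\bar j}\phi^i\phi^{\bar j},
$$
whose precise constant and sign I would pin down on the model $\mathbb{H}^{n+1}_{\mathbb{C}}$, where $I\equiv 1$ and every term is explicit.

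Combining these through the product rule for $\triangle_{\phi}(e^{\phi}f)$ and simplifying with $I=e^{\phi}(1-f)$, I anticipate the key identity
$$
\triangle_{\phi}I=e^{\phi}\left|\nabla_i\nabla_j\phi+\phi_i\phi_j\right|^2_{\phi}+e^{\phi}\left(R_{i\bar j}\phi^i\phi^{\bar j}+(n+2)f\right).
$$
The crucial algebraic observation is that the Hessian contributions assemble into a perfect square: expanding $|\nabla_i\nabla_j\phi+\phi_i\phi_j|^2_{\phi}$ produces exactly $|\nabla_i\nabla_j\phi|^2_{\phi}+f^2+2\mathrm{Re}(\nabla_i\nabla_j\phi\,\phi^i\phi^j)$, which matches the cross terms arising from $\nabla_{\bar l}f=\phi_{\bar l}+\phi^{\bar j}\nabla_{\bar j}\phi_{\bar l}$ (here one uses $\phi_{i\bar j}\phi^i\phi^{\bar j}=f$). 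Assumption (A4), $Ric_{\phi}\ge -2(n+2)g_{\phi}$, reads in components $R_{i\bar j}\ge -(n+2)\phi_{i\bar j}$ and hence $R_{i\bar j}\phi^i\phi^{\bar j}\ge -(n+2)f$; so both terms on the right are nonnegative and $\triangle_{\phi}I\ge 0$ throughout $\mathring{X}$.

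It then remains to upgrade subharmonicity to strict positivity. Since $\triangle_{\phi}I\ge 0$ means $I$ is superharmonic for the elliptic operator $\phi^{i\bar j}\partial_i\partial_{\bar j}$, the strong minimum principle forces $I$ to attain its minimum on $M$ unless $I$ is constant; as $I\in C^{\infty}(X)$ is continuous up to the compact boundary with $\min_M I=\min_M\frac{R_{\theta}}{n(n+1)}\ge 0$, this gives $I(p)>\min_M I\ge 0$ at every interior point $p$. The only remaining possibility is $I\equiv c$; if $c\le 0$ then $c=I|_M$ forces $c=0$, i.e.\ $I\equiv 0$ and $\det H(\rho)\equiv 0$, contradicting $J[\rho]=(|d\rho|^2_{\rho}-\rho)\det H(\rho)=1+\mathcal{O}(\rho^{n+2})\neq 0$ near $M$ (Assumption (A3)). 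Hence $I>0$ in $\mathring{X}$.

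The main obstacle is the Bochner computation of the second paragraph: fixing the exact constant and curvature term in $\triangle_{\phi}f$, and then recognising that the Hessian terms organise into the single square $|\nabla_i\nabla_j\phi+\phi_i\phi_j|^2_{\phi}$, which is precisely what lets (A4) be applied termwise to reach $\triangle_{\phi}I\ge 0$. By comparison, the boundary regularity of $I$, the identification $I|_M=\frac{R_{\theta}}{n(n+1)}$, and the exclusion of the degenerate constant solution $I\equiv 0$ are routine.
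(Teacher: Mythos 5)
Your main computation is exactly the paper's: the identity
$\triangle_{\phi}I=e^{\phi}\left|\phi_{k;i}+\phi_k\phi_i\right|^2_{\phi}+e^{\phi}\left(R_{k\bar j}\phi^k\phi^{\bar j}+(n+2)|d\phi|^2_{\phi}\right)$
is precisely what the proof (following Li--Wang) establishes, with (A4) entering as $R_{k\bar j}\phi^k\phi^{\bar j}\ge -(n+2)\phi_{k\bar j}\phi^k\phi^{\bar j}$, and the boundary identification $I|_M=\frac{1}{n(n+1)}R_{\theta}$ and the strong minimum principle are used in the same way. So the heart of your argument is sound and matches the paper.

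The genuine gap is your exclusion of the degenerate case $I\equiv 0$, which you dismiss as routine. You argue that $I\equiv 0$ forces $\det H(\rho)\equiv 0$, ``contradicting $J[\rho]=(|d\rho|^2_{\rho}-\rho)\det H(\rho)=1+\mathcal{O}(\rho^{n+2})$.'' But the identity $J[\rho]=(|d\rho|^2_{\rho}-\rho)\det H(\rho)$ presupposes that $H(\rho)$ is invertible (otherwise $\rho^{i\bar j}$ and hence $|d\rho|^2_{\rho}$ are undefined), and it degenerates to $0\cdot\infty$ exactly in the situation you are trying to rule out: one always has $J[\rho]=e^{-(n+2)\phi}\det H(\phi)>0$ by (A2), while $\det H(\rho)=(-\rho)^{n+1}(1-|d\phi|^2_{\phi})\det H(\phi)$ vanishes identically when $|d\phi|^2_{\phi}\equiv 1$. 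Since $H(\rho)$ is a rank-one perturbation of the positive matrix $(-\rho)H(\phi)$, it has corank at most one, and the bordered determinant $J[\rho]$ can be (and is) strictly positive even when $\det H(\rho)=0$ --- already in complex dimension one, $\rho_{z\bar z}=0$ gives $J[\rho]=|\rho_z|^2\neq 0$. So there is no contradiction where you claim one. The paper rules out $I\equiv 0$ by a separate integral argument: if $1-|d\phi|^2_{\phi}\equiv 0$, integration by parts gives $0=\int_X e^{-t\phi}[1-(t-n-1)\phi]\,\mathrm{dvol}_{\phi}$ for all large $t$, which upon letting $t\to\infty$ and integrating in $t$ forces $\phi\equiv+\infty$, an absurdity. (A quicker fix is available: $\rho$ attains a negative interior minimum at some $p_0\in\mathring{X}$, where $d\rho(p_0)=0$, hence $d\phi(p_0)=0$ and $I(p_0)=e^{\phi(p_0)}>0$, so $I\not\equiv 0$.) Either way, this step needs an actual argument; as written, your proof of the case $\min_M R_{\theta}=0$ is incomplete.
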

\begin{proof}
First we recall some computation by Li-Wang in \cite{LW} to prove the case of $R_{\theta}>0$. In local holomorphic coordinates $\{z^i: i=1,...,n+1\}$, using $-\triangle_{\phi}\phi=\phi^{i\bar{j}}\phi_{i\bar{j}}=n+1$, we have 
$$
-\triangle_{\phi}(|d\phi|^2_{\phi})=
\phi^{i\bar{j}}(|d\phi|^2_{\phi})_{i\bar{j}} = R_{k\bar{j}}\phi^{k}\phi^{\bar{j}} +n+1+\phi_{k;i}\phi^{k;i}. 
$$
Here we use "$;$" to denote the covariant derivative w.r.t. metric $g_{\phi}$ and
$$
\phi^{k;i}=\phi^{k\bar{l}}\phi^{i\bar{j}}\phi_{\bar{l};\bar{j}}, \quad \phi^k=\phi^{k{\bar{l}}}\phi_{\bar{l}},
\quad \phi^{\bar{j}}=\phi^{i\bar{j}}\phi_{i}. 
$$ 
Then by Cauchy-Schwarz inequality and assumption (A4), we have
$$
\begin{aligned}
\triangle_{\phi}I=&\ 
-\phi^{i\bar{j}}\left\{e^{\phi}(\phi_i\phi_{\bar{j}}+\phi_{i\bar{j}})(1-|d\phi|^2_{\phi}) - e^{\phi}[ (|d\phi|^2_{\phi})_i\phi_{\bar{j}}+(|d\phi|^2_{\phi})_{\bar{j}}\phi_i] -e^{\phi}(|d\phi|^2_{\phi})_{i\bar{j}}
\right\}
\\
\geq &\ e^{\phi}\left[ |d\phi|^4_{\phi} +\phi_{k;i}\phi^{k}\phi^i+\phi^{k;i}\phi_k\phi_i +\phi_{k;i}\phi^{k;i}
\right]
\\
\geq&\  0. 
\end{aligned}
$$
From the proof of Lemma \ref{lem.3.1}, 
$$
I|_{M}=r|_{M}=\frac{1}{n(n+1)}R_{\theta}\geq 0. 
$$ 
Applying  the strong maximum principle and we get $I\geq 0$ all over $X$. Moreover, if $I$ attains minimum $0$  in the interior $\mathring{X}$, then $I$ is identically 0 all over $X$. Next, we show that it can not be true if $I\equiv 0$ on $X$. Otherwise, $1-|d\phi|^2_{\phi}\equiv 0$ on $X$. For all sufficiently large $t>0$, integrating by parts shows that
$$
0\equiv \int_X e^{-t\phi}(1-|d\phi|^2_{\phi}) \mathrm{dvol}_{\phi}=\int_Xe^{-t\phi} [1-(t-n-1)\phi]  \mathrm{dvol}_{\phi}\triangleeq F(t). 
$$
Taking $t\rightarrow\infty$, we have  $\mu(\{\phi\leq  0\})=0$ where $\mu$ is the measure defined by the volume form
$$
\mathrm{dvol}_{\phi} =(2w_{\phi})^{n+1}=(\sqrt{-1})^{n+1}\det H(\phi) dz^1\wedge d\bar{z}^1 \wedge \cdots \wedge dz^{n+1}\wedge d\bar{z}^{n+1}.
$$ 
Then choose $t_0$ sufficiently large and 
$$
0=\int_{t_0}^{\infty} F(t)dt= -\int_X e^{-t_0\phi} (t_0-n-1) \mathrm{dvol}_{\phi}. 
$$
Hence $\phi \equiv +\infty$ on $X$. However, it can not be true and we finish the proof. 
\end{proof}

Notice that if $R_{\theta}$ is strictly positive, $I>0$ is proved in \cite{LW}. 

\begin{lemma}\label{lem.3.3}
Assume (A1)-(A4) and  the Webster scalar curvature $R_{\theta}\geq 0$ on $M$. 
\begin{itemize}
\item[(a)] If $R_{\theta}>0$ on $M$,  then $\rho$ is plurisubharmonic all over $X$. 
\item[(b)] If $R_{\theta}\geq 0$  on $M$, then $\rho$ is plurisubharmonic in $\mathring{X}$. 
\end{itemize}
\end{lemma}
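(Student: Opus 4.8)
The plan is to reduce plurisubharmonicity of $\rho$ to two positivity statements already in hand: the interior sign of $I=e^{\phi}(1-|d\phi|^2_{\phi})$ from Lemma \ref{lem.3.2}, and the boundary value of the transverse curvature $r$ from Lemma \ref{lem.3.1}. Recall that by assumption (A2) the Hessian $H(\phi)$ is positive definite throughout $\mathring{X}$, and by the Lemma stated just after (\ref{eq.2.5}), once $H(\phi)>0$ the Hessian $H(\rho)$ is positive definite precisely when $|d\phi|^2_{\phi}<1$. Thus in the interior, plurisubharmonicity of $\rho$ is equivalent to the strict inequality $|d\phi|^2_{\phi}<1$.

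First I would dispatch part (b). Since $R_{\theta}\ge 0$ on $M$, Lemma \ref{lem.3.2} gives $I=e^{\phi}(1-|d\phi|^2_{\phi})>0$ throughout $\mathring{X}$; as $e^{\phi}>0$ this is exactly $|d\phi|^2_{\phi}<1$ in $\mathring{X}$. Combining with (A2) and the equivalence above, $H(\rho)$ is positive definite in $\mathring{X}$, so $\rho$ is strictly, hence in particular, plurisubharmonic in $\mathring{X}$, which is the assertion of (b). Note that this argument degenerates at $M$, where $\phi\to+\infty$ and $|d\phi|^2_{\phi}$ ceases to be a usable quantity, so this step alone cannot reach the boundary.

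For part (a) the interior is covered by the reasoning just given, since the hypothesis $R_{\theta}>0$ is a special case of $R_{\theta}\ge 0$, and the only remaining task is the boundary $M$ itself. Here I would switch from $|d\phi|^2_{\phi}$ to the transverse curvature $r$, which is smooth up to $M$. In the frame $\{W_{\alpha},\Xi\}$ the decomposition $\partial\overline{\partial}\rho=h_{\alpha\bar{\beta}}\theta^{\alpha}\wedge\theta^{\bar{\beta}}+r\,\partial\rho\wedge\overline{\partial}\rho$, together with strict pseudoconvexity (A1) giving $[h_{\alpha\bar{\beta}}]>0$ and the vanishing of the cross terms (since $\partial\rho(W_{\alpha})=W_{\alpha}\rho=0$ on the level sets, using $\Xi\righthalfcup\partial\overline{\partial}\rho=r\overline{\partial}\rho$), shows that near $M$ the complex Hessian $H(\rho)$ is positive definite if and only if $r>0$. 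By Lemma \ref{lem.3.1}, $r|_{M}=\tfrac{1}{n(n+1)}R_{\theta}>0$ under the hypothesis $R_{\theta}>0$, so by continuity $r>0$ in a collar neighbourhood of $M$, whence $H(\rho)>0$ there, including on $M$. Together with the interior this yields $\rho$ plurisubharmonic on all of $X$.

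The main obstacle is precisely the boundary: the clean interior criterion $|d\phi|^2_{\phi}<1$ is useless at $M$ because $\phi$ blows up, so the proof must change variables to the transverse curvature $r$ and rely on the compatibility $r|_{M}=\tfrac{1}{n(n+1)}R_{\theta}$ from Lemma \ref{lem.3.1}. This is also where the two cases genuinely diverge: for $R_{\theta}\ge 0$ one only obtains $r|_{M}\ge 0$, which continuity cannot propagate into a neighbourhood, so (b) must stop in the interior, whereas the strict sign $R_{\theta}>0$ in (a) survives under continuity and carries positivity of the Hessian up to and across the boundary.
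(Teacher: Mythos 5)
Your proof is correct and follows essentially the same route as the paper: part (b) is exactly the paper's argument (Lemma \ref{lem.3.2} gives $I>0$, hence $|d\phi|^2_{\phi}<1$, hence $H(\rho)>0$ given $H(\phi)>0$), and for part (a) you merely make explicit the boundary step that the paper compresses into a citation of Li--Wang, using the block decomposition $\partial\overline{\partial}\rho=h_{\alpha\bar{\beta}}\theta^{\alpha}\wedge\theta^{\bar{\beta}}+r\,\partial\rho\wedge\overline{\partial}\rho$ together with $r|_{M}=\tfrac{1}{n(n+1)}R_{\theta}>0$ from Lemma \ref{lem.3.1}, where the paper instead reads the same positivity off the identity $\det H(\rho)=e^{\phi}(1-|d\phi|^2_{\phi})J[\rho]$. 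Both hinge on the same two inputs (Lemmas \ref{lem.3.1} and \ref{lem.3.2}), so there is nothing to correct.
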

\begin{proof} Part (a) is proved by Li and Wang \cite{LW}. In local holomorphic coordinates $\{z^i: i=1,...,n+1\}$, recall that
$$
\rho_{i\bar{j}}=(-\rho)(\phi_{i\bar{j}}-\phi_i\phi_j). 
$$
By assumption (A2), $H(\phi)$ is positive definite, so $H(\rho)$ is positive definite if and only if $\det H(\rho)>0$. By direct computation,
$$
\det H(\rho)=(-\rho)^{n+1}(1-|d\phi|^2_{\phi}) \det H(\phi)=e^{\phi}\left(1-|d\phi|^2_{\phi}\right)J[\rho]. 
$$
Using Lemma \ref{lem.3.2}, we prove both (a) and (b). 
\end{proof} 
%

\subsection{Test Functions} For simplicity, in the following of the paper, we always denote
$$
I=e^{\phi}\left(1-|d\phi|^2_{\phi}\right) =\frac{1}{|d\rho|^2_{\rho}-\rho}. 
$$

Define
\begin{equation}\label{test.1}
v_s=(-\rho)^{n+1-s}=e^{-(n+1-s)\phi}. 
\end{equation}

\begin{lemma}\label{lem.3.4}
Assume (A1)-(A4). 
If the Webster scalar curvature $R_{\theta}\geq 0$  on $M$, then in $\mathring{X}$
$$
\frac{\triangle_{\phi}v_s}{v_s}-s(n+1-s)=(n+1-s)^2(-\rho)I>0
$$
for all $s\in \mathbb{R}\backslash \{n+1\}$. 
\end{lemma}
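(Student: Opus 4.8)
The plan is to compute $\triangle_{\phi}v_s$ directly, exploiting the explicit form $v_s=e^{-(n+1-s)\phi}$. Writing $a=n+1-s$ for brevity, I would differentiate twice in local holomorphic coordinates, obtaining $(v_s)_i=-a\phi_i e^{-a\phi}$ and then $(v_s)_{i\bar{j}}=e^{-a\phi}\left(a^2\phi_i\phi_{\bar{j}}-a\phi_{i\bar{j}}\right)$. Applying $\triangle_{\phi}=-\phi^{i\bar{j}}\partial_i\partial_{\bar{j}}$ and contracting, I would use the two identities $\phi^{i\bar{j}}\phi_{i\bar{j}}=n+1$ (equivalently $-\triangle_{\phi}\phi=n+1$, the same fact already invoked in the proof of Lemma \ref{lem.3.2}) and $\phi^{i\bar{j}}\phi_i\phi_{\bar{j}}=|d\phi|^2_{\phi}$. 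This yields $\triangle_{\phi}v_s/v_s=a(n+1)-a^2|d\phi|^2_{\phi}$.

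The next step is purely algebraic. Subtracting $s(n+1-s)=a(n+1-a)$ from the expression above, the two terms $a(n+1)$ cancel, and what remains collapses to $a^2-a^2|d\phi|^2_{\phi}=(n+1-s)^2\left(1-|d\phi|^2_{\phi}\right)$. To match the stated form, I would then identify $1-|d\phi|^2_{\phi}$ with $(-\rho)I$: since $\phi=-\log(-\rho)$ gives $-\rho=e^{-\phi}$, and $I=e^{\phi}\left(1-|d\phi|^2_{\phi}\right)$ by the definition fixed just before the lemma, one has $(-\rho)I=1-|d\phi|^2_{\phi}$. This produces exactly the claimed identity $\tfrac{\triangle_{\phi}v_s}{v_s}-s(n+1-s)=(n+1-s)^2(-\rho)I$.

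For strict positivity I would invoke Lemma \ref{lem.3.2}: under the hypothesis $R_{\theta}\geq 0$ on $M$, one has $I>0$ throughout $\mathring{X}$. Since $-\rho>0$ in the interior and $(n+1-s)^2>0$ for every $s\in\mathbb{R}\setminus\{n+1\}$, the product is strictly positive, which closes the argument.

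The point to emphasize is that there is essentially no genuine obstacle internal to this lemma: it is a short two-line computation followed by a rewriting of $1-|d\phi|^2_{\phi}$. All the real work—the strong maximum principle together with the integration-by-parts argument ruling out $I\equiv 0$—is already carried out in Lemma \ref{lem.3.2}, on which the positivity assertion rests. The only minor care needed is bookkeeping the sign conventions $\triangle_{\phi}=-\phi^{i\bar{j}}\partial_i\partial_{\bar{j}}$ and $-\rho=e^{-\phi}$ so that the final identity comes out with the stated factor $(-\rho)I$ rather than its reciprocal.
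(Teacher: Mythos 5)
Your computation is correct and is essentially identical to the paper's own proof: both differentiate $v_s=e^{-(n+1-s)\phi}$ twice, contract with $-\phi^{i\bar j}$ using $\phi^{i\bar j}\phi_{i\bar j}=n+1$ and $\phi^{i\bar j}\phi_i\phi_{\bar j}=|d\phi|^2_{\phi}$, and conclude positivity from Lemma \ref{lem.3.2}. The only cosmetic difference is that the paper leaves the answer as $(n+1-s)^2e^{-(n+1-s)\phi}\bigl(1-|d\phi|^2_{\phi}\bigr)$ rather than rewriting $1-|d\phi|^2_{\phi}=(-\rho)I$ explicitly, which you do correctly.
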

\begin{proof}
Recall that in local holomorphic coordinates $\{z^i: i=1,...,n+1\}$, $\triangle_{\phi}v=-\phi^{i\bar{j}}v_{i\bar{j}}$. 
Hence
$$
\begin{aligned}
&\ \triangle_{\phi}v_s-s(n+1-s)v_s\\
=&\ 
-\phi^{i\bar{j}} e^{-(n+1-s)\phi}\left(-(n+1-s)\phi_{i\bar{j}}+(n+1-s)^2\phi_i\phi_{\bar{j}}\right)
-s(n+1-s)e^{-(n+1-s)\phi}
\\
=&\ (n+1-s)^2e^{-(n+1-s)\phi}\left(1-|d\phi|^2_{\phi}\right)>0
\end{aligned}
$$
in $\mathring{X}$ by Lemma \ref{lem.3.2}.
\end{proof}

Define
\begin{equation}\label{test.2}
w_K=(-\rho)^{\frac{n+1}{2}} \left[ -\log(-\rho)+K\right]
=e^{-\frac{n+1}{2}\phi}\left(\phi+K\right). 
\end{equation}

\begin{lemma}\label{lem.3.5}
Assume (A1)-(A4). 
If the Webster scalar curvature $R_{\theta}\geq 0$ on $M$, then for sufficiently large $K>0$, $w_K$ is positive and in $\mathring{X}$,
$$
\frac{\triangle_{\phi}w_K}{w_K}-\frac{(n+1)^2}{4}>0. 
$$
\end{lemma}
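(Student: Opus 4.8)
The plan is to mimic the computation used for $v_s$ in Lemma \ref{lem.3.4}, but now applied to the more delicate test function $w_K$, which carries a logarithmic factor $\phi+K$. First I would write $w_K=e^{-\frac{n+1}{2}\phi}(\phi+K)$ and apply $\triangle_\phi=-\phi^{i\bar j}\partial_i\partial_{\bar j}$ directly in holomorphic coordinates. The function $w_K$ is a product of the critical-exponent factor $e^{-\frac{n+1}{2}\phi}$ (which is exactly $v_s$ at $s=\frac{n+1}{2}$) and the slowly growing factor $\phi+K$, so I would expand $\partial_i\partial_{\bar j}w_K$ by the Leibniz rule, keeping track of the three types of terms: those where both derivatives hit $e^{-\frac{n+1}{2}\phi}$, the mixed terms, and the term where both hit $\phi+K$. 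Using $-\triangle_\phi\phi=\phi^{i\bar j}\phi_{i\bar j}=n+1$ and $\phi^{i\bar j}\phi_i\phi_{\bar j}=|d\phi|^2_\phi$, I expect the computation to collapse to an expression of the form
\begin{equation*}
\triangle_\phi w_K-\tfrac{(n+1)^2}{4}w_K
=e^{-\frac{n+1}{2}\phi}\left[\tfrac{(n+1)^2}{4}(1-|d\phi|^2_\phi)(\phi+K)+(n+1)(|d\phi|^2_\phi-1)+\cdots\right],
\end{equation*}
where the leading coefficient of $(\phi+K)$ is precisely $\tfrac{(n+1)^2}{4}(1-|d\phi|^2_\phi)$, matching the structure seen in Lemma \ref{lem.3.4} at the critical exponent $n+1-s=\frac{n+1}{2}$.

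Next I would invoke Lemma \ref{lem.3.2}, which gives $I=e^{\phi}(1-|d\phi|^2_\phi)>0$ throughout $\mathring X$, equivalently $1-|d\phi|^2_\phi>0$. This is the key positivity input. Dividing the quotient $\frac{\triangle_\phi w_K}{w_K}-\frac{(n+1)^2}{4}$ and clearing the factor $w_K=e^{-\frac{n+1}{2}\phi}(\phi+K)$, I would arrange the result as a sum of a strictly positive principal term proportional to $(1-|d\phi|^2_\phi)$ and lower-order correction terms (those not multiplied by $\phi+K$) that are bounded relative to $\phi+K$. The point of introducing the parameter $K$ is exactly to dominate these corrections: since $\phi=-\log(-\rho)\to+\infty$ only at the boundary and is bounded below on compact subsets, choosing $K$ large enough forces $\phi+K>0$ everywhere (giving positivity of $w_K$) and simultaneously makes the term carrying the factor $(\phi+K)$ dominate the $O(1)$ correction terms.

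The main obstacle I anticipate is the uniformity of the estimate as $\rho\to 0$, i.e.\ near the boundary. There $\phi+K\to+\infty$, so the dominant term $\tfrac{(n+1)^2}{4}(1-|d\phi|^2_\phi)(\phi+K)$ blows up favorably, but one must verify that the correction terms involving $|d\phi|^2_\phi-1$ and derivatives of $|d\phi|^2_\phi$ do not themselves blow up faster or change sign. This requires controlling the boundary behavior of $1-|d\phi|^2_\phi$: from Lemma \ref{lem.3.1} one has $1-|d\phi|^2_\phi=I\,e^{-\phi}+\cdots$ and $I\to\frac{1}{n(n+1)}R_\theta\ge 0$ at $M$, so $(1-|d\phi|^2_\phi)(\phi+K)\sim \frac{R_\theta}{n(n+1)}(-\rho)\log\frac{1}{-\rho}\to 0$. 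Thus near the boundary the positive principal term actually vanishes, and positivity of the full quotient there rests on the more subtle correction terms rather than the leading one. I would handle this by a separate asymptotic analysis in the collar neighborhood using the expansion \eqref{laplaceasymp.2} of $\triangle_\phi$, showing that the subleading terms combine to a strictly positive quantity (controlled by $R_\theta\ge 0$ and the transverse curvature $r$ via Lemma \ref{lem.3.1}), while on the compact interior the dominant-term argument with large $K$ suffices. Patching these two regimes—interior domination by large $K$ and boundary positivity from the curvature asymptotics—is where the real work lies.
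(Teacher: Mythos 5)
Your overall strategy (expand $\triangle_\phi w_K$ by the Leibniz rule, use $\phi^{i\bar j}\phi_{i\bar j}=n+1$ and $\phi^{i\bar j}\phi_i\phi_{\bar j}=|d\phi|^2_\phi$, invoke Lemma \ref{lem.3.2}, take $K$ large) is exactly the paper's, but you stop short of finishing the computation, and the two-regime patching argument you propose in its place rests on a misreading of the structure. If you carry the Leibniz expansion to the end you find that \emph{every} term acquires the factor $(1-|d\phi|^2_\phi)$, and the identity closes up completely:
\begin{equation*}
\triangle_{\phi}w_K-\frac{(n+1)^2}{4}w_K=e^{-\frac{n+1}{2}\phi}\,(1-|d\phi|^2_{\phi})\left(\frac{(n+1)^2}{4}(\phi+K)-(n+1)\right).
\end{equation*}
In particular the correction term you display, $(n+1)(|d\phi|^2_\phi-1)=-(n+1)(1-|d\phi|^2_\phi)$, is already the \emph{entire} remainder — there is no ``$\cdots$'', and no derivatives of $|d\phi|^2_\phi$ ever appear, since $\partial_i\partial_{\bar j}w_K$ only involves $\phi_i,\phi_{\bar j},\phi_{i\bar j}$. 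Given this factorization, Lemma \ref{lem.3.2} gives $1-|d\phi|^2_\phi>0$ in $\mathring X$, and since $\phi=-\log(-\rho)$ is bounded below on the compact $X$, choosing $K$ with $\frac{(n+1)^2}{4}(\min_X\phi+K)>n+1$ makes the second factor (and $w_K$ itself) positive everywhere. That is the whole proof.

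The gap in your write-up is the claim that ``near the boundary the positive principal term actually vanishes, and positivity of the full quotient there rests on the more subtle correction terms,'' followed by a proposed collar-neighborhood asymptotic analysis using \eqref{laplaceasymp.2}. This is not where the work lies, and as stated it is wrong: the correction term is \emph{negative} near the boundary as well, so it cannot be the source of positivity there. The point is that the statement only asserts positivity on the open set $\mathring X$, not a uniform lower bound up to $M$; both factors in the identity above are strictly positive at every interior point (the first tends to $0$ at $M$, the second tends to $+\infty$, but neither fact is needed), so no separate boundary analysis and no patching are required. Completing the algebra would have dissolved the obstacle you anticipated.
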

\begin{proof}
Direct computation shows that
$$
\triangle_{\phi}w_K-\frac{(n+1)^2}{4}w_K=e^{-\frac{n+1}{2}\phi} (1-|d\phi|^2_{\phi})
\left( \frac{(n+1)^2}{4}\phi-(n+1)+\frac{(n+1)^2}{4}K\right)>0
$$
in $\mathring{X}$ by Lemma \ref{lem.3.2} if $K$ is large enough. 
\end{proof}

\subsection{Spectral and Resolvent. }
We generalise some results of \cite{GQ} by Guillarmou and Qing for Poinca\'{e}-Einstein metric to ACHE metric of Bergman type. 

\begin{lemma}\label{lem.3.6}
Assume (A1)-(A4) and $R_{\theta}>0$ on $M$. 
If $u\in (-\rho)^{\frac{n+1}{2}}C^{2}(X)$ solves 
$$
\triangle_{\phi}u-\frac{(n+1)^2}{4}u=0, 
$$ 
then $u\equiv 0$. 
\end{lemma}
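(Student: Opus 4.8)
The plan is to run a maximum principle argument on the ratio of $u$ against the positive supersolution $w_K$ produced in Lemma \ref{lem.3.5}. Fix $K>0$ large enough that $w_K=(-\rho)^{\frac{n+1}{2}}(\phi+K)>0$ throughout $\mathring{X}$ and that $\triangle_{\phi} w_K-\frac{(n+1)^2}{4}w_K=f_K$ with $f_K>0$ in $\mathring{X}$; both hold by Lemma \ref{lem.3.5}. Set $\psi=u/w_K$, which is well defined and $C^2$ in $\mathring{X}$. The first step is to derive the linear elliptic equation satisfied by $\psi$: expanding $u=w_K\psi$ by the Leibniz rule and using both $\triangle_{\phi} u=\frac{(n+1)^2}{4}u$ and the supersolution identity for $w_K$, the zeroth-order terms cancel and one is left with
\begin{equation*}
\triangle_{\phi}\psi-\frac{1}{w_K}\phi^{i\bar{j}}\bigl((w_K)_i\psi_{\bar{j}}+(w_K)_{\bar{j}}\psi_i\bigr)+\frac{f_K}{w_K}\,\psi=0\qquad\text{in }\mathring{X},
\end{equation*}
a second-order elliptic equation whose zeroth-order coefficient $c:=f_K/w_K$ is strictly positive.

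The second step is to control $\psi$ at the boundary. Writing the hypothesis as $u=(-\rho)^{\frac{n+1}{2}}U$ with $U\in C^2(X)$, we get $\psi=U/(\phi+K)=U/\bigl(-\log(-\rho)+K\bigr)$; since $U$ is bounded up to $M$ and $\phi\to+\infty$ as $\rho\to0$, the ratio $\psi$ extends continuously to $X$ with $\psi|_M=0$. This is precisely the role of the logarithmic factor in $w_K$: it decays just slightly slower than the threshold rate $(-\rho)^{\frac{n+1}{2}}$ of $u$, forcing the quotient to vanish on $M$.

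The final step is the maximum principle. Since $X$ is compact and $\psi$ is continuous on $X$, it attains its maximum. Suppose this maximum is positive; as $\psi|_M=0$ it would be attained at an interior point $p\in\mathring{X}$, where $d\psi(p)=0$ (killing the first-order terms) and the complex Hessian $(\psi_{i\bar{j}})(p)$ is negative semidefinite, so that $\triangle_{\phi}\psi(p)=-\phi^{i\bar{j}}\psi_{i\bar{j}}(p)\ge0$. The equation then forces $\triangle_{\phi}\psi(p)=-c(p)\psi(p)<0$, a contradiction. Hence $\max_X\psi=0$, i.e. $\psi\le0$; applying the same reasoning to $-\psi$, which solves the identical homogeneous equation, gives $\psi\ge0$. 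Therefore $\psi\equiv0$ and $u\equiv0$. I expect the only delicate point to be the boundary step: one must verify that the quotient genuinely tends to $0$ at the threshold decay rate, and observe that the first-order coefficients of the equation for $\psi$, though singular near $M$, never enter the argument because any positive interior maximum is a priori separated from $M$, where $\psi$ is already small.
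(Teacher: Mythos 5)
Your proof is correct and takes essentially the same route as the paper: the paper also divides $u$ by the supersolution $w_K$ from Lemma \ref{lem.3.5}, writes the resulting elliptic equation for the quotient (with the same positive zeroth-order coefficient and first-order terms involving $\log w_K$), notes that the quotient vanishes on $M$ because of the logarithmic factor, and concludes by the maximum principle. You have simply spelled out the details that the paper leaves implicit, including the observation that the singular first-order coefficients are harmless because the gradient vanishes at an interior extremum.
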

\begin{proof}
Consider the equation for $u/w_K$ for $K>0$ large:
$$
\triangle_{\phi}\left(\frac{u}{w_K}\right)=\left(s(n+1-s)-\frac{\triangle_{\phi}w_K}{w_K}\right)\left(\frac{u}{w_K}\right) +\phi^{i\bar{j}}\left[\left(\frac{u}{w_K}\right)_i(\log w_K)_{\bar{j}}+\left(\frac{u}{w_K}\right)_{\bar{j}}(\log w_K)_{i}\right]. 
$$
Here by assumption
$$
\frac{u}{w_K}\vline_{M}=0. 
$$
Hence applying Lemma \ref{lem.3.5} and maximum principle shows that $u\equiv 0$. 
\end{proof}

\begin{proposition}
Assume (A1)-(A4). 
If the Webster scalar curvature $R_{\theta}\geq 0$  on $M$, then
$$
\sigma(\triangle_{\phi})=\sigma_{ac}(\triangle_{\phi})=\left[(n+1)^2/4,\infty\right). 
$$
Moreover, the resolvent $R(s)=(\triangle_{\phi}-s(n+1-s))^{-1}$ is analytic at $s=\frac{n+1}{2}$. \end{proposition}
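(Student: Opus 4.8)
The plan is to deduce both assertions from the structural description of the spectrum already recorded in Section 2, namely $\sigma_{ac}(\triangle_\phi)=[(n+1)^2/4,\infty)$, $\sigma_{pp}(\triangle_\phi)\subset(0,(n+1)^2/4)$, and the absence of singular continuous spectrum. Thus the spectral identity reduces to showing $\sigma_{pp}(\triangle_\phi)=\emptyset$. First I would suppose $\lambda\in(0,(n+1)^2/4)$ is an $L^2$-eigenvalue with eigenfunction $u\not\equiv 0$ and, using self-adjointness, write $\lambda=s(n+1-s)$ with $s\in(\frac{n+1}{2},n+1)$ real. The indicial roots of $\triangle_\phi-\lambda$ at $M$ are $s$ and $n+1-s$; since $\mathrm{dvol}_\phi\sim(-\rho)^{-(n+2)}$, only the $(-\rho)^s$ mode is square-integrable, so by the boundary regularity of the calculus of \cite{EMM}\cite{GS} I would record that $u=(-\rho)^sU$ with $U$ continuous up to $M$.

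Next I would compare $u$ with the test function $v_s=(-\rho)^{n+1-s}$ of Lemma \ref{lem.3.4}. Setting $\psi=u/v_s$, near $M$ one has $\psi\sim(-\rho)^{2s-(n+1)}\to 0$ because $2s-(n+1)>0$, so $\psi$ extends continuously to $\bar X$ with $\psi|_M=0$. Writing $u=v_s\psi$ and expanding $\triangle_\phi u=\lambda u$ by the product rule for $\triangle_\phi=-\phi^{i\bar j}\partial_i\partial_{\bar j}$ yields
\begin{equation*}
\triangle_\phi\psi=\Big(\lambda-\frac{\triangle_\phi v_s}{v_s}\Big)\psi+\phi^{i\bar j}\big[(\log v_s)_i\psi_{\bar j}+(\log v_s)_{\bar j}\psi_i\big].
\end{equation*}
By Lemma \ref{lem.3.4} the zeroth order coefficient is $\lambda-\frac{\triangle_\phi v_s}{v_s}=-(n+1-s)^2(-\rho)I$, which is strictly negative in $\mathring X$ since $I>0$ by Lemma \ref{lem.3.2}.

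Then I would run the maximum principle: at an interior positive maximum $p$ of $\psi$ the first-order terms vanish and $\triangle_\phi\psi(p)=-\phi^{i\bar j}\psi_{i\bar j}(p)\geq 0$, while the right-hand side equals $-(n+1-s)^2(-\rho(p))I(p)\psi(p)<0$, a contradiction; the minimum is excluded symmetrically. Since $\psi|_M=0$ this forces $\psi\equiv 0$, hence $u\equiv 0$ and $\sigma_{pp}(\triangle_\phi)=\emptyset$, giving $\sigma(\triangle_\phi)=\sigma_{ac}(\triangle_\phi)$. For the analyticity of $R(s)$ at $s=\frac{n+1}{2}$, I would observe that there the two indicial roots coalesce, so a pole of the meromorphic continuation would produce a nontrivial threshold solution $u\in(-\rho)^{\frac{n+1}{2}}C^2(X)$ of $\triangle_\phi u-\frac{(n+1)^2}{4}u=0$; Lemma \ref{lem.3.6} (the comparison with $w_K$ of Lemma \ref{lem.3.5}) rules this out, so $R(s)$ is regular at $s=\frac{n+1}{2}$.

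The hard part will be the boundary regularity underlying the first step: verifying that an $L^2$-eigenfunction is genuinely of the form $(-\rho)^sU$ with $U$ continuous up to $M$, so that $\psi=u/v_s$ is an admissible competitor in the maximum principle. This rests on the polyhomogeneous expansion and the $L^2$-exclusion of the $(-\rho)^{n+1-s}$ mode from the even-expansion theory of \cite{EMM}\cite{GS}. A secondary subtlety is the threshold $s=\frac{n+1}{2}$, where the coalescence of indicial roots introduces possible $\log(-\rho)$ terms and forces one to work in the log-free space $(-\rho)^{\frac{n+1}{2}}C^2(X)$ and invoke Lemma \ref{lem.3.6} in place of Lemma \ref{lem.3.4}.
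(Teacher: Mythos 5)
Your argument is correct in substance but reaches $\sigma_{pp}(\triangle_{\phi})=\emptyset$ by a different (and heavier) route than the paper. You posit an eigenfunction, invoke the boundary regularity $u=(-\rho)^{s}U$ with $U$ continuous up to $M$ from the Epstein--Melrose--Mendoza calculus, form $\psi=u/v_s$ and run a maximum principle using the strict sign $\lambda-\triangle_{\phi}v_s/v_s=-(n+1-s)^2(-\rho)I<0$ supplied by Lemmas \ref{lem.3.2} and \ref{lem.3.4}; this is precisely the comparison scheme the paper deploys later in Propositions \ref{prop.3.1} and \ref{prop.3.2}, and it does work here, but the polyhomogeneous decay of $L^2$-eigenfunctions is a nontrivial external input, which you rightly flag as the hard part. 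The paper's proof avoids it entirely: for real $s<\frac{n+1}{2}$ the function $v_s=(-\rho)^{n+1-s}$ is positive, lies in $L^2(X,\mathrm{dvol}_{\phi})$, and satisfies $(\triangle_{\phi}-s(n+1-s))v_s>0$ by the same Lemma \ref{lem.3.4}, so a positive-supersolution (Barta/Allegretto--Piepenbrink type) argument excludes all $L^2$-eigenvalues below $s(n+1-s)$; letting $s\uparrow\frac{n+1}{2}$ and recalling $\sigma_{pp}(\triangle_{\phi})\subset(0,(n+1)^2/4)$ gives $\sigma_{pp}(\triangle_{\phi})=\emptyset$ with no eigenfunction asymptotics needed (the paper also simply cites \cite{LW} for the first identity). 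Your treatment of the threshold matches the paper's, with one gloss: the possible pole at $s=\frac{n+1}{2}$ has order at most two, the order-two part corresponding to an $L^2$ threshold eigenfunction --- excluded because $\frac{(n+1)^2}{4}\notin\sigma_{pp}(\triangle_{\phi})$ by \cite{EMM} --- and the order-one part to a resonant state in $(-\rho)^{\frac{n+1}{2}}C^{2}(X)$, which Lemma \ref{lem.3.6} kills; you only address the second of these explicitly, though the first is already covered by your spectral conclusion.
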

\begin{proof}
The first statement is proved in \cite{LW}. 
For any real  $s<\frac{n+1}{2}$,  the test function $0<v_s=(-\rho)^{n+1-s}\in L^2(X,\mathrm{dvol}_{\phi})$. Since $\triangle_{\phi}v_s - s(n+1-s)v_s>0$, it implies that
 there is no $L^2$-eigenvalues less than $s(n+1-s)$. Since $s$ is arbitrary, $\sigma_{pp}(\triangle_{\phi})=\emptyset$.  Hence $\sigma(\triangle_{\phi})=\sigma_{ac}(\triangle_{\phi})=[(n+1)^2/4,\infty)$. The resolvent $R(s)$ has a finite pole of order $\leq 2$ at $\frac{n+1}{2}$, corresponding to solutions solving
 $$
\triangle_{\phi }u-\frac{(n+1)^2}{4}u=0
$$
and belonging to $L^2(X, \mathrm{dvol}_{\phi})$ or $(-\rho)^{\frac{n+1}{2}}C^{2}(X)$. First, it is showed in \cite{EMM} that $\frac{(n+1)^2}{4}\notin \sigma_{pp}(\triangle_{\phi})$, so there is no solution belonging to $L^2(X, \mathrm{dvol}_{\phi})$. Second, by Lemma \ref{lem.3.6}, there is no solution belonging to  $(-\rho)^{\frac{n+1}{2}}C^{2}(X)$. Hence $R(s)$ is analytic at $s=\frac{n+1}{2}$.
\end{proof}

\subsection{Scattering}

\begin{proposition}\label{prop.3.1}
Assume (A1)-(A4). 
For $\gamma\in(0,1)$ and at any $p\in M$, 
\begin{itemize}
\item[(a)] $R_{\theta}(p)>0$ implies $Q_{2\gamma}(p)>0$;  
\item[(b)] $R_{\theta}(p)\geq 0$ implies $Q_{2\gamma}(p)\geq 0$. 
\end{itemize}
\end{proposition}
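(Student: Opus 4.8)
The plan is to track the sign of the scattering datum $G|_M=S(s)1$ and convert it into the sign of $Q_{2\gamma}$. Writing $s=\frac{n+1+\gamma}{2}$, the renormalisation constant $2^{2\gamma}\frac{\Gamma(\gamma)}{\Gamma(-\gamma)}$ is \emph{negative} for $\gamma\in(0,1)$ (indeed $\Gamma(\gamma)>0$ while $\Gamma(-\gamma)<0$), so $Q_{2\gamma}(p)=P_{2\gamma}1(p)$ has the opposite sign to $G|_M(p)$; thus (a) and (b) are equivalent to $G|_M(p)<0$ and $G|_M(p)\le0$ respectively. I solve $\triangle_{\phi}u-s(n+1-s)u=0$ with Dirichlet datum $f=1$, so $u=(-\rho)^{n+1-s}F+(-\rho)^{s}G$ with $F|_M=1$ and $S(s)1=G|_M$. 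At the outset I invoke the pseudo-Einstein property recorded after Lemma \ref{lem.3.1}: for $n\ge2$ the Webster curvature $R_{\theta}$ is constant on $M$, so the pointwise hypothesis $R_{\theta}(p)>0$ (resp. $\ge0$) already forces $R_{\theta}>0$ (resp. $\ge0$) everywhere, which is exactly what Lemma \ref{lem.3.2} needs in order to guarantee $I=e^{\phi}(1-|d\phi|^2_{\phi})>0$ throughout $\mathring{X}$; moreover the proof of Lemma \ref{lem.3.2} records $I|_M=\tfrac{1}{n(n+1)}R_{\theta}$, which will govern the behaviour at the boundary.

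The comparison is with the strict supersolution $v_s=(-\rho)^{n+1-s}$ of Lemma \ref{lem.3.4}. Set $U=u/v_s$ and $V=U-1$. The boundary asymptotics give $U=F+(-\rho)^{\gamma}G$, hence $V|_M=0$, and since $0<\gamma<1$ the term $(-\rho)^{\gamma}G$ dominates the $O(-\rho)$ error in $F-1$, so that
\[ \lim_{\rho\to0}(-\rho)^{-\gamma}V=G|_M. \]
A direct computation using $\triangle_{\phi}u=s(n+1-s)u$ together with Lemma \ref{lem.3.4} turns the equation for $V$ into
\[ \triangle_{\phi}V+(n+1-s)\phi^{i\bar{j}}\left(\phi_iV_{\bar{j}}+\phi_{\bar{j}}V_i\right)+(n+1-s)^2(-\rho)I\,V=-(n+1-s)^2(-\rho)I, \]
an elliptic equation whose zeroth order coefficient $c:=(n+1-s)^2(-\rho)I$ is $\ge0$ (indeed $>0$ in $\mathring{X}$, since $-\rho>0$, $n+1-s>0$, and $I>0$ by Lemma \ref{lem.3.2}), while its right-hand side $-c$ is $\le0$.

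For part (b) I apply the maximum principle. At an interior point $x_0$ where $V(x_0)>0$ the first order term drops out, $\triangle_{\phi}V(x_0)\ge0$, and $c(x_0)V(x_0)>0$, so the left-hand side is strictly positive while the right-hand side equals $-c(x_0)<0$, a contradiction; hence $\max_X V$ is attained on $M$, where $V=0$, giving $V\le0$ in $\mathring{X}$, then $G|_M=\lim(-\rho)^{-\gamma}V\le0$, and finally $Q_{2\gamma}(p)\ge0$. For the interior step of part (a) I upgrade this to the strong maximum principle: were $V$ to vanish at an interior point it would be identically $0$, contradicting $-c<0$ in $\mathring{X}$; thus $V<0$ throughout $\mathring{X}$.

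The remaining, and main, difficulty is to pass from $V<0$ in $\mathring{X}$ to the \emph{strict} boundary inequality $G|_M(p)<0$, i.e. to show that $V$ vanishes exactly at the fractional rate $(-\rho)^{\gamma}$ with a strictly negative coefficient. The obstacle is that $\triangle_{\phi}$ \emph{degenerates} at $M$ (the coefficients $\phi^{i\bar{j}}$ vanish like $-\rho$), so the classical Hopf boundary point lemma only detects the order-one rate $(-\rho)^1$, not the fractional rate $(-\rho)^{\gamma}$ relevant here. I would resolve this with a Hopf-type lemma adapted to the degeneracy. Working in a collar $\{0<-\rho<\delta\}$ near $p$, where $R_{\theta}(p)>0$ forces $I\ge I_0>0$ \emph{up to} the boundary (via $I|_M=\tfrac{1}{n(n+1)}R_{\theta}$), I would build a local barrier out of the two indicial solutions $(-\rho)^{n+1-s}$ and $(-\rho)^{s}$ — using that, by the same computation as Lemma \ref{lem.3.4}, $\triangle_{\phi}(-\rho)^{s}-s(n+1-s)(-\rho)^{s}=s^2(-\rho)I(-\rho)^{s}$ — tuned so that it sits below $V$ on the inner face $\{-\rho=\delta\}$ yet forces a strictly negative coefficient of $(-\rho)^{\gamma}$ at $M$, and then compare it with $V$ through the operator above. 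Equivalently, one may recognise the equation for $V$ as a Caffarelli–Silvestre-type degenerate elliptic extension (the weight $(-\rho)^{-\gamma}$ already appears in the energy identity of Theorem \ref{thm.2}) and invoke the weighted Hopf lemma for $A_2$-degenerate operators. When instead $R_{\theta}(p)=0$ the coefficient $I|_M(p)$ vanishes, the barrier only yields the non-strict conclusion, and we recover precisely the weaker statement of part (b). This degenerate boundary point lemma is where essentially all the work lies; the interior comparison above is routine once Lemmas \ref{lem.3.2} and \ref{lem.3.4} are in hand.
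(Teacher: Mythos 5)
Your overall strategy---reducing the sign of $Q_{2\gamma}(p)$ to the sign of $S(s)1(p)$ via the negativity of $2^{2\gamma}\Gamma(\gamma)/\Gamma(-\gamma)$, and comparing the solution $u$ (with datum $1$) against the supersolution $v_s$ of Lemma \ref{lem.3.4}---is exactly the paper's, and your part (b) is complete: $V=u/v_s-1\le 0$ by the maximum principle, hence $S(s)1(p)=\lim_{\rho\to 0}(-\rho)^{-\gamma}V\le 0$, hence $Q_{2\gamma}(p)\ge 0$. (You are also right to flag that the pointwise hypothesis at $p$ only feeds the global maximum principle because $R_\theta$ is constant on $M$ by pseudo-Einsteinness.) The problem is part (a): you correctly identify that $V<0$ in $\mathring{X}$ does not by itself force a strictly negative coefficient of $(-\rho)^{\gamma}$, but you then defer this entire step to a degenerate Hopf-type boundary point lemma that you only sketch (``I would build a local barrier \dots tuned so that \dots'', ``invoke the weighted Hopf lemma''). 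As written that is a missing step, not a proof; and it is also not the tool the situation calls for.

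The idea you are missing is that the coefficient of the \emph{next} power $(-\rho)^{1}$ in the expansion of $u/v_s$ is explicitly computable from the indicial analysis (\ref{laplaceasymp.2}) of $\triangle_\phi$:
\[
\frac{u}{v_s}=1+(-\rho)^{\gamma}S(s)1+(-\rho)\,w_1+\mathcal{O}((-\rho)^{1+\gamma}),
\qquad
w_1=\frac{1}{1-\gamma}\,\frac{(n+1-s)^2}{n(n+1)}\,R_{\theta},
\]
so $w_1(p)>0$ precisely when $R_{\theta}(p)>0$. Feeding this into your inequality $u/v_s\le 1$ settles strictness at once: if $S(s)1(p)>0$ the term $(-\rho)^{\gamma}S(s)1(p)$ dominates and violates the inequality for small $-\rho$; if $S(s)1(p)=0$ then dividing by $(-\rho)$ and letting $\rho\to 0$ yields $w_1(p)\le 0$, contradicting $R_{\theta}(p)>0$. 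Hence $S(s)1(p)<0$ and $Q_{2\gamma}(p)>0$, with the fractional rate handled by the asymptotic expansion rather than by a boundary point lemma. Your proposed barrier out of the two indicial solutions $(-\rho)^{n+1-s}$ and $(-\rho)^{s}$ could likely be pushed through---but only after dividing by $v_s$ again so that the zeroth-order coefficient acquires the right sign (the naive comparison for $\triangle_\phi-s(n+1-s)$ fails because $s(n+1-s)>0$), and it would still require the quantitative interior bound $V\le -c_\delta$ on the inner face; in any case it is unnecessary extra machinery once the term $w_1$ is in hand.
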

\begin{proof} Denote  $s=\frac{n+1+\gamma}{2}$. 
Let $w$ solves  the equation 
$$
\triangle_{\phi}w-s(n+1-s)w=0, \quad w\sim (-\rho)^{n+1-s}  \textrm{ as $\rho\rightarrow 0$}. 
$$
Then using the asymptotic expansion (\ref{laplaceasymp.2}) of $\triangle_{\phi}$, we can compute the asymptotical expansion of $w$ as follows:
$$
w=(-\rho)^{n+1-s} \left[1+(-\rho)^{\gamma}S(s)1+(-\rho)w_1 +\mathcal{O}((-\rho)^{1+\gamma})\right], 
$$
where
$$
w_1=\frac{-1}{2s-n-2}\frac{(n+1-s)^2}{n(n+1)}R_{\theta}.
$$
Compare $w$ with $v_{s}$.  Then $w/v_s$ satisfies
\begin{equation}\label{compareeq.1}
\triangle_{\phi}\left(\frac{w}{v_s}\right)=\left(s(n+1-s)-\frac{\triangle_{\phi}v_s}{v_s}\right)\left(\frac{w}{v_s}\right) +\phi^{i\bar{j}}\left[\left(\frac{w}{v_s}\right)_i(\log v_s)_{\bar{j}}+\left(\frac{w}{v_s}\right)_{\bar{j}}(\log v_s)_{i}\right]. 
\end{equation}
At the boundary
$$
\left(\frac{w}{v_s}\right)\vline_{M}=1. 
$$
By maximum principle, $w\leq v_s=(-\rho)^{n+1-s}$ all over $X$. So near boundary
$$
1+(-\rho)^{\gamma}S(s)1+(-\rho)w_1 +\mathcal{O}((-\rho)^{1+\gamma})\leq 1. 
$$
If $w_1(p)>0$, then it forces $S(s)1<0$ at $p$, which is equivalent to  $Q_{2\gamma}(p)>0$ by the renormalisation formula (\ref{rescatteringop}) for $\gamma\in(0,1)$. Similarly, if $w_1(p)\geq 0$, then $S(s)1\leq 0$, i.e. $Q_{2\gamma}(p)\geq 0$. 
\end{proof}

\begin{proposition}\label{prop.3.2}
Assume (A1)-(A4). 
If the Webster scalar curvature $R_{\theta}>0$  on $M$, then for $\gamma\in(0,1)$ and $f\in C^{\infty}(M)$, we have: 
\begin{itemize}
\item[(a)] if $P_{2\gamma}f>0$  then $f>0$;
\item[(b)] if $P_{2\gamma}f\geq 0$  then $f\geq 0$. 
\end{itemize}
\end{proposition}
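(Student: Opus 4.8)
The plan is to reduce both parts to a maximum-principle argument for the scattering solution, reusing the comparison identity (\ref{compareeq.1}) from Proposition \ref{prop.3.1}, and then to deduce (b) from (a) by a perturbation that exploits the positivity of $Q_{2\gamma}$. First I would record the sign bookkeeping: for $\gamma\in(0,1)$ one has $\Gamma(\gamma)>0$ and $\Gamma(-\gamma)<0$, so the constant in (\ref{rescatteringop}) is negative and $P_{2\gamma}f>0$ is equivalent to $S(s)f<0$, where $s=\frac{n+1+\gamma}{2}$. Since $R_{\theta}>0$ forces $\sigma_{pp}(\triangle_{\phi})=\emptyset$ (established above) and $2s-(n+1)=\gamma\notin\mathbb{N}$, the scattering solution $u=(-\rho)^{n+1-s}F+(-\rho)^{s}G$ with $F,G\in C^{\infty}(X)$, $F|_{M}=f$, $G|_{M}=S(s)f$, exists and is unique.

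The core is part (a). Set $U=u/v_{s}=F+(-\rho)^{\gamma}G$, which is continuous up to $M$ with $U|_{M}=f$ and satisfies the same identity (\ref{compareeq.1}). By Lemma \ref{lem.3.4} the zeroth order coefficient equals $s(n+1-s)-\triangle_{\phi}v_{s}/v_{s}=-(n+1-s)^{2}(-\rho)I<0$ in $\mathring{X}$, the first order coefficients are bounded, and the leading part $\phi^{i\bar{j}}\partial_{i}\partial_{\bar{j}}$ is uniformly elliptic on compact subsets of $\mathring{X}$; thus $U$ solves an elliptic equation with nonpositive zeroth order term. Arguing by contradiction, suppose $f(p_{0})=\min_{M}f\le 0$. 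The strong minimum principle forbids a nonpositive interior minimum unless $U$ is constant, and a constant solution of (\ref{compareeq.1}) must vanish identically (the coefficient is strictly negative), forcing $f\equiv0$ and $S(s)f\equiv0$, against $S(s)f<0$. Hence the minimum of $U$ over the compact $\overline{X}$ is attained on $M$, equals $f(p_{0})$, and $U\ge f(p_{0})$ everywhere. Along the inward normal from $p_{0}$ in the variable $x=-\rho$, smoothness of $F$ and $G$ gives $U=f(p_{0})+x^{\gamma}G(p_{0})+\mathcal{O}(x)+\mathcal{O}(x^{1+\gamma})$, whose leading correction $x^{\gamma}G(p_{0})$ dominates the $\mathcal{O}(x)$ term precisely because $\gamma<1$; then $U\ge f(p_{0})$ forces $G(p_{0})=S(s)f(p_{0})\ge0$, contradicting $S(s)f<0$. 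Therefore $\min_{M}f>0$, proving (a).

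Part (b) then follows by perturbation. Since $R_{\theta}>0$, Proposition \ref{prop.3.1}(a) gives $Q_{2\gamma}=P_{2\gamma}1>0$ on the compact boundary $M$. If $P_{2\gamma}f\ge0$, linearity of $P_{2\gamma}$ yields $P_{2\gamma}(f+\epsilon)=P_{2\gamma}f+\epsilon Q_{2\gamma}>0$ for every $\epsilon>0$, so part (a) gives $f+\epsilon>0$; letting $\epsilon\to0^{+}$ gives $f\ge0$.

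The main obstacle is the boundary-point step that extracts the sign of $G(p_{0})$ from the minimality of $U$ at $p_{0}$: one must justify, from the regularity of $F$ and $G$ supplied by the scattering construction, that the fractional term $(-\rho)^{\gamma}G$ governs $U-f(p_{0})$ to leading order along the normal. This is exactly where $\gamma<1$ enters, and it plays the role of a Hopf boundary lemma for the operator in (\ref{compareeq.1}), which is degenerate at $M$. The strict hypothesis $S(s)f<0$ in (a) is what keeps this a soft comparison, so that no quantitative boundary estimate is needed; this is also why (b), where only $S(s)f\le0$ is available, must be routed through the positivity of $Q_{2\gamma}$ rather than proved directly.
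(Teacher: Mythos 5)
Your proof is correct, and part (a) follows essentially the paper's own route: both compare $u$ with the supersolution $v_{s}=(-\rho)^{n+1-s}$ via the identity (\ref{compareeq.2}), apply the minimum principle (legitimate because Lemma \ref{lem.3.4} makes the zeroth-order coefficient strictly negative in $\mathring{X}$), and read off the sign of $S(s)f$ at a boundary minimum from the asymptotic expansion, where $\gamma<1$ makes the $(-\rho)^{\gamma}$ term dominate. You in fact streamline this step slightly: the paper also records $u_{1}(p)\leq 0$ at the minimum, but as your argument shows this is not needed for (a), since dividing by $(-\rho)^{\gamma}$ and letting $\rho\to 0$ already yields $S(s)f(p)\geq 0$. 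Part (b) is where you genuinely diverge. The paper reruns the same boundary-expansion argument assuming $f(p)=\min_{M}f<0$ and uses $R_{\theta}>0$ to force the subleading coefficient $u_{1}(p)=\frac{1}{1-\gamma}\left(\frac{1}{2}\triangle_{b}f+\frac{(n+1-s)^{2}}{n(n+1)}R_{\theta}f\right)(p)$ to be strictly negative, so that even when $S(s)f(p)=0$ the term $(-\rho)u_{1}(p)$ produces the contradiction. You instead deduce (b) from (a) by perturbing $f$ to $f+\epsilon$ and invoking $Q_{2\gamma}>0$ from Proposition \ref{prop.3.1}(a). Both are valid; your route is more modular and makes the dependence on the fractional curvature explicit, while the paper's direct argument shows exactly where $R_{\theta}>0$ enters the boundary asymptotics and is the version that survives in the borderline setting $R_{\theta}\geq 0$ treated in the subsequent proposition, where $Q_{2\gamma}$ is only nonnegative and your perturbation would no longer give a strict inequality.
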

\begin{proof}
Denote $s=\frac{n+1+\gamma}{2}$. 
Let $u$ solves  the equation 
$$
\triangle_{\phi}u-s(n+1-s)u=0, \quad u\sim (-\rho)^{n+1-s}f  \textrm{ as $\rho\rightarrow 0$}. 
$$
Then near boundary, $u$ has asymptotical expansion:
$$
u=(-\rho)^{n+1-s} \left[f+(-\rho)^{\gamma}S(s)f+(-\rho)u_1 +\mathcal{O}((-\rho)^{1+\gamma})\right]
$$
where
$$
S(s)f=2^{-2\gamma}\frac{\Gamma(-\gamma)}{\Gamma(\gamma)} P_{2\gamma}f, 
$$
$$
u_1=\frac{-1}{2s-n-2}\left(\frac{1}{2}\triangle_b+\frac{(n+1-s)^2}{n(n+1)}R_{\theta}\right)f. 
$$
Similar as the proof of Proposition \ref{prop.3.1}, $u/v_{s}$ satisfies
\begin{equation}\label{compareeq.2}
\triangle_{\phi}\left(\frac{u}{v_s}\right)=\left(s(n+1-s)-\frac{\triangle_{\phi}v_s}{v_s}\right)\left(\frac{u}{v_s}\right) +\phi^{i\bar{j}}\left[\left(\frac{u}{v_s}\right)_i(\log v_s)_{\bar{j}}+\left(\frac{u}{v_s}\right)_{\bar{j}}(\log v_s)_{i}\right]. 
\end{equation}

First, assume $P_{2\gamma}f>0$. This implies $S(s)f<0$. If $\min_M f=f(p)\leq 0$, then $u_1(p)\leq 0$ and on the boundary
$$
\min_M \left(\frac{u}{v_s}\right)=f(p)\leq 0. 
$$
Applying maximum principle to (\ref{compareeq.2}) shows that there is no interior negative minimum. Hence $u\geq f(p)v_{n+1-s}=f(p)(-\rho)^{n+1-s}$ all over $X$. So at $(p,\rho)$ for $\rho\sim 0$, 
$$
f(p)+(-\rho)^{\gamma}S(s)f|_{p}+(-\rho)u_1(p) +\mathcal{O}((-\rho)^{1+\gamma})\geq f(p).
$$
This conflicts with $S(s)f<0$ and $u_1(p)\leq 0$. So $f>0$ on $M$. 

Second, assume $P_{2\gamma}f\geq 0$. This implies $S(s)f\leq 0$. 
If $\min_M f=f(p)< 0$, then $u_1(p)< 0$ and on the boundary
$$
\min_M \left(\frac{u}{v_s}\right)=f(p)< 0. 
$$
Applying maximum principle to (\ref{compareeq.2}) shows that there is no interior negative minimum. Hence $u\geq f(p)v_s=f(p)(-\rho)^{n+1-s}$ all over $X$. So at $(p,\rho)$ for $\rho\sim 0$,
$$
f(p)+(-\rho)^{\gamma}S(s)f|_{p}+(-\rho)u_1(p) +\mathcal{O}((-\rho)^{1+\gamma})\geq f(p). 
$$
This conflicts with $S(s)f\leq 0$ and $u_1(p)< 0$.  So $f\geq 0$ on $M$. 

\end{proof}

\begin{proposition}\label{prop.3.4}
Assume (A1)-(A4). 
If the Webster scalar curvature $R_{\theta}>0$  on $M$, then for $\gamma\in(0,1)$, the bottom spectrum of $P_{2\gamma}$ is positive. 
\end{proposition}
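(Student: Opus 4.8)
The plan is to obtain the bottom-spectrum estimate as an immediate consequence of the second energy extension formula, part (b) of Theorem~\ref{thm.2}, combined with the pointwise positivity of $Q_{2\gamma}$ from Proposition~\ref{prop.3.1}. Setting $s=\frac{n+1+\gamma}{2}$ and letting $u$ solve $\triangle_\phi u - s(n+1-s)u=0$ with Dirichlet data $f$, Theorem~\ref{thm.2}(b) gives
$$
d_{\gamma}\oint_{M}\bigl(fP_{2\gamma}f -Q_{2\gamma}f^2\bigr)\,\theta \wedge (d\theta)^n
=\int_X  (-\rho)^{-\gamma}W^2\left[ (|d\rho|^2_{\rho}-\rho)\rho^{i\bar{j}}-\rho^i\rho^{\bar{j}}\right] U'_iU'_{\bar{j}}\,\mathrm{dvol}_{\rho}.
$$

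First I would check that the right-hand side is nonnegative. The factor $(-\rho)^{-\gamma}W^2$ is nonnegative in $\mathring X$, so the sign is governed by the Hermitian form with matrix $(|d\rho|^2_{\rho}-\rho)\rho^{i\bar{j}}-\rho^i\rho^{\bar{j}}$. By identity~(\ref{eq.2.3}) this matrix equals $\tfrac{|d\rho|^2_{\rho}-\rho}{-\rho}\,\phi^{i\bar{j}}$, and since $|d\rho|^2_{\rho}-\rho = I^{-1}>0$ by Lemma~\ref{lem.3.2}, $-\rho>0$ in $\mathring X$, and $[\phi^{i\bar{j}}]$ is positive definite by (A2), the form is positive definite. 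As $U'$ is real, $U'_iU'_{\bar{j}}$ is contracted against a positive definite Hermitian matrix, so the integrand is $\geq 0$ pointwise and the whole integral is $\geq 0$. Dividing by $d_\gamma>0$ yields
$$
\oint_M fP_{2\gamma}f\,\theta\wedge(d\theta)^n \ \geq\ \oint_M Q_{2\gamma}f^2\,\theta\wedge(d\theta)^n.
$$

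Next I would bound the right-hand side from below. Since $R_\theta>0$ everywhere on the compact boundary $M$, Proposition~\ref{prop.3.1}(a) gives $Q_{2\gamma}>0$ at each point, so $C_\gamma:=\min_M Q_{2\gamma}$ is attained and strictly positive. Consequently
$$
\oint_M fP_{2\gamma}f\,\theta\wedge(d\theta)^n \ \geq\ C_\gamma\oint_M f^2\,\theta\wedge(d\theta)^n,
$$
which is precisely the asserted coercivity and shows that the bottom of the spectrum of $P_{2\gamma}$ is at least $C_\gamma>0$; this also establishes part~(c) of Theorem~\ref{thm.1}.

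The substantive content lies entirely in the energy identity of Theorem~\ref{thm.2}(b); granting it, the remaining steps---the linear-algebra positivity check for the Hermitian form and the compactness argument producing $C_\gamma>0$---are routine. Thus the main obstacle is not the spectral estimate itself but the derivation of the extension formula, which requires a careful Green-type integration by parts on $(X,g_\phi)$ against a weighted energy flux built from $(-\rho)^{-\gamma}W^2\phi^{i\bar{j}}U'_i$, together with control of the boundary terms in the asymptotic expansion of $u$; this is carried out in Section~4.
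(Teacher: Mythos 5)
Your argument is correct, but it follows a genuinely different route from the paper's own proof of Proposition~\ref{prop.3.4}. The paper stays entirely within Section~3: it first notes that the bottom spectrum of the CR-Yamabe operator $P_2$ is positive because $R_\theta>0$, then rules out a zero eigenvalue of $P_{2\gamma}$ for $\gamma\in(0,1)$ by combining hypoellipticity and Fredholmness in the Heisenberg calculus (so that $0$ in the spectrum would force a smooth kernel element $f$) with the maximum-principle statement of Proposition~\ref{prop.3.2}(b) applied to both $f$ and $-f$ to get $f\equiv 0$, and finally invokes a continuity argument in $\gamma$ to conclude that the bottom eigenvalue, positive at $\gamma=1$ and never zero on $(0,1]$, remains positive. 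Your proof instead imports the energy identity of Proposition~\ref{prop.4.2} (equivalently Corollary~\ref{cor.4.1}) together with $Q_{2\gamma}>0$ from Proposition~\ref{prop.3.1}(a). This is legitimate and non-circular --- Section~4 rests on Lemmas~\ref{lem.3.2}--\ref{lem.3.4}, the plurisubharmonicity of $\rho$, and the spectral proposition guaranteeing $w>0$, but not on Proposition~\ref{prop.3.4} --- and it is in fact exactly how the paper assembles the quantitative form of Theorem~\ref{thm.1}(c), where Corollary~\ref{cor.4.1} is cited alongside Proposition~\ref{prop.3.4}. What your route buys is the explicit coercivity constant $C_\gamma=\min_M Q_{2\gamma}$ rather than bare positivity; what the paper's route buys is a proof that needs none of the energy-extension machinery of Section~4 (in particular neither the positivity of $w$ nor the weighted integration by parts), only the comparison-function maximum principle and soft spectral theory. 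Your linear-algebra verification that $(|d\rho|^2_{\rho}-\rho)\rho^{i\bar{j}}-\rho^i\rho^{\bar{j}}=(-\rho)^{-1}I^{-1}\phi^{i\bar{j}}$ is positive definite matches the remark in the proof of Proposition~\ref{prop.4.1}, so no gap there.
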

\begin{proof}
First the bottom spectrum of $P_2$ is positive since $R_{\theta}>0$. Second we show that $P_{2\gamma}$ has no zero spectrum. By \cite{Po} and \cite{GS}, $P_{2\gamma}$ is hypoelliptic and Fredholm on suitable function spaces. Hence zero spectrum implies zero eigenvalue. Assume $P_{2\gamma}f=0$. Then $f\in C^{\infty}(M)$. By part (b) of Proposition \ref{prop.3.2}, $P_{2\gamma}f\geq 0$ implies $f\geq 0$ and $P_{2\gamma}(-f)\geq 0$ implies $-f\geq 0 $. Therefore $f=0$. Finally by continuity argument,  the bottom spectrum of $P_{2\gamma}$ is positive for all $\gamma\in(0,1)$. 
\end{proof}

A similar proof as Proposition \ref{prop.3.2} shows that
\begin{proposition} 
Assume (A1)-(A4). 
If the Webster scalar curvature $R_{\theta}\geq 0$  on $M$, then for $\gamma\in(0,1)$ and $f\in C^{\infty}(M)$,  
\begin{itemize}
\item[(a)] if $P_{\gamma}f>0$ on $M$ then $f>0$ on $M$;
\item[(b)] if $P_{\gamma}f\geq 0$ on $M$  then either $f\geq 0$ on $M$ or 
$$
\min_M f=f(p)<0, \quad P_{\gamma}f|_p=0, \quad R_{\theta}(p)=0. 
$$
\end{itemize}
\end{proposition}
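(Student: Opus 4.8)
The plan is to rerun the comparison argument of Proposition \ref{prop.3.2} for part (a), and to sharpen it for part (b) by tracking the boundary asymptotics one order further, so that the weaker hypothesis $R_\theta\geq 0$ produces the stated alternative at the minimum rather than an outright contradiction. First I would set $s=\frac{n+1+\gamma}{2}$, noting $n+1-s=\frac{n+1-\gamma}{2}>0$ and $2s-n-2=\gamma-1<0$ for $\gamma\in(0,1)$, and let $u$ solve $\triangle_\phi u-s(n+1-s)u=0$ with $u\sim(-\rho)^{n+1-s}f$. Using the indicial analysis of (\ref{laplaceasymp.2}) together with Lemma \ref{lem.3.1} (which identifies $r_0=\frac{1}{n(n+1)}R_\theta$), I would record the expansion
\[
\frac{u}{v_s}=f+(-\rho)^\gamma S(s)f+(-\rho)\,u_1+\mathcal{O}\big((-\rho)^{1+\gamma}\big),\qquad u_1=\tfrac{1}{1-\gamma}\Big(\tfrac12\triangle_b f+\tfrac{(n+1-s)^2}{n(n+1)}R_\theta f\Big),
\]
with $v_s=(-\rho)^{n+1-s}$. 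Since $\tfrac{\Gamma(\gamma)}{\Gamma(-\gamma)}<0$ for $\gamma\in(0,1)$, the renormalisation (\ref{rescatteringop}) flips signs pointwise, so that $P_{2\gamma}f>0$ (resp. $\geq0$, $=0$) is equivalent to $S(s)f<0$ (resp. $\leq0$, $=0$).

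The core tool is the minimum principle applied to $\psi=u/v_s$ through the comparison equation (\ref{compareeq.2}), whose zeroth-order coefficient is $s(n+1-s)-\tfrac{\triangle_\phi v_s}{v_s}=-(n+1-s)^2(-\rho)I$. This is strictly negative in $\mathring X$ because $I>0$ there by Lemma \ref{lem.3.2}; this is the only place the hypothesis $R_\theta\geq 0$ enters. Consequently $\psi$ admits no negative interior minimum, so if $\min_M f=f(p)\leq 0$ then $\psi\geq f(p)$ on $X$, i.e. $u\geq f(p)v_s$. For part (a), where $P_{2\gamma}f>0$ gives $S(s)f<0$ everywhere, restricting $u\geq f(p)v_s$ to the collar over $p$ and reading off the leading $(-\rho)^\gamma$ coefficient yields $S(s)f|_p\geq 0$, contradicting $S(s)f<0$; hence $\min_M f>0$. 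This step is identical to the strict case, since only $R_\theta\geq 0$ and $I>0$ are invoked.

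For part (b) I would assume $P_{2\gamma}f\geq 0$, i.e. $S(s)f\leq 0$, and $\min_M f=f(p)<0$ (otherwise $f\geq 0$ and we are done), and again obtain $u\geq f(p)v_s$. Dividing by $v_s$ over the collar through $p$ gives $(-\rho)^\gamma S(s)f|_p+(-\rho)\,u_1(p)+\mathcal{O}((-\rho)^{1+\gamma})\geq 0$. Because $0<\gamma<1$ separates the orders, the leading $(-\rho)^\gamma$ term forces $S(s)f|_p\geq 0$, which together with $S(s)f\leq 0$ gives $S(s)f|_p=0$, i.e. $P_{2\gamma}f|_p=0$. With that term annihilated, the next-order $(-\rho)^1$ term forces $u_1(p)\geq 0$. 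On the other hand, at the interior minimum $p$ of $f$ on the closed manifold $M$ one has $\triangle_b f(p)\leq 0$, while $R_\theta(p)\geq 0$ and $f(p)<0$ give $R_\theta(p)f(p)\leq 0$; as $\tfrac{1}{1-\gamma}$ and $\tfrac{(n+1-s)^2}{n(n+1)}$ are positive, this forces $u_1(p)\leq 0$. Hence $u_1(p)=0$, and being a sum of two nonpositive terms it splits as $\triangle_b f(p)=0$ and $R_\theta(p)f(p)=0$; since $f(p)<0$ this gives $R_\theta(p)=0$, which is exactly the stated alternative.

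The hard part will be the two-step asymptotic extraction in part (b): unlike the case $R_\theta>0$, where $u_1(p)<0$ delivers an immediate contradiction, here $u_1(p)$ is only nonpositive, so one must first peel off the $(-\rho)^\gamma$ coefficient to learn $P_{2\gamma}f|_p=0$ and only then read the $(-\rho)^1$ coefficient to learn $u_1(p)=0$. Making this rigorous requires that the remainder $\mathcal{O}((-\rho)^{1+\gamma})$ be uniform on the collar over a neighbourhood of $p$, so that both coefficient inequalities are genuinely valid, and that the two summands of $u_1(p)$ be separately nonpositive at the minimum; the vanishing of $u_1(p)$ then encodes both conclusions $P_{2\gamma}f|_p=0$ and $R_\theta(p)=0$ simultaneously.
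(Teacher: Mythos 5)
Your argument is correct and is essentially the proof the paper intends: it runs the comparison of $u/v_s$ against the test function $v_s$ exactly as in Proposition \ref{prop.3.2} (using Lemma \ref{lem.3.2} to get $I>0$ from $R_\theta\geq 0$, hence a strictly negative zeroth-order coefficient in (\ref{compareeq.2}) and no negative interior minimum), and your two-step extraction of the $(-\rho)^\gamma$ and then the $(-\rho)$ coefficients at the boundary minimum is precisely the refinement needed to turn the strict-case contradiction into the stated alternative $P_{2\gamma}f|_p=0$, $R_\theta(p)=0$. The paper only says ``a similar proof as Proposition \ref{prop.3.2} shows this''; your write-up supplies the omitted details correctly, the one cosmetic slip being the remark that $R_\theta\geq 0$ enters only through $I>0$, whereas you also use it at the minimum point to get $R_\theta(p)f(p)\leq 0$.
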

%

%
%

%
%

\begin{proposition}
Assume (A1)-(A4). 
If the Webster scalar curvature $R_{\theta}\geq 0$ on $M$, then
$$
S\left(\frac{n+1}{2}\right)=-Id. 
$$
\end{proposition}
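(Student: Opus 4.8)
The plan is to evaluate the meromorphic continuation of $S(s)$ at the center $s_0=\frac{n+1}{2}$ by a limiting argument, exploiting that the two indicial roots of $\triangle_{\phi}-\frac{(n+1)^2}{4}$ both collapse to $\frac{n+1}{2}$ there. I would set $\gamma=2s-(n+1)$, so that $s=\frac{n+1+\gamma}{2}$, $n+1-s=\frac{n+1-\gamma}{2}$ and $s(n+1-s)=\frac{(n+1)^2-\gamma^2}{4}$. Since $\sigma_{pp}(\triangle_{\phi})=\emptyset$ and $R(s)$ is analytic at $s_0$ by the preceding Proposition, for each fixed $f\in C^{\infty}(M)$ and all small $\gamma\neq 0$ there is a unique solution $u_s$ of $\triangle_{\phi}u_s-s(n+1-s)u_s=0$ with $u_s=(-\rho)^{n+1-s}F_s+(-\rho)^{s}G_s$, $F_s|_M=f$, $G_s|_M=S(s)f$ and $F_s,G_s\in C^{\infty}(X)$ (no logarithms, as $|\gamma|<1$ keeps $2s-(n+1)\notin\mathbb{N}$ and the metric is of Bergman type). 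Moreover $s\mapsto u_s$ and $s\mapsto S(s)$ are holomorphic near $s_0$, so the limit $u_{s_0}:=\lim_{\gamma\to 0}u_s$ is a genuine solution of $\triangle_{\phi}u-\frac{(n+1)^2}{4}u=0$.

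Next I would extract the leading boundary behaviour of $u_{s_0}$. Factoring out $(-\rho)^{s_0}$ gives
\[
u_s=(-\rho)^{s_0}\Big[(-\rho)^{-\gamma/2}F_s+(-\rho)^{\gamma/2}G_s\Big],
\]
and expanding $(-\rho)^{\mp\gamma/2}=1\mp\tfrac{\gamma}{2}\log(-\rho)+O(\gamma^2)$ together with the holomorphic expansions $F_s=F_0+O(\gamma)$, $G_s=G_0+O(\gamma)$ shows that all logarithmic contributions enter only at order $\gamma$. Hence the $O(\gamma^0)$ limit carries no logarithm: $u_{s_0}\in(-\rho)^{s_0}C^{\infty}(X)$, with leading boundary coefficient $\big(F_0+G_0\big)\big|_M=f+S(s_0)f$, where I have used $F_s|_M=f$ for all $s$ and $G_s|_M=S(s)f\to S(s_0)f$. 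The one point requiring care is the uniformity in $\gamma$ of the polyhomogeneous expansion of $u_s$ near $\gamma=0$: one must check that no extra logarithm is produced from the lower-order expansion coefficients, which is guaranteed because the metric is even in the sense of \cite{EMM}, \cite{GS} and the nearest resonance $2s-(n+1)\in\mathbb{N}$ occurs only at $\gamma=1$.

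Finally I would invoke uniqueness. By Lemma \ref{lem.3.6} (whose maximum-principle argument rests on Lemma \ref{lem.3.5}, already valid for $R_{\theta}\geq 0$, and on the strict interior positivity $I>0$ of Lemma \ref{lem.3.2}), any solution of $\triangle_{\phi}u-\frac{(n+1)^2}{4}u=0$ lying in $(-\rho)^{s_0}C^{2}(X)$ vanishes identically. Applying this to $u_{s_0}$ forces $u_{s_0}\equiv 0$, so in particular its leading coefficient satisfies $f+S(s_0)f=0$; as $f$ is arbitrary, $S\!\left(\frac{n+1}{2}\right)=-\mathrm{Id}$. I expect the main obstacle to be precisely the log-free claim for $u_{s_0}$: establishing rigorously that the collapse of the two indicial solutions pushes every logarithmic term into $\partial_{\gamma}u_s|_{\gamma=0}$ and leaves $u_{s_0}$ smooth to leading order. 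The consistency check $S(s_0)^2=\mathrm{Id}$, coming from the functional equation $S(n+1-s)S(s)=\mathrm{Id}$, agrees with the answer $-\mathrm{Id}$ but by itself cannot fix the sign; it is the uniqueness lemma that selects it.
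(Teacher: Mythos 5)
Your argument is correct, and it reaches the conclusion by a genuinely more direct route than the paper. The paper first deduces from the analyticity of $R(s)$ at $s_0=\frac{n+1}{2}$ and the functional equation $S(s)S(n+1-s)=\mathrm{Id}$ that $S(s_0)^2=\mathrm{Id}$, and then invokes self-adjointness together with Lemma \ref{lem.3.6} to exclude the eigenvalue $+1$; the step it leaves implicit is exactly the one you make explicit, namely that an eigenfunction $f$ with $S(s_0)f=f$ would produce a nonzero log-free solution in $(-\rho)^{s_0}C^{2}(X)$, contradicting the uniqueness lemma. You instead pass to the limit of the holomorphic Poisson family $u_s$ directly, observe that the collapse of the two indicial roots pushes every logarithmic term into the $O(\gamma)$ part of the expansion, and apply Lemma \ref{lem.3.6} to the limit to obtain $f+S(s_0)f=0$ for all $f$ at once. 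This dispenses with both the functional equation and self-adjointness (which you correctly relegate to a consistency check, since $S(s_0)^2=\mathrm{Id}$ alone cannot fix the sign), at the cost of the uniformity-of-expansion issue you flag; that issue is settled by the analyticity of $R(s)$ at $s_0$ as a map $\dot{C}^{\infty}(X)\to C^{\infty}(X)$ together with the evenness of the metric in the sense of \cite{EMM} and \cite{GS}, so it is a standard technical point rather than a gap. Your observation that Lemma \ref{lem.3.6} remains valid under $R_{\theta}\geq 0$ (because Lemma \ref{lem.3.5} and the interior positivity $I>0$ of Lemma \ref{lem.3.2} only require nonnegativity) also repairs a detail the paper glosses over, since the proposition is stated under the weaker hypothesis while Lemma \ref{lem.3.6} is stated for $R_{\theta}>0$.
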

\begin{proof}
Since the resolvent $R(s)$ is analytic at $s=\frac{n+1}{2}$, then $S(s)$ ia analytic at $s=\frac{n+1}{2}$. For $s=\frac{n+1}{2}+\sqrt{-1} \sigma$ with
 $\sigma\neq 0$, we have 
$$
S^*(s)=S(\bar{s})=S(n-s), \quad S(s)S(n-s)=Id. 
$$
Hence by the meromorphic extension,
$$
\left[S\left(\frac{n+1}{2}\right)\right]^2=Id. 
$$
Finally, by Lemma \ref{lem.3.6} and self-adjoint property of $S(\frac{n+1}{2})$,  we have $S(\frac{n+1}{2})=-Id$. 
\end{proof}

\vspace{0.2in}
\section{Energy Identity}

Throughout of this section, we fix $s=\frac{n+1+\gamma}{2}$ with $\gamma\in(0,1)$. Assume (A1)-(A4) and
\begin{itemize}
\item[\textbf{(A5)}] The boundary CR structure $(M,J,\theta)$ has positive Webster scalar curvature, i.e.  $R_{\theta}>0$ on $M$. 
\end{itemize}
Hence by Lemma \ref{lem.3.3}, $\rho$ is strictly plurisubharmonic all over $X$. Now we have two K\"{a}hler metrics $g_{\phi}$ and $g_{\rho}$,  associated with K\"{a}hler forms $\omega_{\phi}$ and $\omega_{\rho}$ respectively. In local holomorphic coordinates, 
$$
\omega_{\rho}=\frac{\sqrt{-1}}{2} \rho_{i\bar{j}} dz^i\wedge d\bar{z}^j, \quad
\omega_{\phi}=\frac{\sqrt{-1}}{2} \phi_{i\bar{j}} dz^i\wedge d\bar{z}^j. 
$$
In our convention, we take 
$$
\begin{gathered}
\triangle_{\phi} u=-\phi^{i\bar{j}}u_{i\bar{j}}, \quad
\triangle_{\rho} u=-\rho^{i\bar{j}}u_{i\bar{j}}
\\
\mathrm{dvol}_{\phi} =(2\omega_{\phi})^{n+1}, \quad
\mathrm{dvol}_{\rho} =(2\omega_{\rho})^{n+1}.
\end{gathered}
$$ 
More over, near the boundary we have a second set of frame 
 $\{W_{\alpha}, W_{\bar{\alpha}}, \Xi, \overline{\Xi}: \alpha=1,..., n\}$ with coframe $\{\theta^{\alpha}, \theta^{\bar{\alpha}}, \partial\rho, \overline{\partial}\rho: \alpha=1,..., n\}$, which are compatible with the boundary CR-structure on each level set $M_{\epsilon}$, such that
$$
\begin{aligned}
&\omega_{\rho}= \frac{\sqrt{-1}}{2}\left(h_{\alpha\bar{\beta}}\theta^{\alpha}\wedge \theta^{\bar{\beta}}
+r\partial \rho\wedge \overline{\partial}\rho\right), \quad
\omega_{\phi}= \frac{\sqrt{-1}}{2}\left(\frac{1}{-\rho} h_{\alpha\bar{\beta}}\theta^{\alpha}\wedge \theta^{\bar{\beta}}+
\frac{1-r\rho}{\rho^2}\partial \rho\wedge \overline{\partial}\rho\right). 
\end{aligned}
$$

We defined three functions:
\begin{itemize}
\item[(a)] Let $v=(-\rho)^{n+1-s}$. Then by Lemma \ref{lem.3.4}
$$
\begin{gathered}
\frac{\triangle_{\phi}v}{v}-s(n+1-s)=(n+1-s)^2(-\rho)I>0, 
\quad\textrm{where}
\\
I=e^{\phi}(1-|d\phi|^2_{\phi})=\frac{1}{|d\rho|^2_{\rho}-\rho}>0, \quad
I=\frac{r}{1-r\rho}\ \textrm{for $\rho\sim 0$. }
\end{gathered}
$$

\item[(b)]  Let $w \sim (-\rho)^{n+1-s}$ as $\rho\rightarrow 0$ and satisfy $\triangle_{\phi}w-s(n+1-s)w=0$. Then
$$
\begin{gathered}
w=(-\rho)^{n+1-s} \left[1+(-\rho)^{\gamma}w_{\gamma}+(-\rho)w_1 +\mathcal{O}((-\rho)^{1+\gamma})\right], 
\quad\textrm{where}
\\
w_{\gamma}=S(s)1=2^{-2\gamma}\frac{\Gamma(-\gamma)}{\Gamma(\gamma)}Q_{2\gamma}, \quad
w_1=\frac{-1}{2s-n-2}\frac{(n+1-s)^2}{n(n+1)}R_{\theta}. 
\end{gathered}
$$
Moreover, $w>0$ since $\mathrm{spec}(\triangle_{\phi})=[(n+1)^2/4, \infty)$ by Proposition \ref{prop.3.1}. 
\item[(c)] Let $u\sim  (-\rho)^{n+1-s}f$ as $\rho\rightarrow 0$ and satisfy $\triangle_{\phi}u-s(n+1-s)u=0$. Then
$$
\begin{gathered}
u=(-\rho)^{n+1-s} \left[f+(-\rho)^{\gamma}u_{\gamma}+(-\rho)u_1 +\mathcal{O}((-\rho)^{1+\gamma})\right]
\quad\textrm{where}
\\
u_{\gamma}=S(s)f=2^{-2\gamma}\frac{\Gamma(-\gamma)}{\Gamma(\gamma)} P_{2\gamma}f, 
\quad
u_1=\frac{-1}{2s-n-2}\left(\frac{1}{2}\triangle_b+\frac{(n+1-s)^2}{n(n+1)}R_{\theta}\right)f. 
\end{gathered}
$$
\end{itemize}
\begin{lemma}\label{lem.4.1}
Assume (A1)-(A5) and 
let $u=vU$. Then
$$
\begin{cases}
\triangle_{\rho}U+I 
\left[\rho^i\rho^{\bar{j}}U_{i\bar{j}}+(n+1-s)(\rho^iU_i+\rho^{\bar{j}}U_{\bar{j}})+(n+1-s)^2U\right]
=0, 
\\
(-\rho)^{1-\gamma} N(U)|_{M}=-2\gamma u_{\gamma}, 
\end{cases}
$$
where $N=\Xi+\overline{\Xi}$ satisfies $d\rho(N)=2$. 
\end{lemma}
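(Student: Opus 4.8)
The plan is to read $u=vU$ with $v=e^{-(n+1-s)\phi}$ as a conjugation of the operator $\triangle_\phi-s(n+1-s)$, derive the conjugated equation in the $\phi$-data, and then translate it into the $\rho$-data using the algebraic identities of Section~2. Since $v$ is an explicit exponential, I would first expand $\triangle_\phi(vU)=-\phi^{i\bar{j}}(vU)_{i\bar{j}}$ by the Leibniz rule, using $v_i=-(n+1-s)\phi_iv$ and $v_{i\bar{j}}=v\big[(n+1-s)^2\phi_i\phi_{\bar{j}}-(n+1-s)\phi_{i\bar{j}}\big]$, together with the contractions $\phi^{i\bar{j}}\phi_{i\bar{j}}=n+1$, $\phi^{i\bar{j}}\phi_i=\phi^{\bar{j}}$ and $\phi^{i\bar{j}}\phi_{\bar{j}}=\phi^i$. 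After subtracting $s(n+1-s)vU$ and dividing by $v$, the coefficient of $U$ collapses to $(n+1-s)^2(1-|d\phi|^2_\phi)$, producing
\[
\triangle_\phi U+(n+1-s)(\phi^iU_i+\phi^{\bar{j}}U_{\bar{j}})+(n+1-s)^2(1-|d\phi|^2_\phi)U=0.
\]

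Next I would convert each $\phi$-quantity into a $\rho$-quantity. From (\ref{eq.2.3}) one has $\phi^{i\bar{j}}=(-\rho)(\rho^{i\bar{j}}-I\rho^i\rho^{\bar{j}})$ with $I=(|d\rho|^2_\rho-\rho)^{-1}$, which gives $\triangle_\phi U=(-\rho)\big(\triangle_\rho U+I\rho^i\rho^{\bar{j}}U_{i\bar{j}}\big)$. For the first-order term I would use $\phi_{\bar{j}}=\rho_{\bar{j}}/(-\rho)$ to obtain $\phi^i=\rho_{\bar{j}}(\rho^{i\bar{j}}-I\rho^i\rho^{\bar{j}})=\rho^i(1-I|d\rho|^2_\rho)$, and then the elementary identity $1-I|d\rho|^2_\rho=(-\rho)I$ yields $\phi^i=(-\rho)I\rho^i$ and similarly $\phi^{\bar{j}}=(-\rho)I\rho^{\bar{j}}$. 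For the zeroth-order term the relation $I=e^\phi(1-|d\phi|^2_\phi)=(-\rho)^{-1}(1-|d\phi|^2_\phi)$ gives $1-|d\phi|^2_\phi=(-\rho)I$. Substituting these three expressions into the conjugated equation produces a common factor $(-\rho)$; dividing it out gives exactly
\[
\triangle_\rho U+I\left[\rho^i\rho^{\bar{j}}U_{i\bar{j}}+(n+1-s)(\rho^iU_i+\rho^{\bar{j}}U_{\bar{j}})+(n+1-s)^2U\right]=0.
\]

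For the boundary condition I would begin from the expansion in item~(c): since $u=vU=(-\rho)^{n+1-s}U$, it gives $U=f+(-\rho)^\gamma u_\gamma+(-\rho)u_1+\mathcal O((-\rho)^{1+\gamma})$, where $f,u_\gamma,u_1$ depend only on the boundary variable. In the collar product structure I would write $N=2\partial_\rho+V$ with $V\rho=0$ (forced by $d\rho(N)=2$), so that $V$ is tangent to the level sets $M_\epsilon$ and differentiates only the bounded coefficients, giving an $\mathcal O(1)$ contribution. The normal part yields $2\partial_\rho U=-2\gamma(-\rho)^{\gamma-1}u_\gamma-2u_1+\mathcal O((-\rho)^\gamma)$. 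Multiplying by $(-\rho)^{1-\gamma}$ and letting $\rho\to0$, every term except the one arising from $(-\rho)^\gamma u_\gamma$ is $\mathcal O((-\rho)^{1-\gamma})$ and vanishes, leaving $(-\rho)^{1-\gamma}N(U)|_M=-2\gamma u_\gamma$.

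The Leibniz expansion and the contractions of the first step are routine. I expect the main obstacles to be twofold: first, the bookkeeping of the three algebraic identities $\phi^{i\bar{j}}=(-\rho)(\rho^{i\bar{j}}-I\rho^i\rho^{\bar{j}})$, $\phi^i=(-\rho)I\rho^i$ and $1-|d\phi|^2_\phi=(-\rho)I$, whose whole point is to make the single common factor $(-\rho)$ appear so that division yields the stated form; and second, justifying in the boundary computation that the tangential component $V$ of $N$, together with the $\mathcal O((-\rho)^{1+\gamma})$ remainder, may be differentiated once without spoiling the limit, so that only the $(-\rho)^\gamma u_\gamma$ term survives after scaling by $(-\rho)^{1-\gamma}$.
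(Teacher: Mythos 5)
Your proposal is correct and is essentially the paper's own argument: the paper simply asserts the interior identity as a ``direct computation'' and reads off the boundary term from the expansion of $U$, and your conjugation through the $\phi$-variables followed by the substitutions $\phi^{i\bar{j}}=(-\rho)(\rho^{i\bar{j}}-I\rho^i\rho^{\bar{j}})$, $\phi^i=(-\rho)I\rho^i$, $1-|d\phi|^2_\phi=(-\rho)I$ is exactly the computation being invoked, reproducing the factor $(-\rho)^{n+2-s}$ in the paper's displayed identity. The boundary step, extracting $-2\gamma u_\gamma$ from the $(-\rho)^\gamma$ term after scaling by $(-\rho)^{1-\gamma}$, also matches the paper's ``pick out the global term'' argument.
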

\begin{proof}
Direct computation shows that
$$
\triangle_{\phi}u-s(n+1-s)u=\rho^{n+2-s}\left(
\triangle_{\rho}U+I 
\left[\rho^i\rho^{\bar{j}}U_{i\bar{j}}+(n+1-s)(\rho^iU_i+\rho^{\bar{j}}U_{\bar{j}})+(n+1-s)^2U\right]
\right). 
$$
Using the asymptotic expansion of $U$, we pick out the global term $u_{\gamma}$ as given above. 
\end{proof}
\begin{proposition} \label{prop.4.1}
Assume (A1)-(A5) and 
let $u=vU$. Then
$$
d_{\gamma}\oint_{M}fP_{2\gamma}f   \theta \wedge (d\theta)^n
=\int_X  (-\rho)^{-\gamma} \left\{\left[ (|d\rho|^2_{\rho}-\rho)\rho^{i\bar{j}}
-\rho^i\rho^{\bar{j}}\right] U_iU_{\bar{j}}+ (n+1-s)^2|U|^2 \right\}\mathrm{dvol}_{\rho}\geq 0,  
$$
where 
$$
d_{\gamma}=-\gamma 2^{-2\gamma}\frac{\Gamma(-\gamma)}{\Gamma(\gamma)}>0.
$$
\end{proposition}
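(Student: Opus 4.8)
The plan is to prove the identity by multiplying the equation of Lemma \ref{lem.4.1} by the weight $(-\rho)^{-\gamma}U$, integrating $\mathrm{dvol}_{\rho}$ over the truncated region $X_{\epsilon}=\{-\rho\geq\epsilon\}$, integrating by parts, and letting $\epsilon\to 0$. Write $m=n+1-s$. First I would record the algebraic facts that make the stated integrand appear: using (\ref{eq.2.3}) one gets $(|d\rho|^2_{\rho}-\rho)\rho^{i\bar{j}}-\rho^i\rho^{\bar{j}}=\phi^{i\bar{j}}/((-\rho)I)$, which is positive definite since $H(\phi)$ is positive definite and $I>0$ by Lemma \ref{lem.3.2}; together with $m^2|U|^2\geq 0$ this already yields the inequality $\geq 0$ and pins down the quadratic form on the right-hand side. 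A parallel computation gives $\phi^i=(-\rho)I\rho^i$, whence, with $r=1/|d\rho|^2_{\rho}$, one has $\rho^iU_i+\rho^{\bar{j}}U_{\bar{j}}=|d\rho|^2_{\rho}\,N(U)=\tfrac1r N(U)$; this is what will convert the first-order terms of Lemma \ref{lem.4.1} into a normal derivative at the boundary.

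The cleanest way to see that the bulk contribution is exactly the stated energy $E[U]$ is to pass back to $u=vU$. Since $vU_i=u_i+mu\phi_i$ one finds $v^2\phi^{i\bar{j}}U_iU_{\bar{j}}=\phi^{i\bar{j}}u_iu_{\bar{j}}+mu(\phi^iu_i+\phi^{\bar{j}}u_{\bar{j}})+m^2u^2|d\phi|^2_{\phi}$, and using $\mathrm{dvol}_{\rho}=(-\rho)^{n+2}I\,\mathrm{dvol}_{\phi}$ (from (\ref{eq.2.2})) together with $(-\rho)I=1-|d\phi|^2_{\phi}$, the two $m^2$-contributions recombine to $m^2u^2$. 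Thus $E[U]=\int_X\big[\phi^{i\bar{j}}u_iu_{\bar{j}}+mu(\phi^iu_i+\phi^{\bar{j}}u_{\bar{j}})+m^2u^2\big]\mathrm{dvol}_{\phi}$. On $X_{\epsilon}$ I would integrate each term by parts: the eigenvalue equation $\triangle_{\phi}u=s(n+1-s)u$ gives $\int\phi^{i\bar{j}}u_iu_{\bar{j}}=s(n+1-s)\int u^2+(\mathrm{bdy})$, while the Kähler identity $\partial_i(\det H(\phi)\,\phi^{i\bar{j}})=0$ and $\phi^{i\bar{j}}\phi_{i\bar{j}}=n+1$ give $\int mu(\phi^iu_i+\phi^{\bar{j}}u_{\bar{j}})=-m(n+1)\int u^2+(\mathrm{bdy})$. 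The bulk terms then cancel, since the coefficient is $m\big[s-(n+1)+m\big]=0$; this is the one place where $s+(n+1-s)=n+1$ is essential.

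Consequently $E[U]$ equals the limit of the boundary integrals over $M_{\epsilon}$. Here I would insert the expansion $U=f+(-\rho)^{\gamma}u_{\gamma}+(-\rho)u_1+\mathcal{O}((-\rho)^{1+\gamma})$ and the normalization $(-\rho)^{1-\gamma}N(U)|_M=-2\gamma u_{\gamma}$ of Lemma \ref{lem.4.1}. The weight is arranged so that the flux density behaves like $(-\rho)^{1-\gamma}N(U)\cdot\tfrac1r U\to-\tfrac{2\gamma}{r}u_{\gamma}f$, a finite limit, while the $f\,u_1$ cross term carries an integer power of $\epsilon$ and drops out; the factor $r$ coming from the $r\,\partial\rho\wedge\overline{\partial}\rho$ part of $\omega_{\rho}$ in the induced measure cancels the $\tfrac1r$, so that measure limits to $\theta\wedge(d\theta)^n$. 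After chasing the normalizations one gets $E[U]=-\gamma\oint_M f u_{\gamma}\,\theta\wedge(d\theta)^n$, and substituting $u_{\gamma}=S(s)f=2^{-2\gamma}\tfrac{\Gamma(-\gamma)}{\Gamma(\gamma)}P_{2\gamma}f$ together with $d_{\gamma}=-\gamma 2^{-2\gamma}\tfrac{\Gamma(-\gamma)}{\Gamma(\gamma)}$ yields the claimed identity. The routine parts are the two integrations by parts and the algebraic reductions; the main obstacle is the last step, namely justifying that $E[U]$ is finite so that the truncation and the $\epsilon\to 0$ limit are legitimate, tracking the exact powers of $\epsilon$ so that only the finite $u_{\gamma}$-term survives, and pinning down the overall constant, in particular the normalization of the contact volume $\theta\wedge(d\theta)^n$ on $M_{\epsilon}$ and the factor relating $\triangle_{\phi}=\tfrac14\delta d$ to the energy, so that the constant matches $d_{\gamma}$ exactly.
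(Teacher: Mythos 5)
Your proposal is correct and follows essentially the same route as the paper: both pair the equation with $u$ (equivalently with $(-\rho)^{-\gamma}U$ against $\mathrm{dvol}_{\rho}$), integrate by parts so that the bulk term vanishes because $u$ solves the eigenvalue equation, and read off the boundary contribution $-\gamma\oint_M f u_{\gamma}\,\theta\wedge(d\theta)^n$ from the expansion of $U$ together with $(-\rho)^{1-\gamma}N(U)|_M=-2\gamma u_{\gamma}$ and $\mathrm{dvol}_{\rho}=r\,d\rho\wedge\theta\wedge(d\theta)^n$. The only difference is bookkeeping: the paper integrates by parts directly in the $(U,\mathrm{dvol}_{\rho})$ picture via Lemma \ref{lem.4.1}, whereas you pass back to $(u,\mathrm{dvol}_{\phi})$ and see the bulk cancellation through $m\left[s-(n+1)+m\right]=0$; both rest on the same identities $(|d\rho|^2_{\rho}-\rho)\rho^{i\bar{j}}-\rho^i\rho^{\bar{j}}=(-\rho)^{-1}I^{-1}\phi^{i\bar{j}}$ and $\mathrm{dvol}_{\phi}=I^{-1}(-\rho)^{-(n+2)}\mathrm{dvol}_{\rho}$.
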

\begin{proof}
In local holomorphic coordinates, 
$$
\begin{aligned}
\mathrm{dvol}_{\phi} =&\ 
(\sqrt{-1})^{n+1}\det H(\phi) dz^1\wedge d\bar{z}^1\wedge \cdots \wedge dz^{n+1}\wedge d\bar{z}^{n+1}, 
\\
\mathrm{dvol}_{\rho}=&\  
(\sqrt{-1})^{n+1}\det H(\rho) dz^1\wedge d\bar{z}^1\wedge \cdots \wedge dz^{n+1}\wedge d\bar{z}^{n+1}, 
\end{aligned}
$$
$$
\mathrm{dvol}_{\phi} =\frac{\det H(\phi) }{\det H(\rho) } \mathrm{dvol}_{\rho} =\frac{1}{I(-\rho)^{(n+2)}}
\mathrm{dvol}_{\rho}. 
$$
By Lemma \ref{lem.4.1} and direct computation 
$$
\begin{aligned}
&\  \int_X u(\triangle_{\phi}u-s(n+1-s)u) \mathrm{dvol}_{\phi} \\
=&\ \int_X (-\rho)^{-\gamma} U \left[ (|d\rho|^2_{\rho}-\rho)\triangle_{\rho}U +\rho^i\rho^{\bar{j}} U_{i\bar{j}}+(n+1-s)(\rho^iU_i+\rho^{\bar{j}}U_{\bar{j}})+(n+1-s)^2U
\right] \mathrm{dvol}_{\rho}. 
\end{aligned}
$$
The volume form near boundary can be represented by
$$
\begin{aligned}
\mathrm{dvol}_{\rho}=&\  (\sqrt{-1})^{n+1} r\det [h_{\alpha\bar{\beta}}] \partial\rho \wedge \overline{\partial}\rho \wedge \theta^1\wedge \theta^{\bar{1}}\wedge \cdots \wedge \theta^n\wedge \theta^n
\\ 
=&\   r d\rho\wedge \Theta \wedge \left(\sqrt{-1})^{n} \det [h_{\alpha\bar{\beta}}]  \theta^1\wedge \theta^{\bar{1}}\wedge \cdots \wedge \theta^n\wedge \theta^n \right)
\\
=&\ rd\rho \wedge \theta \wedge (d\theta)^n. 
\end{aligned}
$$
Here $\Theta=\frac{\sqrt{-1}}{2}(\overline{\partial}\rho-\partial\rho)$ and $\theta=\theta_{\epsilon}=i^*_\epsilon \Theta $ with $i_{\epsilon}: M_{\epsilon}\rightarrow X$. 
Integrating by parts shows
$$
\begin{aligned}
&\  \int_X u(\triangle_{\phi}u-s(n+1-s)u) \mathrm{dvol}_{\phi} \\
=&\ \int_X  (-\rho)^{-\gamma} \left\{\left[ (|d\rho|^2_{\rho}-\rho)\rho^{i\bar{j}}
-\rho^i\rho^{\bar{j}}\right] U_iU_{\bar{j}} + (n+1-s)^2U^2 \right\}\mathrm{dvol}_{\rho}
+\oint_{M}\gamma u_{\gamma}f   \theta \wedge (d\theta)^n .
\end{aligned}
$$
Notice that $(|d\rho|^2_{\rho}-\rho)\rho^{i\bar{j}}
-\rho^i\rho^{\bar{j}} =(-\rho)^{-1}I^{-1}\phi^{i\bar{j}}$ is positive definite. Hence
$$
\left[ (|d\rho|^2_{\rho}-\rho)\rho^{i\bar{j}}
-\rho^i\rho^{\bar{j}}\right] U_iU_{\bar{j}} \geq 0.
$$
Recall $\triangle_{\phi}u-s(n+1-s)u=0$. We finish the proof. 
\end{proof}

Similar as the energy identity for adapted metric measure space in \cite{CC}, we have a second energy identity here such that the interior energy has no zero order term. 

\begin{lemma}\label{lem.4.2}
Assume (A1)-(A5). 
Let $w=vW$ and $u=wU'=vWU'$. Then
$$
\begin{cases}
\begin{aligned}
W\triangle_{\rho} U' &-\rho^{i\bar{j}}(W_iU'_{\bar{j}}+W_{\bar{j}}U'_i)
\\
&+ I\left[
W \rho^i\rho^{\bar{j}}U'_{i\bar{j}} +\rho^i\rho^{\bar{j}}(W_iU'_{\bar{j}}+W_{\bar{j}}U'_i)
+(n+1-s)(\rho^iU'_i+\rho^{\bar{j}}U'_{\bar{j}}) W\right]=0, 
\end{aligned}
\\
(-\rho)^{1-\gamma}N(U')|_{M}=-2\gamma(u_{\gamma}-w_{\gamma}f). 
\end{cases}
$$
\end{lemma}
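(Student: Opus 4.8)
The plan is to reduce everything to Lemma \ref{lem.4.1}. Let $\mathcal{L}$ denote the operator appearing there,
$$
\mathcal{L}(\psi)=\triangle_{\rho}\psi+I\left[\rho^i\rho^{\bar{j}}\psi_{i\bar{j}}+(n+1-s)(\rho^i\psi_i+\rho^{\bar{j}}\psi_{\bar{j}})+(n+1-s)^2\psi\right],
$$
so that $\triangle_{\phi}(v\psi)-s(n+1-s)(v\psi)=\rho^{n+2-s}\mathcal{L}(\psi)$ for every $\psi$. Writing $w=vW$ and $u=vU$, the two eigenfunction equations $\triangle_{\phi}w-s(n+1-s)w=0$ and $\triangle_{\phi}u-s(n+1-s)u=0$ become the pointwise identities $\mathcal{L}(W)=0$ and $\mathcal{L}(U)=0$. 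Since $w>0$ and $v>0$ we have $W>0$, so $U'=U/W$ is well defined and $U=WU'$.

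The interior identity is then a Leibniz computation. First I would expand $\mathcal{L}(WU')$, using $(WU')_{i\bar{j}}=W_{i\bar{j}}U'+W_iU'_{\bar{j}}+W_{\bar{j}}U'_i+WU'_{i\bar{j}}$ for the second order part and the ordinary product rule for the first order part. Every term in which $U'$ multiplies a differential operator acting only on $W$ assembles into $U'\mathcal{L}(W)$, which vanishes; here the zeroth order piece $I(n+1-s)^2WU'$ must be assigned entirely to this block. The first order coefficients $I(n+1-s)\rho^i$ and $I(n+1-s)\rho^{\bar{j}}$ produce no cross terms, since a first order operator obeys the plain product rule. Hence the only surviving mixed terms come from the second order coefficient $a^{i\bar{j}}=-\rho^{i\bar{j}}+I\rho^i\rho^{\bar{j}}$, namely $a^{i\bar{j}}(W_iU'_{\bar{j}}+W_{\bar{j}}U'_i)$. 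Setting $\mathcal{L}(WU')=\mathcal{L}(U)=0$ and splitting $a^{i\bar{j}}$ into its $-\rho^{i\bar{j}}$ part (which sits outside the bracket) and its $I\rho^i\rho^{\bar{j}}$ part (which sits inside) reproduces the first displayed equation verbatim, with no zeroth order term in $U'$.

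For the Neumann condition I would differentiate $U=WU'$ along $N=\Xi+\overline{\Xi}$, giving $N(U)=U'N(W)+WN(U')$, multiply by $(-\rho)^{1-\gamma}$ and restrict to $M$. From the expansions $W=1+(-\rho)^{\gamma}w_{\gamma}+\cdots$ and $U=f+(-\rho)^{\gamma}u_{\gamma}+\cdots$ one reads off $W|_M=1$ and $U'|_M=f$, while Lemma \ref{lem.4.1} applied to $w=vW$ and to $u=vU$ gives $(-\rho)^{1-\gamma}N(W)|_M=-2\gamma w_{\gamma}$ and $(-\rho)^{1-\gamma}N(U)|_M=-2\gamma u_{\gamma}$. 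Because $\gamma\in(0,1)$, the remaining terms in $(-\rho)^{1-\gamma}N(\cdot)$ all carry a positive power of $(-\rho)$ and vanish in the limit, so rearranging yields $(-\rho)^{1-\gamma}N(U')|_M=-2\gamma(u_{\gamma}-w_{\gamma}f)$.

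The step needing genuine care, rather than routine algebra, is the bookkeeping of the zeroth and first order terms in the Leibniz expansion: one must check that the $(n+1-s)^2$ term and the two first order terms contribute nothing beyond $U'\mathcal{L}(W)$ and the clean block $W\triangle_{\rho}U'+I[W\rho^i\rho^{\bar{j}}U'_{i\bar{j}}+(n+1-s)(\rho^iU'_i+\rho^{\bar{j}}U'_{\bar{j}})W]$, so that the resulting identity is free of any zeroth order $U'$ term. This cancellation is precisely the feature that, as in Case--Chang \cite{CC}, makes the second energy identity of Theorem \ref{thm.2}(b) carry no zero order contribution. A secondary technical point is legitimising the term-by-term differentiation of the conormal expansions of $W$ and $U$ near $M$ in the presence of their non-smooth $(-\rho)^{\gamma}$ terms, which is justified by the even (Bergman-type) structure of the metric.
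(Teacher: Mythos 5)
Your proposal is correct and follows essentially the same route as the paper: the paper's ``direct computation'' of $\triangle_{\phi}u-s(n+1-s)u$ for $u=vWU'$ is exactly your Leibniz expansion of the operator from Lemma \ref{lem.4.1}, with the block proportional to $U'$ vanishing because $w=vW$ solves the eigenvalue equation, and the boundary condition is read off from the expansions of $u$ and $w$ just as you do. You have merely made explicit the bookkeeping that the paper leaves implicit.
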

\begin{proof} By direct computation
$$
\begin{aligned}
\triangle_{\phi}u-s(n+1-s)u=&\ 
\rho^{n+2-s}\left(
W\triangle_{\rho} U' -\rho^{i\bar{j}}(W_iU'_{\bar{j}}+W_{\bar{j}}U'_i)\right)+
\\
&\ \rho^{n+2-s}I\left[
W \rho^i\rho^{\bar{j}}U'_{i\bar{j}} +\rho^i\rho^{\bar{j}}(W_iU'_{\bar{j}}+W_{\bar{j}}U'_i)
+(n+1-s)(\rho^iU'_i+\rho^{\bar{j}}U'_{\bar{j}}) W
\right].
\end{aligned}
$$
From the asymptotical expansion of $u$ and $w$, we can calculate  the global term in $U'$. 
\end{proof}
\begin{proposition}\label{prop.4.2}
Assume (A1)-(A5). 
Let $w=vW$ and $u=wU'=vWU'$. Then
$$
d_{\gamma}\oint_{M}(fP_{\gamma}f -Q_{\gamma}f^2 ) \theta \wedge (d\theta)^n
=\int_X  (-\rho)^{-\gamma}W^2\left[ (|d\rho|^2_{\rho}-\rho)\rho^{i\bar{j}}
-\rho^i\rho^{\bar{j}}\right] U'_iU'_{\bar{j}} \mathrm{dvol}_{\rho} \geq 0, 
$$
where $d_{\gamma}$ is the same as in Proposition \ref{prop.4.1}. 
\end{proposition}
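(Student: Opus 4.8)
The plan is to run the same argument as in Proposition \ref{prop.4.1}, but with the factorisation $u=wU'=vWU'$ and Lemma \ref{lem.4.2} in place of Lemma \ref{lem.4.1}. Since $u$ solves $\triangle_\phi u-s(n+1-s)u=0$, the quantity $\int_X u(\triangle_\phi u-s(n+1-s)u)\,\mathrm{dvol}_\phi$ vanishes. First I would substitute the expression for $\triangle_\phi u-s(n+1-s)u$ from the proof of Lemma \ref{lem.4.2}, convert $\mathrm{dvol}_\phi=\frac{1}{I(-\rho)^{n+2}}\mathrm{dvol}_\rho$, and collect the resulting power $(-\rho)^{-\gamma}$ exactly as in Proposition \ref{prop.4.1}. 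Writing $A^{i\bar j}=(|d\rho|^2_\rho-\rho)\rho^{i\bar j}-\rho^i\rho^{\bar j}=(-\rho)^{-1}I^{-1}\phi^{i\bar j}$, a direct simplification collapses $\frac{1}{I}$ times the Lemma \ref{lem.4.2} operator into $-A^{i\bar j}(WU'_{i\bar j}+W_iU'_{\bar j}+W_{\bar j}U'_i)+(n+1-s)W(\rho^iU'_i+\rho^{\bar j}U'_{\bar j})$; the key structural point is that this carries no zeroth-order term in $U'$, which is precisely what removes the $(n+1-s)^2|U|^2$ contribution present in Proposition \ref{prop.4.1}.

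Next I would integrate by parts on $X_\epsilon=\{\rho\leq-\epsilon\}$ and let $\epsilon\to0$. Moving one derivative onto $U'$ produces the interior term $\int_X(-\rho)^{-\gamma}W^2A^{i\bar j}U'_iU'_{\bar j}\,\mathrm{dvol}_\rho$, while the first-order pieces---the cross terms $A^{i\bar j}(W_iU'_{\bar j}+W_{\bar j}U'_i)$ and the terms $(n+1-s)W(\rho^iU'_i+\rho^{\bar j}U'_{\bar j})$---cancel against the contributions obtained by differentiating the weight $(-\rho)^{-\gamma}W^2$ and the coefficients. Conceptually this cancellation says that $(-\rho)^{-\gamma}W^2$ times the $U'$-operator is in divergence form with quadratic form $A^{i\bar j}U'_iU'_{\bar j}$, a consequence of $w$ solving the homogeneous equation. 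Because only the normal derivative of $U'$ survives on the boundary, the condition $(-\rho)^{1-\gamma}N(U')|_M=-2\gamma(u_\gamma-w_\gamma f)$ together with $W|_M=1$ and $U'|_M=f$ makes the $M_\epsilon$-integral converge to $\oint_M\gamma(u_\gamma-w_\gamma f)f\,\theta\wedge(d\theta)^n$.

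Since the left-hand side is zero, the interior integral equals minus this boundary integral. Substituting $u_\gamma=2^{-2\gamma}\frac{\Gamma(-\gamma)}{\Gamma(\gamma)}P_{2\gamma}f$ and $w_\gamma=2^{-2\gamma}\frac{\Gamma(-\gamma)}{\Gamma(\gamma)}Q_{2\gamma}$ yields $d_\gamma\oint_M(fP_{2\gamma}f-Q_{2\gamma}f^2)\,\theta\wedge(d\theta)^n=\int_X(-\rho)^{-\gamma}W^2A^{i\bar j}U'_iU'_{\bar j}\,\mathrm{dvol}_\rho$. The nonnegativity is then immediate: $A^{i\bar j}=(-\rho)^{-1}I^{-1}\phi^{i\bar j}$ is positive definite by (A2) and Lemma \ref{lem.3.2} (which gives $I>0$), $W=w/v>0$ because $w>0$, and $(-\rho)^{-\gamma}>0$, so the integrand is $\geq0$.

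I expect the main obstacle to be the integration-by-parts bookkeeping with the singular weight $(-\rho)^{-\gamma}$: one must justify that the interior integral converges (it does, since the singularity is integrable for $\gamma<1$), verify the exact cancellation of the first-order and weight-derivative terms---which uses the relations $\rho_{\bar j}=(-\rho)\phi_{\bar j}$, the identity $\gamma=2s-(n+1)$, and the $W$-equation from Lemma \ref{lem.4.1} applied with Dirichlet data $1$---and confirm that the $M_\epsilon$ boundary term carries exactly the power of $(-\rho)$ needed to converge to the stated integral. As an alternative that sidesteps some of this, one could instead start from Proposition \ref{prop.4.1} with $U=WU'$, expand $A^{i\bar j}U_iU_{\bar j}+(n+1-s)^2|U|^2$, and evaluate the leftover terms by testing the $W$-equation against $\frac{1}{I}(-\rho)^{-\gamma}W(U')^2$; this isolates precisely the $d_\gamma\oint_M Q_{2\gamma}f^2$ contribution and leaves the clean gradient term.
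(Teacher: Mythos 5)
Your proposal is correct and follows essentially the same route as the paper: test the equation against $u$, use the factorisation $u=vWU'$ and Lemma \ref{lem.4.2} to put the operator in divergence form with quadratic form $\left[(|d\rho|^2_{\rho}-\rho)\rho^{i\bar j}-\rho^i\rho^{\bar j}\right]U'_iU'_{\bar j}$ and no zeroth-order term, integrate by parts to produce the boundary term $\oint_M\gamma(u_\gamma-w_\gamma f)f\,\theta\wedge(d\theta)^n$, and conclude positivity from $W>0$ and the positive definiteness of $(-\rho)^{-1}I^{-1}\phi^{i\bar j}$. The paper's own proof is just the one-line remark that the computation is "similar to Proposition \ref{prop.4.1}," so your write-up actually supplies more of the bookkeeping than the original.
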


\begin{proof} A similar computation as in the proof of Proposition \ref{prop.4.1} shows
$$
\begin{aligned}
&\  \int_X u(\triangle_{\phi}u-s(n+1-s)u) \mathrm{dvol}_{\phi} \\
=&\ \int_X  (-\rho)^{-\gamma} W^2 \left[ (|d\rho|^2_{\rho}-\rho)\rho^{i\bar{j}}
-\rho^i\rho^{\bar{j}}\right] U'_iU'_{\bar{j}}  \mathrm{dvol}_{\rho}
+\oint_{M}\gamma (u_{\gamma}-w_{\gamma}f)f   \theta \wedge (d\theta)^n. 
\end{aligned}
$$
\end{proof}

\begin{corollary}\label{cor.4.1}
Assume (A1)-(A5). 
For $f\in C^{\infty}(M)$, 
$$
\oint_M f P_{2\gamma}f \theta\wedge (d\theta)^n \geq \oint_M  Q_{2\gamma}f^2 \theta\wedge (d\theta)^n.
$$
\end{corollary}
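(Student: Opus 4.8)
The plan is to read the statement off directly from the energy identity of Proposition \ref{prop.4.2}, since the corollary is essentially a restatement of the nonnegativity already encoded there. First I would recall that Proposition \ref{prop.4.2} gives
$$
d_{\gamma}\oint_{M}(fP_{2\gamma}f -Q_{2\gamma}f^2 )\, \theta \wedge (d\theta)^n
=\int_X  (-\rho)^{-\gamma}W^2\left[ (|d\rho|^2_{\rho}-\rho)\rho^{i\bar{j}}
-\rho^i\rho^{\bar{j}}\right] U'_iU'_{\bar{j}}\, \mathrm{dvol}_{\rho},
$$
with $d_{\gamma}=-\gamma 2^{-2\gamma}\frac{\Gamma(-\gamma)}{\Gamma(\gamma)}>0$ for $\gamma\in(0,1)$. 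Since dividing by the positive constant $d_{\gamma}$ preserves the sign, it suffices to show the right-hand integral is nonnegative and then rearrange.

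The key step is therefore to verify that the interior integrand is pointwise nonnegative, which I would argue factor by factor. In $\mathring{X}$ we have $\rho<0$, so $(-\rho)^{-\gamma}>0$; the scalar $W^2$ is manifestly $\geq 0$; and the Hermitian matrix $(|d\rho|^2_{\rho}-\rho)\rho^{i\bar{j}}-\rho^i\rho^{\bar{j}}$ is positive definite. The last point is exactly the observation recorded in the proof of Proposition \ref{prop.4.1}, namely
$$
(|d\rho|^2_{\rho}-\rho)\rho^{i\bar{j}}-\rho^i\rho^{\bar{j}}=(-\rho)^{-1}I^{-1}\phi^{i\bar{j}},
$$
where $I=e^{\phi}(1-|d\phi|^2_{\phi})>0$ by Lemma \ref{lem.3.2} (using $R_{\theta}>0$ from (A5)) and $[\phi^{i\bar{j}}]$ is positive definite by assumption (A2). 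Contracting this positive-definite form with $U'_iU'_{\bar{j}}$ gives a nonnegative quantity, so the integrand, being a product of nonnegative factors, is $\geq 0$, and hence so is the integral.

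Combining these, $\oint_M (fP_{2\gamma}f-Q_{2\gamma}f^2)\,\theta\wedge(d\theta)^n\geq 0$, which is precisely the claimed inequality after moving $\oint_M Q_{2\gamma}f^2\,\theta\wedge(d\theta)^n$ to the other side. The only point requiring any care is that Proposition \ref{prop.4.2} be applicable to the given $f\in C^{\infty}(M)$, i.e. that the decompositions $w=vW$ and $u=wU'$ together with the associated boundary asymptotics hold; all of this is already established under (A1)--(A5), so there is no genuine obstacle. In short, the corollary is a direct consequence of the sign of the interior energy, and the ``hard part'' was already done in proving Proposition \ref{prop.4.2}.
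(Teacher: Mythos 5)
Your proposal is correct and follows exactly the paper's intended route: Corollary \ref{cor.4.1} is deduced by dividing the energy identity of Proposition \ref{prop.4.2} by the positive constant $d_{\gamma}$ and using the pointwise nonnegativity of the interior integrand, which rests on the positive definiteness of $(|d\rho|^2_{\rho}-\rho)\rho^{i\bar{j}}-\rho^i\rho^{\bar{j}}=(-\rho)^{-1}I^{-1}\phi^{i\bar{j}}$ already recorded in the proof of Proposition \ref{prop.4.1}. The paper leaves this deduction implicit, and your write-up supplies precisely the missing (routine) details.
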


\begin{remark}
In the energy identities in Proposition \ref{prop.4.1} and Proposition \ref{prop.4.2}, near  boundary
$$
\left[ (|d\rho|^2_{\rho}-\rho)\rho^{i\bar{j}}
-\rho^i\rho^{\bar{j}}\right] U_iU_{\bar{j}} 
=\mathcal{O}
\left(
 (-\rho)|N(U)|^2 +(-\rho) |T(U)|^2 
+ h^{\alpha\bar{\beta}}  U_{\alpha}U_{\bar{\beta}}\right).
$$
Here $(-\rho)|N(U)|^2=|\partial_rU|^2$ if we take $(-\rho)=r^2$. The degeneration in characteristic direction $T$, which corresponds to the non compactness of the metric
$$
G=G_{i\bar{j}} dz^i\varodot d\bar{z}^j
=\frac{1}{2}\left(\rho_{i\bar{j}}+\frac{\rho_i\rho_{\bar{j}}}{-\rho}\right) dz^i\varodot d\bar{z}^j, 
$$
is compatible with the Heisenberg calculus on the boundary. This is a bit different from the real case. 
\end{remark}

\textbf{Proof of Theorem \ref{thm.1}:} Part (a) is from Proposition \ref{prop.3.1}; Part (b) is from Proposition \ref{prop.3.2}; Part (c) is from Proposition \ref{prop.3.4} and Corollary \ref{cor.4.1}. 

\textbf{Proof of Theorem \ref{thm.2}:} Part (a) is from Proposition \ref{prop.4.1}; Part (b)  is from Proposition \ref{prop.4.2}.

\end{document}